\documentclass[11pt,reqno]{amsart}
\usepackage{amsmath,amsthm,amsfonts,amssymb,amscd,mathrsfs}
\usepackage{paralist}
\usepackage{graphicx}
\usepackage{mathrsfs}
\usepackage{subcaption}
\usepackage{footmisc}
\DeclareSymbolFontAlphabet{\mathrsfs}{rsfs}
\usepackage{bm}
\usepackage{color}
\usepackage{hyperref}
\usepackage[myheadings]{fullpage}

\theoremstyle{plain}
\newtheorem{theorem}{Theorem}[section]

\newtheorem{lemma}[theorem]{Lemma}

\newtheorem{remark}[theorem]{Remark}
\newtheorem{prop}[theorem]{Proposition}

% Alter some LaTeX defaults for better treatment of figures:
% See p.105 of "TeX Unbound" for suggested values.
% See pp. 199-200 of Lamport's "LaTeX" book for details.
%   General parameters, for ALL pages:
	% max fraction of floats at top
	% max fraction of floats at bottom
%   Parameters for TEXT pages (not float pages):
\setcounter{topnumber}{2}
\setcounter{bottomnumber}{2}
\setcounter{totalnumber}{4}     % 2 may work better
\setcounter{dbltopnumber}{2}    % for 2-column pages
	% fit big float above 2-col. text
	% allow minimal text w. figs
%   Parameters for FLOAT pages (not text pages):
	% require fuller float pages
% N.B.: floatpagefraction MUST be less than topfraction !!
	% require fuller float
% pages

\definecolor{azure}{rgb}{0, 0.5, 1}

\def\cX{{\mathcal X}}
\def\cY{{\mathcal Y}}

\def\cF{{\mathcal F}}

\def\E{\mathbb{E}}
\def\N{\mathbb{N}}
\def\P{\mathbb{P}}

\def\R{\mathbb{R}}

\def\I{\mathbb{I}}

\def\al{\alpha}

\def\ka{\kappa}
\def\la{\lambda}

\def\de{\delta}

\def\ga{\gamma}

\def\var{\varepsilon}
\def\bs{\boldsymbol}

\begin{document}
	\title[EBH and a probabilistic approach in characterizing potential landscape] {Essential barrier height and a probabilistic approach in characterizing potential landscape}
	
	\author{Yao Li}\address{Yao Li: Department of Mathematics and Statistics, University of Massachusetts Amherst, Amherst, MA, 01002, USA}\email{yaoli@math.umass.edu}
	
	\author{Molei Tao}\address{Molei Tao:
		School of Mathematics, Georgia Institute of Technology, Atlanta, GA 30332 USA}\email{mtao@gatech.edu}

	\author{Shirou Wang*}
	\address{Shirou Wang:  School of Mathematics, Jilin University, Changchun 130012, China, and Department of Mathematical and Statistical Sciences, University of Alberta, Edmonton, AB T6G 2G1, Canada
	} \email{shirou@jlu.edu.cn}
	
	\begin{flushleft}
		{\it Stochastic Processes Appl.},  to appear
		
		\vspace{5mm}
	\end{flushleft}
 
 \subjclass[2020]{Primary {37H10}; Secondary { 60H10, 60J22, 60J60}}
 
\keywords{Potential landscape, coupling method, overdamped Langevin dynamics, essential barrier height, non-convexity.}

\thanks{* Corresponding author.}

\thanks{Y. Li was partially supported by NSF DMS-2108628. 
M. Tao is grateful for partial support from NSF DMS-1847802, Cullen-Peck Scholarship, and GT-Emory AI.Humanity Award. S. Wang was partially supported by NSFC (12201244, 12271204), and a faculty development grant from Jilin University.}
 
\begin{abstract}
This paper proposes a probabilistic approach to investigate the shape of landscapes of multi-dimensional potential functions. 
Under a suitable coupling scheme, two copies of the overdamped Langevin dynamics associated with the potential function are coupled, and the coupling times are collected. Assuming a set of intuitive yet technically challenging  conditions on the coupling scheme, it is shown that the tail distributions of the coupling times exhibit qualitatively different dependencies on the noise magnitude for single-well  versus multi-well potential functions.  
More specifically, for convex single-well potentials, the negative tail exponent of the coupling time distribution is uniformly bounded away from zero  by the convexity parameter and is independent of  the noise magnitude. In contrast, for multi-well potentials, the negative tail exponent decreases exponentially as the noise vanishes,  with the decay rate governed by the {\it essential barrier height}, a quantity introduced in this paper to characterize the non-convex nature of the potential function. 
Numerical investigations are conducted for a variety of examples, including the Rosenbrock function, interacting particle systems, and loss functions arising in artificial neural networks. These examples not only illustrate the theoretical results in various contexts but also provide crucial numerical validation of the  conjectured assumptions, which are  essential to the theoretical analysis yet lie beyond the reach of standard technical tools. 
\end{abstract}
\maketitle

\section{Introduction}
The concept of potential functions is fundamental  in both continuous and discrete time dynamics. 
In continuous-time dynamics, it arises in both  conservative systems (e.g., Hamiltonian dynamics) and dissipative systems (e.g., gradient flow and damped mechanical systems). In discrete-time dynamics, it often corresponds to the objective function of an optimization algorithm or, more generally, to a variational inequality. In all these contexts, characterizing the landscape of the potential function, particularly  in high dimensions, is often crucial. For example, understanding the existence, locations, and connections of local minima, saddle points, and global minima of neural network training objectives is essential for comprehending both the training dynamics and the generalization capabilities of machine learning models (e.g., \cite{soltanolkotabi2018theoretical, allen2019learning, du2019gradient, cooper2021global, wang2021large, liu2022loss}). 

This paper proposes a probabilistic approach to understanding how local minima are globally connected in a potential landscape. 
Let $U$ be a  smooth function defined on  a regular  domain $D\subseteq\R^k (k\ge1)$ with finitely many local minima $x_1,\dots,x_L.$ 
Generically, denote by $(\varphi^t)_{t\ge0}$ the 
negative gradient flow of $U$. Then each $x_i$ is a stable equilibrium of $(\varphi^t)_{t\ge0}$ with the basins (of attraction) given by
	\[B_i=\{x\in D:\varphi^t(x)\to x_i\ \text{as} \ t\to\infty\}.\]

Call  $U$  a single-well potential  if it has only one local minimum $x_1$ (i.e., $L=1$) such that $D=B_1$, and call $U$ a  multi-well potential  if 
$L\ge 2$ and $D=\cup_{1\le i\le L}B_i$ up to a Lebesgue null set. A multi-well potential is, 
in particular, called a  double-well potential if $L=2.$ Throughout the paper, the following is always assumed for $U$:

\medskip

\noindent{\bf (U1)}
The potential function  $U\in C^3(D),$ where $D$ is open, convex and connected,  such that
$\lim\nolimits_{x\to\partial D}U(x)=\infty$, and if $D$ is unbounded, it further holds that 
\begin{eqnarray*}
	\lim\nolimits_{x\to\partial D}|\nabla U|=\infty,\ \lim\nolimits_{x\to\partial D}|\nabla U(x)|-2\Delta U(x)=\infty, 
\end{eqnarray*}
where $|\cdot|$ denotes the Euclidean norm.

\medskip

\noindent{\bf Remark}:
In the single-well setting, {\bf (U1)} ensures the existence of a global strong solution of \eqref{SDE1}. In the multi-well setting,  further assumptions on the finiteness and non-degeneracy of the saddle points and local minima, as stated in {\bf (U2)} or  {\bf (U3)}(iii), guarantee this \cite{Baur,metastability_book}.

\medskip

Our approach makes strong use of the coupling idea in probability. Given two stochastic processes $\bs X=\{\cX_t;t\ge0\}, \bs Y=\{\cY_t;t\ge0\}$ on $\R^k$, a {\it coupling} of $\bs X$ and $\bs Y$ is a stochastic process $\{(X_t,Y_t); t\ge0\}$ on $\R^{2k}$ satisfying the following:

 (i) For any $t>0,$ 
	$X_t$ ({\it resp.} $Y_t$) has the same law as $\cX_t$ ({\it resp.} $\cY_t$);
	
(ii) If $X_s =Y_s$ for certain $s>0,$ then $X_t =Y_t$ for all $t\ge s.$

\noindent 
The coupling time $\tau_c$ 
is defined to be the first meeting time between $X_t$ and $Y_t$, i.e., \begin{eqnarray}\label{coupling_time1}
		\tau_c=\inf\nolimits\{t\ge0:X_t=Y_t\}.
	\end{eqnarray}
 A coupling is said to be successful if $\tau_c<\infty$ almost surely. Henceforth, a coupling is denoted by $(X_t,Y_t)$ for simplicity and clarity.
 
 Coupling is a classical tool for comparing two  probability measures and, in the context of stochastic processes, provides a probabilistic approach to investigate the distributional convergence of the process \cite{reflection1986,chen1989, lindvall_book,eberle2019coupling,hairer_note}.  In this paper, the coupling method is utilized to characterize the landscape of a potential  function $U$.  The two stochastic processes being coupled are the overdamped Langevin dynamics,  which satisfy the  stochastic differential equation (SDE)
	\begin{eqnarray}\label{SDE1}
		dZ_t=-\nabla U(Z_t)dt+\var dB_t,
	\end{eqnarray}
	where $\{B_t;t\ge0\}$ is a $k$-dimensional Brownian motion and  $\var>0$ is the  noise magnitude.
Under effective coupling methods, the coupling time distribution for  Langevin dynamics
usually exhibits exponential tails  (e.g., \cite{eberle2019coupling, li-wang2020}), indicating  intuitive connections with the characteristics of the potential function $U.$

We will focus on how the exponential tails of  the coupling time distributions depend on the noise magnitude $\var$.
The main message is that, under a {\it suitable} coupling scheme,  this dependence exhibits both {quantitatively} and  {qualitatively} different behaviors between potential functions with only a single well and  those with multiple wells. 
More specifically, if denote
$r(\var)=-\limsup_{t\to\infty}\frac{1}{t}\log\P[\tau_c>t],$
then for a single-well potential $U$, $r(\var)$ is uniformly bounded away from zero,  independent of $\var$ (see Theorem \ref{thm:1}); whereas for a multi-well potential $U$, $r(\var)$
decreases exponentially with respect to $\var,$ leading to the emergence of a quantity called {\it essential barrier height}, which quantifies the level of non-convexity of the potential $U$ in a certain sense (see Theorem \ref{thm:2}-Theorem \ref{thm:5}).

Various coupling methods have been developed in  different contexts since the pioneer work of Doeblin \cite{doeblin1938}. 
In this paper, for the purpose of coupling efficiency, we use  a {\it mixture} of two particular coupling methods: reflection coupling and maximal coupling (see Section 2 for details on these two methods). Specifically, for a certain threshold distance $d>0,$  the coupling method between $X_t$ and $Y_t$ is switched between the reflection  and maximal coupling 
 in such a way that  $(X_t, Y_t)$ evolves according to the reflection ({\it resp.} maximal) coupling whenever $|X_t-Y_t|> d$  ({\it resp.} $|X_t-Y_t|\le d$) until a successful coupling is attained (i.e., $X_t=Y_t$ for some $t$). This coupling scheme is referred to as the {\it reflection-maximal coupling}.
It was developed in \cite{li-wang2020} to compute the geometric  convergence rate of stochastic dynamics, and a similar scheme is  utilized to compute the convergence rate for Markov processes \cite{eberle2019quantitative}.
 
How should the threshold $d$ be chosen? 
We note that the maximal coupling is defined in the discrete-time setting, specifically for the time-$h$ sampled chain of the SDE. The choice of $d$ should be chosen so that, if  $|X^h_{n-1}-Y^h_{n-1}|<d$, then the distributions of the time-$h$ sampled chains $X_{n}^h$ and $Y^h_{n}$ have sufficient overlap to ensure that the probability of successful coupling, $\P[X^h_{n}=Y^h_{n}]$, is of order $\mathcal{O}(1)$.
Lemma \ref{lem:max_coupling} shows that by taking $d=\mathcal O(\var\sqrt{h}),$ both the coupling probability and the expected distance between $X^h_n$ and $Y^h_n$ can be controlled suitably. Hereafter,  we refer to the scheme as the ``$h$-reflection-maximal coupling"
when emphasizing  the time step size $h$; otherwise, we simply refer to it as  the reflection-maximal coupling, typically assuming  a small $h$ without specifying its exact value.

Although the theoretical results established in this paper do not depend on the choice of discretization scheme, 
in numerical simulations, 
the Euler-Maruyama scheme is adopted for all numerical examples. This is because 
its probability density function at any given point can be explicitly computed, which is required for the implementation of the maximal coupling.
With additional effort to evaluate  the relevant densities, the reflection-maximal coupling  can also be adapted to other numerical schemes, such as the Milstein scheme. Since the primary goal of this paper is to demonstrate the effectiveness of the reflection-maximal coupling method 
in characterizing the potential landscape, the Euler-Maruyama scheme is used throughout the  numerical examples. 

The first main result of this paper concerns the single-well potential. Let $U$ be a single-well potential on a convex domain $D$. The function $U$ is said to be {\it strongly convex} (with constant $m_0>0$) if	\begin{eqnarray}\label{def:unif_convex}
		\langle\nabla U(x)-\nabla U(y), x-y\rangle\ge m_0|x-y|^2,\quad \forall x,y\in D,
	\end{eqnarray}
	where $\langle\cdot,\cdot\rangle$ denotes the standard inner product in $\R^k$.	
	The supremum of all positive values of $m_0$ satisfying \eqref{def:unif_convex} is called the convexity parameter of $U$. Henceforth, $m_0$  always denotes the convexity parameter. 
	
\begin{theorem}\label{thm:1}
Let $U$ be a single-well potential  satisfying {\bf (U1)} and strongly convex  with  constant $m_0>0.$ Given any $\delta>0$, there exists $h_0>0$ such that for any $h\in(0,h_0)$, if $(X_t,Y_t)$ is an  $h$-reflection-maximal coupling  of two solutions of \eqref{SDE1}  satisfying $\E[|X_0-Y_0|]<\infty,$ then for 
any $\varepsilon > 0$, it  holds that 
\begin{eqnarray*}
\limsup_{t\to\infty}\dfrac{1}{t}	\log\P[\tau_c>t]\le -m_0+\delta.
\end{eqnarray*}	
\end{theorem}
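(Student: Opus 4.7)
The plan is to dominate the multi-dimensional coupled dynamics by a one-dimensional Ornstein--Uhlenbeck (OU) process whose first-passage time to $0$ carries the correct exponential rate, and then adjust for the discretization and the maximal-coupling step.

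First, in continuous time, a direct It\^o computation for the reflection coupling (cf.\ Lindvall--Rogers) gives, for $r_t = |\cX_t - \cY_t|$,
\begin{equation*}
dr_t = -\tfrac{\langle \cX_t - \cY_t,\,\nabla U(\cX_t)-\nabla U(\cY_t)\rangle}{r_t}\,dt + 2\var\,dW_t,
\end{equation*}
with $W_t$ a scalar Brownian motion. Strong convexity \eqref{def:unif_convex} forces the drift to be at most $-m_0 r_t$. Introducing $\rho_t$ with $d\rho_t = -m_0 \rho_t\,dt + 2\var\,dW_t$ driven by the \emph{same} $W_t$ and with $\rho_0 = r_0$, the noise cancels pathwise in $\rho_t - r_t$, so Gronwall gives $r_t \le \rho_t$ up to $\tau_c$. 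Hence $\tau_c \le \tau_0^{\rho} := \inf\{t: \rho_t = 0\}$, and the classical OU first-passage asymptotic yields $\limsup_{t\to\infty}(1/t)\log\P[\tau_0^{\rho} > t] = -m_0$, which establishes the continuous-time version of the theorem.

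Second, to pass to the $h$-reflection-maximal coupling two modifications are needed. The Euler--Maruyama discretization perturbs the per-step estimate of $\E[r_{n+1}^2|\mathcal F_n]$ relative to $r_n^2$ by a factor $1 - 2m_0 h + O(h^2)$ coming from a direct Taylor expansion using \eqref{def:unif_convex}; choosing $h_0$ small in terms of $\delta$ yields an effective contraction rate at least $m_0 - \delta/2$ and preserves an OU-type comparison with a discrete dominating chain $\tilde\rho_n \ge r_n$. Moreover, because the Euler increments are Gaussian, genuine merger in finite time is produced by the maximal-coupling step, triggered whenever $r_n \le d = \mathcal O(\var\sqrt h)$; by Lemma \ref{lem:max_coupling} every such step merges with probability at least some constant $p>0$ that is independent of $\var$. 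So after $\tilde\rho_n$ first enters $[0,d]$, merger occurs within a geometric number of attempts whose per-unit-time rate $-h^{-1}\log(1-p)$ dominates $m_0$ as soon as $h$ is small.

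The main obstacle I expect is this last combination: cleanly tying the discrete OU first-passage tail to the geometric merger tail while preserving $m_0$ as the leading exponent of $\P[\tau_c > nh]$. A natural realization is a two-phase renewal decomposition $\tau_c = T_1 + T_2$, where $T_1$ is the first entry of $\tilde\rho_n$ into $[0,d]$ (with tail rate $m_0 - O(h)$ by the discrete analogue of the OU comparison of Step~2) and $T_2$ is the subsequent merger time (with a $\var$-independent rate that dominates $m_0$ once $h$ is small). After choosing $h_0$ small enough to absorb every $O(h)$ correction into $\delta/2$, the claimed bound $\limsup_{t\to\infty}(1/t)\log\P[\tau_c > t] \le -m_0 + \delta$ follows directly.
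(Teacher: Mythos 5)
Your continuous-time step is sound and is exactly the paper's Proposition \ref{prop:sup_mart}: the radial process of the reflection coupling is dominated by an OU process with rate $m_0$, giving $\P[\tau_c>t]\lesssim \frac{|x_0-y_0|}{\var}e^{-m_0t}$ for the pure reflection coupling. The genuine gap is in how you treat the $h$-reflection-maximal scheme after the distance first reaches the threshold $d=\mathcal O(\var\sqrt h)$. Your two-phase decomposition $\tau_c=T_1+T_2$ with ``$T_2$ = a geometric number of merger attempts at per-unit-time rate $-h^{-1}\log(1-p)$'' implicitly assumes that a \emph{failed} maximal-coupling step leaves the distance inside $[0,d]$, so that attempts repeat every $h$ units of time. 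That is not guaranteed: by the maximal-coupling construction, after a failed attempt the pair is drawn from the residual distributions and $|\cX-\cY|$ may well exceed $d$ (Lemma \ref{lem:max_coupling}(ii) only bounds its conditional \emph{expectation} by $C_k\var\sqrt h$, with $d=2\var\sqrt h$), at which point the scheme reverts to reflection coupling and the return time to the threshold again has an exponential tail whose rate is only $m_0$. Consequently $T_2$ does \emph{not} have a $\var$-independent rate dominating $m_0$; its tail rate is itself capped at $m_0$, and what saves the theorem is not a faster second phase but the \emph{small prefactor} $\mathcal O(\sqrt h)$ attached to each sub-threshold excursion (the paper's Lemma \ref{lem:2}, $\P[\tau_h>t]\le C_0\sqrt h\,e^{-m_0t}$).

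This is precisely the part your proposal leaves out: one must iterate over an unbounded number of cycles ``fall to the threshold, attempt merger, possibly fail and excurse above the threshold again,'' i.e.\ write $\tau_c=\tau_h^{\eta}$ with $\eta$ geometrically dominated (failure probability $\gamma<1$ per cycle, Proposition \ref{prop:1}), and then control $\E[\la^{\tau_h^\eta}]$ by a Nummelin-type generating-function argument: combine the per-cycle bound $\E[\la^{\tau_h}]\le g(\la;C_0\sqrt h,e^{m_0})$ with a H\"older splitting so that the series converges whenever $\gamma^{1-p}g(\la^{1/p};C_0\sqrt h,e^{m_0})^p<1$, which for small $h$ (so that $g\to\la$) admits every $\la<e^{m_0-\delta}$ (the paper's Theorem \ref{thm:3}). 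Without this compounding step, knowing that $T_1$ has rate $m_0-O(h)$ and that each single merger attempt succeeds with probability $\ge 1-\gamma$ does not yield the claimed rate for $\P[\tau_c>t]$, because the random times between attempts are unbounded with rate-$m_0$ tails. A secondary, more minor point: the theorem concerns couplings of the true SDE solutions, so the Gaussian maximal-coupling estimates for the Euler step must be transferred to the exact transition kernels (total-variation closeness, \eqref{limit:1}); the paper does this inside Lemma \ref{lem:max_coupling}, whereas your moment-contraction discretization argument leaves this bridge implicit.
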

\medskip

\noindent{\bf Remark}:
Theorem \ref{thm:1} provides only an upper bound for the coupling time in the single-well case, in contrast to the asymptotic characterizations established for the  multi-well case in Theorems \ref{thm:2} and \ref{thm:5} below. Deriving a lower bound would require identifying a mechanism by which two coupled trajectories fail to meet within a sufficiently long time. In the absence of energy barriers, as in the single-well case, such a mechanism is not straightforward. Even under the simplifying assumption that the potential is quadratic, estimating the probability of near-coupling without success involves estimates on the first hitting times of the Ornstein–Uhlenbeck process, for which explicit formulas are generally not  available \cite{lipton2018}. In practice, the exponential tail of the coupling time distribution for the single-well case is expected to be governed by the smallest eigenvalue of the Hessian at the global minimum; see  Section 5.2.

\medskip
	
When $U$ has multiple wells,  a crucial quantity  is the  least barrier height of any continuous path connecting  two local minima of $U$.  More specifically, given  two subsets $A,B\subseteq D,$ the communication height between $A$ and $B$ is defined as
\begin{eqnarray}\label{tilde U}
\Phi(A,B)=\inf_{\substack{{\phi\in C([0,1],D)},\\ \phi(0)\in A,\ \phi(1)\in B}}\sup\nolimits_{t\in[0,1]}U(\phi(t)),
\end{eqnarray}
where the infimum is taken over all continuous paths in $D.$ It is straightforward to observe that $\Phi(A,B)=\Phi(B,A).$

For a double-well potential $U$ with  two local minima $x_1,x_2$, define the {\it essential barrier height} as
\begin{eqnarray}\label{barrier_height}
		H_U=\min\big\{\Phi(x_1,x_2)-U(x_1), \Phi(x_1,x_2)-U(x_2)\big\}, 
	\end{eqnarray}
which represents the lower of the two barrier heights that must be crossed when transitioning from one local minimum to the other.
In the double-well setting, the potential function is assumed to satisfy the following generic conditions.

	\medskip
	
	\noindent{\bf (U2)}
	Let  $U:D\to\R$ 
	be a double-well potential function  satisfying  {\bf (U1)}  with  two local minima $x_1$ and $x_2.$ The following hold:
	
	\medskip
	{(i)} The communication height between $x_1$ and $x_2$ is attained at a unique saddle point $z^*(x_1,x_2),$ i.e., 
	\begin{eqnarray*}
		U(z^*(x_1,x_2))=\Phi(x_1,x_2); 
	\end{eqnarray*}
	
	\medskip

	{(ii)} $U$ is non-degenerate (i.e., the Hessian of $U$  has only non-zero eigenvalues)  at the two local minima $x_1,x_2,$ and at the saddle point $z^*(x_1,x_2).$ 

	\medskip

 In the multi-well setting, in addition to assumptions on the potential function, several key properties of the coupling scheme are also required; see {\bf (H1)}-{\bf (H3)} in Section 4. 
These property assumptions, while technical in form, are supported by intuitive reasoning and are numerically validated in Section 5.

When multiple  wells are present,
the coupling process is assumed to be initially related to all  basins, ensuring that all  typical scenarios are considered. More specifically, a probability measure $\mu$ on $D\times D$ is said to be  fully supported (with respect to  all local minima) if for any $\delta>0$, 
		\begin{eqnarray*}
			\mu(B_\delta(x_i)\times B_\delta(x_j))>0,\quad i,j=1,\dots,L,
		\end{eqnarray*}
	where $B_\delta(x)$ denotes the ball centered at $x$ with radius $\delta.$
A coupling  $(X,Y)$ is said to be {fully supported} if its distribution is fully supported. Analogously, a probability measure $\mu$ on $D$ is said to be fully supported if for any $\delta>0,$ 
\begin{eqnarray*}
	 	\mu(B_\delta(x_i))>0,\quad i=1,\dots,L.
 \end{eqnarray*} 
A random variable $X$ is said to be fully supported if its distribution is fully supported. Note that any probability measure  equivalent to the Lebesgue measure is fully supported. 

Throughout this paper, the notation $x\lesssim y$  ({\it resp.} $x \gtrsim y$) indicates that $x$ is bounded from above ({\it resp.} below) by a constant, which is independent of $t$ and $\var$, multiplied by $y$. The notation $x\simeq y$ means that both $x\lesssim y$ and $y \gtrsim x$ hold. 	
	\begin{theorem}\label{thm:2}
		Let $U$ be a double-well potential  satisfying {\bf (U2)}, and   $(X_t,Y_t)$  be a 
		coupling of two solutions of  \eqref{SDE1} 		
		such that $(X_0,Y_0)$ is  fully supported. Then, if the coupling $(X_t,Y_t)$ satisfies  {\bf (H1)}-{\bf (H2)},
		for any $\var>0$ sufficiently small, it holds that 
		\begin{eqnarray*}
\limsup\limits_{t\to\infty}\dfrac{1}{t}
			\log\P[\tau_c>t]\simeq -C_{\var}e^{-2H_U/{\var^2}},
		\end{eqnarray*} 
		where $H_U$ is defined in \eqref{barrier_height}, and $C_\var>0$ is a constant such that the limit
  $\lim_{\var\to0}C_\var$ exists and depends only on $U.$
\end{theorem}

In the general setting of multi-well potentials, in addition to the degeneracy of the  critical points and the uniqueness of the saddle, as specified in  {\bf (U2)}, the potential function  $U$ is also assumed to exhibit distinct  potential values and depths corresponding to the different  local minima.
	
	\medskip
	\noindent{\bf (U3)} Let $U:D\to\R$ be a multi-well potential function satisfying {\bf (U1)} with  local minima $x_1,\dots,x_L.$ The following hold: 
	
	\medskip
{(i)}  $U$ has different potential values at the different local minima. In particular, $U$ admits a unique global minimum, denoted by $x_1$;
	\medskip  	 
	
{(ii)} The different basins of potential $U$ admit different depths. More precisely,  there exists some $\de>0$ such that the $L$ local minima of $U$ can be labeled in such a way that
\begin{eqnarray}\label{ordering}
\Phi(x_i,\mathcal M_{i-1})-U(x_i)\le \min\nolimits_{\ell<i}\{\Phi(x_\ell,\mathcal M_{i}\backslash x_\ell)-U(x_\ell)\}-\de,\quad i=1,\dots,L, 
\end{eqnarray}
where $\mathcal M_0=D^c,$ $\mathcal M_i=\{x_1,\dots,x_{i}\},$ $ i=1,\dots,L;$
	\medskip
	
{(iii)} Let $\mathcal M_{i}$ be  as in (ii). Then for each $i\in\{1,\dots,L\},$   
	the communication height between $x_i$ and $\mathcal M_{i-1}$  is reached at the unique saddle point $z^*(x_i,\mathcal M_{i-1}),$ i.e., 
	\begin{eqnarray*}
		U(z^*(x_i,\mathcal M_{i-1}))=\Phi(x_i,\mathcal M_{i-1}).
	\end{eqnarray*}
Moreover, $U$ is non-degenerate at all the local minima $x_1,\ldots,x_L,$ and at the associated saddle points $z^*(x_i,\mathcal M_{i-1}), 1\le i\le L.$  
	
\bigskip	
	
Note that {\bf (U3)}(iii) reduces to {\bf (U2)} when $L=2$. We refer to Figure \ref{FigU} for an example of the potential function $U(x)$ in one dimension, which illustrates the local minima, the communication heights, the essential barrier height, the relative depths $\Phi(x_i,\mathcal M_{i}\backslash x_i)-U(x_i)$, and their relationships. It should be noted that the essential barrier height $H_U$ is {\it not} the highest communication height among the local minima, as indicated by $H_1$.

Condition
{\bf (U3)} comes from a nice work on metastability  \cite{metastability2004,metastability2005}, in which a sharp  estimate of the first hitting time from a local minimum to an {\it appropriate set} is rigorously proved. We will extensively apply this result to derive an estimate of the first hitting time to the basin of the global minimum (see Lemma \ref{lem:expon_tail_multi_well}), naturally introducing the notion of essential barrier height defined in \eqref{barrier_height_multi-well} below. This ultimately yields the coupling time estimate  for the multi-well case. 
	
	\begin{figure}
		\centering
		\includegraphics[width=0.8\linewidth]{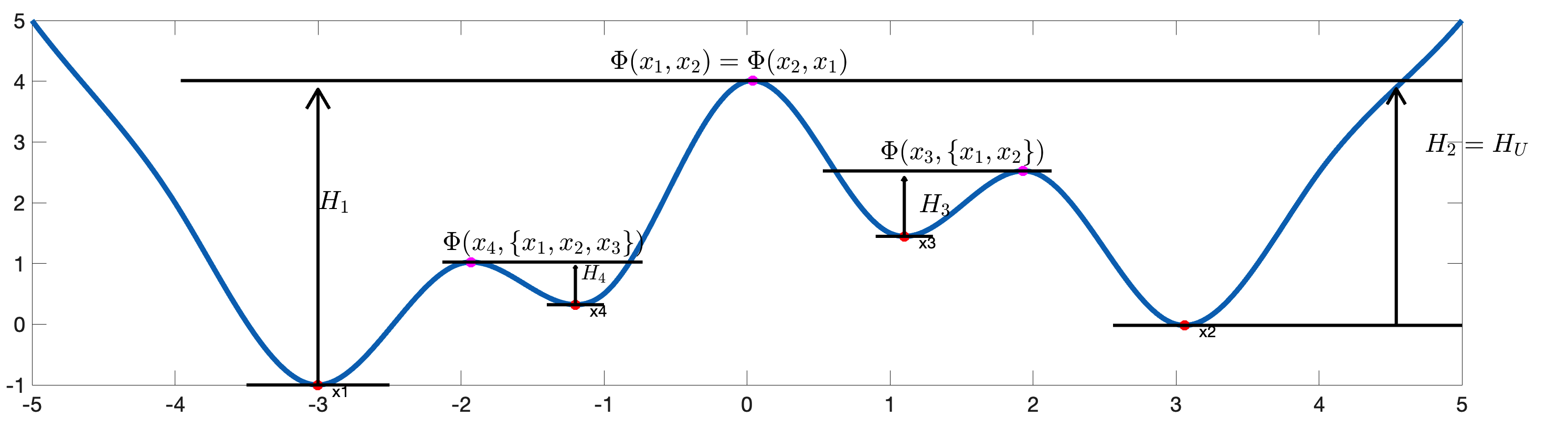}
		\caption{Example of 1D potential function, local minima, communication height, and essential barrier height $H_U$. Four relative depths $H_1 = \Phi(x_1, x_2) - U(x_1)$, $H_2= \Phi(x_2,  x_1) - U(x_2)$, $H_3 = \Phi(x_3, \{ x_1, x_2\}) - U(x_3)$, and $H_4 = \Phi(x_4, \{ x_1, x_2, x_3\}) - U(x_4)$ are demonstrated. Note that $H_2>H_3>H_4.$ In this example $H_U=H_2<H_1.$}
		\label{FigU}
	\end{figure}	
	
We now define the {\it essential barrier height} in the  general context. 
Let $U$ be a multi-well potential  satisfying {\bf (U3)}, with  $x_1$ denoting the (unique) global minimum.
The essential barrier height of $U$ is defined as
\begin{eqnarray}\label{barrier_height_multi-well}
H_U=\max\nolimits_{2\le i\le L}\big\{\Phi(x_i,x_1)-U(x_i)\big\}.
\end{eqnarray}
Note that when $L=2,$ \eqref{barrier_height_multi-well} reduces to \eqref{barrier_height}, so the definitions of essential barrier height for double- and multi-well potentials coincide. 
	
We note that the essential barrier height defined in \eqref{barrier_height_multi-well} differs from the usual notion of barrier height in the literature. The latter is  a local characterization of the potential landscape by focusing  only on the relevant barriers that must be crossed when transitioning from   one local minimum to another. In contrast, the essential barrier height considered in this paper is a {\it global} characterization, as it captures  the greatest height of the barriers that must be passed by any continuous path going towards the global minimum from any of the local minima. 
An equivalent characterization of $H_U$ will be given in Section 2.3.

	\begin{theorem}\label{thm:5}
		Let $U$ be a  multi-well potential  
		satisfying  {\bf (U3)}, and  let $(X_t,Y_t)$  be a coupling of two solutions of \eqref{SDE1} such that  $(X_0,Y_0)$ is fully supported.  Then, if  the coupling $(X_t,Y_t)$ satisfies {\bf (H1)}-{\bf (H3)}, for any $\var>0$ sufficiently small, it holds that 
\begin{eqnarray*}
\limsup_{t\to\infty}\dfrac{1}{t}
			\log\P[\tau_c>t]\simeq C_\var e^{-2H_U/{\var^2}},
		\end{eqnarray*}
		where $H_U$ is given in \eqref{barrier_height_multi-well}, and $C_\var>0$ is a constant such that the limit
  $\lim_{\var\to0}C_\var$ exists and depends only on $U.$
	\end{theorem}

The intuitive ideas underlying Theorem \ref{thm:2} and Theorem \ref{thm:5}, which relate the coupling times to the essential barrier height $H_U$, are as follows. In the double-well case, the typical scenario is that both processes enter the basin associated with the global minimum and be coupled within that basin, as they overcome a lower barrier when transitioning from the local minimum to the global one than in the reverse direction. This intuition analogously extends to  multi-well cases: when the two coupled processes start in different basins, 
the minimal height of the barriers they must overcome to reach the same basin is always no greater than $H_U$. Specifically, it is 
no greater than the lower barrier when transitioning to the basin of the global minimum. Such height can be attained when the initial basins of  the two processes 
are sufficiently ``distant'' from each other (see Section \ref{sec:hitting time}).
 
The essential barrier height, in a certain sense, quantifies the  ``global non-convexity" of multi-well potentials, which is of crucial importance in non-convex optimization problems arising in various fields. In Section 5,  we propose a numerical algorithm to compute the essential barrier height, based on the linear extrapolation of the exponential tails of coupling time distributions. The computed values are validated for both a one-dimensional double-well potential and a  multi-dimensional interacting particle system, with numerical results shown to closely match the theoretical values.
We further apply this algorithm to detect the loss landscapes of artificial neural networks.  In a two-layer neural network model, it is shown that the loss functions of large artificial neural networks (over-parameterized) have lower essential barrier heights than that of small ones (under-parameterized).
This is largely consistent with observations in the machine learning community, suggesting a promising criterion for training artificial  
neural networks based on the essential barrier height of the training loss function. 
	
This paper is organized as follows. Section 2 presents basic facts and results that will be used in the subsequent sections,  including estimates for reflection and maximal couplings,  first hitting times of Langevin dynamics under multi-well potentials, as well as probability generating functions. 
 Section 3 studies the case of the single-well potential and proves Theorem \ref{thm:1}. Section 4 investigates both double-well  and multi-well potentials, and proves Theorem \ref{thm:2} and Theorem \ref{thm:5}.  Section 5  explores various examples of single- and multi-well potentials,  in which both the theoretical findings and the assumptions on the coupling scheme  are numerically verified.

	\bigskip
	
	\section{Preliminary}

	This section prepares  key preliminary results that will be used in the rest of the paper.

	\subsection{Reflection coupling  and single-well potential}
	%Let diffusion processes $\bs X$ and $\bs Y$ defined by the
	
	Consider two stochastic processes $\bs X,\bs Y$ satisfying  the  following stochastic differential equation
	\begin{eqnarray}\label{SDE2}
		dZ_t=g(Z_t)dt+\var dB_t, \quad Z_t\in\R^k 
	\end{eqnarray}
	with initial conditions $\mu$ and $\nu$ respectively.   %Here, 	 $\{B_t;t\ge0\}$ is a $k$-dimensional Brownian motion, 
	Assume that $g:\R^k\to\R^k$ is Lipschitz continuous and satisfies additional conditions,  ensuring the unique existence of non-explosive strong solutions of \eqref{SDE2}
from any initial condition.

	A {\it reflection coupling} of $\bs X$ and $\bs Y$ is a stochastic process $\{(X_t,Y_t); t\ge0\}$ taking values in $\R^{2k}$ such that %$(X_0,Y_0)\sim\eta$ where $\eta$ is a coupling of 
	$X_0\sim\mu, Y_0\sim\nu,$ and 
	\begin{eqnarray}\label{reflection_coupling}
		&&dX_t=g(X_t)dt+\var dB_t,\nonumber\\
		\quad 
		&&dY_t=g(Y_t)dt+\var P_tdB_t,\quad 0<t<\tau_c; \quad 
		Y_t=X_t,\quad t\ge \tau_c, 
	\end{eqnarray}
	where $P_t=I_k-2e_te^\top_t$ is the  orthogonal matrix in which   $e_t=(X_t-Y_t)/|X_t-Y_t|$,  and $\tau_c$ is the coupling time defined in \eqref{coupling_time1}.	
	
	The reflection coupling, as its name suggests, is to make the noise terms in $X_t$ and $Y_t$ the mirror reflection 
	of each other with respect to the middle hyperplane between $X_t$ and $Y_t$ \cite{reflection1986}. It is a  particularly efficient coupling method in high-dimension, achieved by  only keeping the noise along the vertical direction (which is one-dimensional) of the hyperplane  with noise in other directions being cancelled out. %so that the coupling effect in making $X_t$ and $Y_t$  towards each other are maximized.  
	%	while cancel the ``parallel drift" that do not actually contribute to the coupling ; s
	%see \cite{Coupling_of_Multidimensional_Diffusions_by_Reflect} for more details.

The following proposition states that under the method of reflection coupling, the distributions of coupling time of the overdamped Langevin dynamics along a strongly convex  single-well potential have exponential tails,  bounding away from zero by the convexity parameter. 
\begin{prop}\label{prop:sup_mart} 
Let $U$ be a single-well potential satisfying {\bf (U1)}. Assume that $U$ is strongly convex with  constant $m_0>0$.  Then given any $t_0>0$,  there exists  $c_0>0$ 
such that, if  $(X_t,Y_t)$ is a reflection coupling of two solutions of  \eqref{SDE1} with initial conditions $X_0=x_0, Y_0=y_0,$ for any $\var>0$, it holds that
\begin{eqnarray*}
\P[\tau_c>t]\le c_0\big({|x_0-y_0|}/{2\var}\big) e^{-m_0t},\quad\forall t \ge t_0.
\end{eqnarray*}
\end{prop}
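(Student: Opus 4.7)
The plan is to reduce the analysis to a one-dimensional diffusion for the distance $r_t := |X_t-Y_t|$, compare it with an explicit Ornstein--Uhlenbeck process, and then convert the hitting-time estimate into a reflection-principle calculation via a deterministic time change.

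First I would set $Z_t := X_t-Y_t$ and $e_t := Z_t/|Z_t|$. The reflection coupling \eqref{reflection_coupling} produces $dZ_t = -(\nabla U(X_t)-\nabla U(Y_t))\,dt + 2\var e_t\,dW_t$, where $W_t := \int_0^t e_s^\top\,dB_s$ is a one-dimensional Brownian motion by L\'evy's characterization. A direct application of It\^o's formula to $r_t$, in which the correction from the concavity of $\sqrt{\cdot}$ exactly cancels the trace contribution $4\var^2\,dt$ in $d|Z_t|^2$, yields for $t<\tau_c$
\begin{equation*}
dr_t \;=\; -\frac{\langle Z_t,\,\nabla U(X_t)-\nabla U(Y_t)\rangle}{r_t}\,dt + 2\var\,dW_t \;\le\; -m_0 r_t\,dt + 2\var\,dW_t,
\end{equation*}
where the inequality is the strong convexity \eqref{def:unif_convex}.

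Next I would drive the linear SDE $d\xi_t = -m_0\xi_t\,dt + 2\var\,dW_t$ with the \emph{same} Brownian motion $W_t$ and initial value $\xi_0 = |x_0-y_0|$. A pathwise Gr\"onwall argument applied to $\xi_t-r_t$ (whose stochastic parts cancel) gives $r_t\le\xi_t$ on $[0,\tau_c)$, hence $\tau_c\le\tau_\xi := \inf\{t\ge 0 : \xi_t = 0\}$, so it suffices to bound $\P[\tau_\xi>t]$. Solving $\xi_t = e^{-m_0 t}\bigl(|x_0-y_0| + 2\var M_t\bigr)$ with $M_t = \int_0^t e^{m_0 s}\,dW_s$, the Dambis--Dubins--Schwarz theorem represents $M_t = \tilde B_{T(t)}$ for a standard Brownian motion $\tilde B$ and the deterministic time change $T(t) = (e^{2m_0 t}-1)/(2m_0)$, so $\{\tau_\xi>t\}$ coincides with $\{\inf_{s\le T(t)} \tilde B_s > -|x_0-y_0|/(2\var)\}$.

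The reflection principle then gives $\P[\inf_{s\le T}\tilde B_s > -a] = 2\Phi(a/\sqrt{T}) - 1$, and the elementary bound $2\Phi(x)-1\le x\sqrt{2/\pi}$ produces
\begin{equation*}
\P[\tau_\xi>t] \;\le\; \frac{|x_0-y_0|}{2\var}\sqrt{\frac{2}{\pi T(t)}} \;=\; \frac{|x_0-y_0|}{2\var}\cdot\sqrt{\frac{4m_0/\pi}{1-e^{-2m_0 t}}}\cdot e^{-m_0 t},
\end{equation*}
which has the desired form $\frac{c_0|x_0-y_0|}{2\var}e^{-m_0 t}$ once the prefactor is bounded.

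The main technical point will be the bookkeeping in the first step: verifying that the It\^o correction from $r_t = \sqrt{|Z_t|^2}$ exactly cancels the Bessel-type drift $2\var^2/r_t\,dt$, so that $r_t$ actually obeys a 1D SDE with \emph{constant} noise coefficient $2\var$ (without this cancellation the comparison with a linear OU process would fail). A secondary issue is the short-time regime where the factor $(1-e^{-2m_0 t})^{-1/2}$ diverges; this is handled trivially by the bound $\P[\tau_c>t]\le 1$ on any compact initial interval $[0,t_0]$, which only enlarges the constant $c_0$ without affecting the exponential rate $m_0$.
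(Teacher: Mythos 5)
Your argument follows the same skeleton as the paper's proof: the reflection coupling turns the distance $r_t=|X_t-Y_t|$ into a one-dimensional diffusion with constant noise coefficient $2\var$ (your It\^o computation, with the exact cancellation of the Bessel-type correction, is precisely the content of the paper's equation \eqref{eq:2}), and strong convexity lets you dominate $r_t$ pathwise by the linear OU process, so that $\tau_c\le\tau_\xi$. Where you genuinely diverge is the last step: the paper invokes the closed-form first-passage density of the standard OU process from \cite{correction2000} and integrates it, whereas you solve the linear SDE explicitly, apply Dambis--Dubins--Schwarz with the deterministic clock $T(t)=(e^{2m_0t}-1)/(2m_0)$, and finish with the reflection principle and the elementary Gaussian bound. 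Your route is more elementary and self-contained, and it produces the explicit estimate $\P[\tau_\xi>t]\le \frac{|x_0-y_0|}{2\var}\bigl(4m_0/\pi\bigr)^{1/2}\bigl(1-e^{-2m_0t}\bigr)^{-1/2}e^{-m_0t}$, valid for every $t>0$, which is at least as strong as what the paper's computation delivers.

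The one place your write-up slips is the final patch for small $t$: the trivial bound $\P[\tau_c>t]\le 1$ does not let you absorb the divergent factor $\bigl(1-e^{-2m_0t}\bigr)^{-1/2}$ into $c_0$, because $c_0$ must be independent of $x_0,y_0,\var$, while the target prefactor $c_0|x_0-y_0|/(2\var)$ can be far below $1$. Indeed, when $|x_0-y_0|\ll\var$ the survival probability at small $t$ is of order $\frac{|x_0-y_0|}{2\var}t^{-1/2}$, so no universal constant makes the stated inequality hold uniformly down to $t=0$; your pre-patch estimate is the correct statement in that regime. This is not a defect relative to the paper, whose own proof has the same blemish (bounding $\int_{m_0t}^\infty e^{s/2}\sinh(s)^{-3/2}\,ds$ by a constant multiple of $e^{-m_0t}$ fails as $t\to0$, since the left-hand side grows like $(m_0t)^{-1/2}$); in all subsequent uses only the large-$t$ exponential tail matters, and for $t$ bounded away from zero both your bound and the paper's give the claimed inequality with rate $m_0$.
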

	\begin{proof}
		Denote  $R_t=|X_t-Y_t|/2\var.$
		It is not hard to see that $\{R_t;t\ge0\}$ is a one-dimensional stochastic process satisfying
		\begin{eqnarray}\label{eq:2}
			dR_t=-R_t^{-1}\langle\nabla U(X_t)-\nabla U(Y_t),X_t-Y_t\rangle dt+2\var d\bar B_t,\ 0\le t<\tau_c,%:=\beta_tdt+2dW_t
		\end{eqnarray}
		where $\{\bar B_t;t\ge0\}$ is a one-dimensional Brownian motion. 
		
		By the strong convexity of $U$, the drift term in \eqref{eq:2} is upper bounded by  $-m_0R_t.$ Thus, for $t\in[0,\tau_c),$ $R_t$ is always  bounded by  the following one-dimensional Ornstein-Uhlenbeck process $\{S_t;t\ge0\}$ 
		\begin{eqnarray}\label{SDE:compare}
			dS_t=-m_0S_tdt+ d\bar  B_t,\quad S_0=|x_0-y_0|/2\var.
		\end{eqnarray}
Let $\tau_0=\inf\{t\ge0:S_t=0\}.$
It is now sufficient  to estimate $\P[\tau_0>t].$  		
%	Under the change of variables
%	that $t$ replaced by $m_0 t$ and $S$ replaced by $\sqrt{m_0}S,$  
%		\eqref{SDE:compare} becomes a 
%		standard O-U process 
%		\begin{eqnarray}\label{SDE:compare1}		dS_{t}=-S_{t}dt+d\bar B_{t},\quad S_0=\sqrt{m_0}|x_0-y_0|/2\var.
%		\end{eqnarray}

By Proposition 1  in \cite{correction2000} (see also \cite{lipton2018}), 
the probability density function of $\tau_0$ has an analytic expression as follows
\begin{eqnarray}\label{pdf}
p(t)= \dfrac{S_0}{\sqrt{2\pi}}\Big(\dfrac{m_0}{\sinh(t)}\Big)^{3/2}\exp\Big\{\frac{m_0(t-S_0^2)}{2}-\frac{m_0S_0^2}{2}\coth(m_0t)\Big\},\quad t\ge0.
		\end{eqnarray}
Note that
	\begin{eqnarray*}
p(t)\le \dfrac{S_0}{\sqrt{2\pi}}\Big(\dfrac{m_0}{\sinh(t)}\Big)^{3/2}\exp\Big\{\frac{m_0(t-S_0^2)}{2}\Big\}=c_0S_0e^{\frac{m_0t}{2}}\big/(e^{m_0t}-e^{-m_0t})^{3/2}
\end{eqnarray*}
where $c_0$ is a constant independent of $t,$ $x_0,y_0$ and $\var$.
Thus, we have 
		\begin{eqnarray*}
		\P[\tau_0\ge t]
			=\int_{t}^\infty p(s)ds
			\le \dfrac{c_0|x_0-y_0|}{2\var}\int_{t}^\infty\dfrac{e^{2m_0s}}{(e^{2m_0s}-1)^{3/2}}ds.
			%&\le&c_0\dfrac{c_0|x_0-y_0|}{2\var}\int_{t}^\infty \dfrac{e^{2m_0s}}{(e^{2m_0s})^{3/2}}ds=\dfrac{c_0|x_0-y_0|}{2\var}e^{-m_0t}
		\end{eqnarray*}

Note that for any $p\in(0,1)$,  \[e^{2m_0s}-1\ge pe^{2m_0s},\quad\forall s\ge|\ln(1-p)|/2m_0.\] Thus, for any given $t_0>0,$ by letting 
$p\in(0,1)$ be such that $p\ge 1-e^{-2m_0t_0}$ and suitably enlarging the constant  $c_0$,
one obtains  
\begin{eqnarray*}
\P[\tau_0\ge t]\le\dfrac{c_0|x_0-y_0|}{2\var}\int_{t}^\infty\dfrac{e^{2m_0s}}{(e^{2m_0s})^{3/2}}ds%=\dfrac{c_0|x_0-y_0|}{2\var}\int_{t}^\infty e^{-m_0s}ds.
\le c_0\big({|x_0-y_0|}/{2\var}\big)e^{-m_0t},\quad\forall t\ge t_0,
\end{eqnarray*}
where $c_0$ is independent of $t,$ $x_0,y_0$ and $\var$.\end{proof}

	%\begin{eqnarray*}
	%\beta_t&\le&e^{\tilde m_0t}|X_t-Y_t|^{\al}(\tilde m_0-\al m_0)
	%\le0,\quad\forall t\ge0.
	%\end{eqnarray*}
	
	%Denote  
	%\begin{eqnarray*}
	%B_t=\int_0^t\beta_sds,\quad  M_t=\int_0^t2\var\al e^{\tilde m_0s}|X_s-Y_s|^{\al-1}dW_s.
	%\end{eqnarray*}
	%Then $\{A_t; t\ge0\}$ is a decreasing process and $M_t$ is a martingale, 
	%Thus, by Doob–Meyer decomposition theorem, 
	%$S_t=B_t+M_t, t\ge0,$
	%is a super-martingale. 

\subsection{Maximal coupling and   estimations}
Let $\mu_1$ and $\mu_2$  be two probability  distributions on $\R^k.$ Call 
$(X,Y)$ a coupling of  $\mu_1$ and $\mu_2$ if	$X\sim\mu_1, Y\sim\mu_2.$
	By the well-known coupling inequality (see, for instance, Lemma 3.6 in \cite{aldous1983random}), 
	\begin{eqnarray}\label{coupling_ineq}
		\text{TV}(\mu_1,\mu_2)\le2\P[X\neq Y],
	\end{eqnarray}
	where $\text{TV}(\mu_1,\mu_2):=2\sup_{A\subseteq\R^k}|\mu_1(A)-\mu_2(A)|$ denotes the total variation distance between probability measures on $\R^k.$ 
	A coupling $(X,Y)$ is said to be a {\it maximal coupling} if the equality in \eqref{coupling_ineq} is attained, i.e., the probability $\P[X=Y]$ is maximized. 
	
		A particular way to obtain maximal coupling is as follows: Denote the  ``minimum" distribution  of $\mu_1$ and $\mu_2$ by $\nu(\cdot)=\al^{-1}\min\{\mu_1(\cdot),\mu_2(\cdot)\},$ where
	$\al$ is the normalizer satisfying $\alpha=\P[X=Y].$
    With probability $\al,$ let  $X=Y\sim\nu,$ and with probability $(1-\al),$ let $X$ and $Y$ be independently  sampled such that
	\begin{eqnarray}\label{max_coupling}
		X\sim(1-\al)^{-1}(\mu_1-\al\nu),\quad  Y\sim(1-\al)^{-1}(\mu_2-\al\nu).
	\end{eqnarray}
It is not hard to verify that $\P[X\neq Y]=\text{TV}(\mu_1,\mu_2)/2$ (see \cite{Sason}, Theorem 1).

In the context of stochastic processes,   the maximal coupling is defined  in terms of  conditional distributions of the associated discrete-time  chains. Let $\{(X^h_n,Y^h_n);n\ge0\}$ be the  time-$h$ sampled chain of a  coupling of two solutions of \eqref{SDE2}. Assume that at step $n-1,$ $(X^h_{n-1},Y^h_{n-1})$ takes the value $(x,y)\in\R^k\times\R^k,$
Then $(X^h_n,Y^h_n)$ is a  maximal coupling at step $n$ if 	\begin{eqnarray}\label{coupling_ineq1}
		\text{TV}(\mu_x,\mu_y)=2\P[X^h_{n}\neq Y^h_{n}|X^h_{n-1}=x,Y^h_{n-1}=y].
	\end{eqnarray}	
where $\mu_x$ and $\mu_y$ denote the probability distribution of $X^h_n$ and $Y^h_n$ conditioning on $X^h_{n-1}=x$  and $Y^h_{n-1}=y$, respectively. 

In the proof of Theorem \ref{thm:1}, a key step is to bound the expected distance between two coupled processes under the maximal coupling. This estimate can be derived using the independent coupling, in which the two coupled random variables are independent.

\begin{prop}\label{prop:max<ind}
Let $(X,Y)$ be a coupling of  
two random variables such that $X\sim\mu_1, Y\sim\mu_2$. Assume that $\alpha:=\P[X=Y]<1.$ Then 
\begin{eqnarray*}
	\E_{\text{\rm max}}[|X-Y|^2]\le\dfrac{2}{1-\alpha}
	\E_{\text{\rm ind}}[|X-Y|^2]
\end{eqnarray*}
where $\E_{\text{\rm max}}$ and $\E_{\text{\rm ind}}$ denote expectations with respect to the maximal coupling and independent coupling, respectively. 		
\end{prop}
\begin{proof}
By the construction of the maximal coupling, we have 
\begin{eqnarray*}
\E_{\text{max}}[|X-Y|^2]&=&(1-\alpha)\int_{\R^{2k}}|x-y|^2
\dfrac{(\mu_1-\alpha\nu)(dx)}{1-\alpha}\cdot\dfrac{(\mu_2-\alpha\nu)(dy)}{1-\alpha}\\
&\le&\dfrac{1}{1-\alpha}\int_{\R^{2k}}|x-y|^2\mu_1(dx)\mu_2(dy)+\dfrac{1}{1-\alpha}\int_{\R^{2k}}|x-y|^2(\alpha\nu)(dx)(\alpha\nu)(dy)
\end{eqnarray*}
where $\nu=\alpha^{-1}\min\{\mu_1,\mu_2\}.$ Hence, 
\begin{eqnarray*}
\E_{\text{max}}[|X-Y|^2]
\le\dfrac{2}{1-\alpha}\int_{\R^{2k}}|x-y|^2\mu_1(dx)\mu_2(dy)
=\dfrac{2}{1-\alpha}\E_{\text{ind}}[|X-Y|^2].
\end{eqnarray*}
\end{proof}

As shown in Proposition \ref{prop:max<ind}, in the context stochastic processes, obtaining an upper bound of $\E_{\text{max}}[|X^h_n-Y^h_n|^2]$ requires that $\alpha_n:=\P[X^h_n=Y^h_n]$ remains uniformly bounded away from 1, independent of  $n$. 
Although in practical simulations, it is rarely observed that 
$\P[X^h_n=Y^h_n]$ exceeds 0.8,  
a rigorous theoretical verification of this uniform bound remains  challenging.  
To address this issue, we introduce a modified construction of  maximal coupling. For any given  $\alpha_0\in(0,1]$, define $\tilde\alpha_n=\min\{\alpha_n,\alpha_0\}$
where $\alpha_n=\P[X^h_n=Y^h_n].$ Let  $\mu_{1,n}$ and $\mu_{2,n}$ denote the distributions of $X^h_n$ and $Y^h_n,$ respectively, and define $\nu_n=\al_n^{-1}\min\{\mu_{1,n}(\cdot),\mu_{2,n}(\cdot)\}.$ Then with probability $\tilde\al_n,$ let  $X^h_n=Y^h_n\sim\nu_n,$ 
and with probability $(1-\tilde\al_n),$ let $X^h_n$ and $Y^h_n$ be independently sampled according to
\begin{eqnarray}\label{max_coupling2}
	X^h_n\sim(1-\tilde\al_n)^{-1}(\mu_{1,n}-\tilde\al_n\nu_n),\quad  Y^h_n\sim(1-\tilde\al_n)^{-1}(\mu_{2,n}-\tilde\al_n\nu_n)
\end{eqnarray}
such that $X^h_n\neq Y^h_n$. 
This modification ensures  
$\P[X^h_n=Y^h_n]=\tilde\alpha_n\le\alpha_0,$ so the coupling probability is uniformly bounded by $\alpha_0.$ Note that \eqref{max_coupling2} reduces to the standard maximal coupling \eqref{max_coupling} when $\al_0\ge\al_n.$

The modified construction of the maximal coupling in \eqref{max_coupling2} is referred to as the $\alpha_0$-maximal coupling for $\al_0\in(0,1].$ Henceforth, 
the term ``maximal coupling" refers to the $\al_0$-maximal coupling with  $\al_0$ fixed at $0.8.$ 
	
Under the reflection-maximal coupling scheme, a maximal coupling is implemented whenever triggered in the previous step. More specifically, 
$(X^h_n,Y^h_n)$ is a maximal coupling, if at the previous step $n-1,$ the distance between 
$X^h_{n-1}$ and 
$Y^h_{n-1}$ does not exceed a threshold $d$. In the numerical implementation, the distance between $X^h_n$ and $Y^h_n$ is evaluated at each step $n$ to determine whether  maximal coupling should be triggered for the next step $n+1$. If the condition is not met, reflection coupling is applied instead at step $n+1.$

The triggering of maximal coupling is  a crucial mechanism, especially in the numerical schemes, for ensuring a successful  coupling. It guarantees a positive success rate of coupling in the following step when the two processes are  sufficiently close. In the absence of maximal coupling,  numerical errors may cause  two  processes to ``miss" each other,  even if  they should theoretically be coupled successfully. Moreover, maximal coupling exhibits robustness to small perturbations, making it a reliable method in numerical simulations.

The following lemma shows that in the single-well setting, choosing the threshold $d=\mathcal O(\var\sqrt h)$ ensures both an  $\mathcal O(1)$ coupling probability and a uniform bound on the expected one-step distance between the two processes. 
It provides crucial estimates for the proof of Lemma \ref{lem:2} in Section 3. 
	
\begin{lemma}\label{lem:max_coupling}
Let $U$ be a  single-well potential  satisfying {\bf (U1)} which is strongly convex, and let $(X^h_n,Y^h_n)$ be a coupling  of the time-$h$ sampled chains of two solutions of  \eqref{SDE1}. 
Assume for  $n\ge1$,  $(X^h_n,Y^h_n)$ is a maximal coupling  conditional on $X^h_{n-1}=x_0, Y^h_{n-1}=y_0,$ where $x_0,y_0\in\R^k$ satisfy $|x_0-y_0|\le d=2\var\sqrt{h}.$ Then the following hold: 
		
		(i) There exists a constant $\ga\in (0, 1)$ such that for any $n\ge1$ and $h>0$,
        \[\P[|X^h_{n}-Y^h_{n}|>0|X^h_{n-1}=x_0, Y^h_{n-1}=y_0]
		\le\ga.\]
		
		(ii) 
		For any $n\ge1$ and any $h>0$ sufficiently small, 
		\begin{eqnarray*}
	\E[|X^h_{n}-Y^h_{n}|\big|X^h_{n-1}=x_0,Y^h_{n-1}=y_0]\le  
    c_1\var\sqrt{h}.
		\end{eqnarray*}
		where the constant $c_1>0$ is independent of $h,\var,$ and $n$.
	\end{lemma}
\begin{proof}
(i) By definition, 
the one-step conditional probability
\begin{eqnarray}\label{probability}
\P[X^h_n=Y^h_n|X^h_{n-1}=x_0, Y^h_{n-1}=y_0]
\end{eqnarray}
is maximized under the standard maximal coupling (i.e., the 1-maximal coupling). In particular,  
for any alternative coupling method, such as the reflection coupling, the probability in \eqref{probability} 
is no greater than that achieved under the maximal coupling. Hence, 
it suffices to establish that under the reflection coupling, with initial condition $X_0=x_0,Y_0=y_0$ and $|x_0-y_0|\le d=2\var\sqrt{h},$ 
the probability $\P[X_h=Y_h]$ remains uniformly away from 0
for all sufficiently small $h>0.$

Denote by $m_0$ the convexity parameter of $U$. From the proof of Proposition \ref{prop:sup_mart}, in the single-well case, the coupling time of the reflection coupling is bounded by that of one-dimensional  Ornstein-Uhlenbeck process $\{S_t\}$ governed by   \eqref{SDE:compare}, whose probability density function $p(t)$ is given by \eqref{pdf}.
Without loss of generality, assume 
$m_0=1.$
Then for any sufficiently small $h>0$ and $0 < t < h$,  
\begin{eqnarray*}
p(t)&\ge& a_0S_0\exp\Big\{\frac{t}{2}-\frac{S_0^2}{2}\cdot\frac{e^{2t}+1}{e^{2t}-1}\Big\}\Big/\big(e^t-e^{-t}\big)^{3/2}\\
&\ge&a_0S_0\exp\Big\{-\frac{2S_0^2}{e^{2t}-1}\Big\}\Big/\big(e^{\frac{2t}{3}}-e^{-\frac{4t}{3}}\big)^{3/2}\\
&\ge&a_0S_0e^{-\frac{S_0^2}{t}}\big/t^{\frac{3}{2}},
\end{eqnarray*}
for some constant $a_0>0$ independent of $t$ and $h$. Integrating over $[0,h]$ yields 
\begin{eqnarray*}	
\P[X_h=Y_h]=\int_0^h p(t)dt\ge a_0S_0\int_0^he^{-\frac{S_0^2}{t}}t^{-\frac{3}{2}}dt.
\end{eqnarray*}
Applying the change of variable  $u=S_0^2\big/t$ and using the assumption $S_0=|x_0-y_0|/2\var\le\sqrt h$ yields
\begin{eqnarray}\label{reflection}
\P[X_h=Y_h]\ge a_0\int_{S_0^2/h}^\infty e^{-u}u^{-\frac{1}{2}}du\ge
a_0\int_1^\infty e^{-u}u^{-\frac{1}{2}}du:=a_1>0.
\end{eqnarray}
%Since  probability is always no greater than 1, one has $0<a_1\le1.$

Now, \eqref{reflection} implies that 
under the $1$-maximal coupling, 
\begin{eqnarray}\label{1-probaiblity}
\P[X^h_n=Y^h_n|X^h_{n-1}=x_0, Y^h_{n-1}=y_0]\ge a_1>0.
\end{eqnarray}
Thus, for the 
$\alpha_0$-maximal coupling, 
by setting  
$\tilde a_1:=\min\{a_1,\al_0\}<1$, \eqref{1-probaiblity} yields 
\begin{eqnarray*}
\P[X^h_n=Y^h_n|X^h_{n-1}=x_0, Y^h_{n-1}=y_0]\ge \tilde a_1>0, 
\end{eqnarray*}
and hence 
\[\P[|X^h_{n}-Y^h_{n}|>0|X^h_{n-1}=x_0, Y^h_{n-1}=y_0]\le 1-\tilde a_1:=\ga.\]
In particular, $\ga\in(0,1)$ is independent of $h$ and $n.$ 

\medskip
(ii) Let $X_t$ and $Y_t$ be solutions of \eqref{SDE1} 
with the initial condition $X_0=x_0$ and $Y_0=y_0$. 
Under the independent coupling, where the noise terms
driving $X_t$ and $Y_t$ are independent, the following  holds
\begin{eqnarray}\label{independent_est}
\E_{\text{ind}}[|X_h-Y_h|^2]\le 6\var^2h.
\end{eqnarray}
To verify \eqref{independent_est}, apply Dynkin's formula to obtain
\begin{eqnarray*}
\E_{\text{ind}}[|X_h-Y_h|^2]&=&|x_0-y_0|^2+\E_{\text{ind}}\Big[\int_0^h
 \big( -2\langle\nabla U(X_s)-\nabla U(Y_s), X_s-Y_s\rangle+2\var^2 \big)
ds\Big],
\end{eqnarray*}
which, together with the strong convexity of $U$ in \eqref{def:unif_convex}, 
leads to
\begin{eqnarray*}
\E_{\text{ind}}[|X_h-Y_h|^2]\le|x_0-y_0|^2+2\var^2h. 
\end{eqnarray*}
Since  $|x_0-y_0|\le2\var\sqrt{h},$ it follows that $|x_0-y_0|^2\le4\var^2 h,$ and thus 
\eqref{independent_est} follows. 

Now, consider $\E[|X^h_{n}-Y^h_{n}|\big|X^h_{n-1}=x_0,Y^h_{n-1}=y_0]$
under the ($\al_0$-)maximal coupling. 
Applying Proposition \ref{prop:max<ind} with $\alpha=\alpha_0,$ 
together with the bound in  
\eqref{independent_est},
it follows that 
\begin{eqnarray*}
	\E[|X^h_{n}-Y^h_{n}|^2\big|X^h_{n-1}=x_0,Y^h_{n-1}=y_0]\le  \dfrac{2}{1-\alpha_0}\E_{\text{ind}}[|X_h-Y_h|^2]\le \dfrac{12}{1-\alpha_0}\var^2h.
\end{eqnarray*}
Applying H\"older's inequality, one obtains
\[\E[|X^h_{n}-Y^h_{n}|\big|X^h_{n-1}=x_0,Y^h_{n-1}=y_0]\le 
c_1\var\sqrt{h}\]
where  $c_1>0$ is a constant independent of $h, n$ and $\var.$ 
This completes the proof of (ii). 
\end{proof}

In concluding this subsection, we remark that the maximal coupling, 
as employed for numerical efficiency, is formulated for discrete-time processes. However, 
the theoretical results  in this paper are presented in the continuous-time setting.
To ensure consistency between the discrete-time numerical scheme and its continuous-time  theoretical counterpart, we assume that when the maximal coupling  is applied, the intermediate values of the  processes between the discrete steps are disregarded. That is, only the values at times $t=nh$ are relevant, and the behavior of the coupling process at times between the discrete steps has no influence on the analysis.

\subsection{An equivalent  
characterization of essential barrier height}
Let $U:D\to\R$ be a multi-well potential satisfying {\bf (U3)}. 
Throughout the paper, 
let the $L$ local minima of $U$ be labeled according to \eqref{ordering}, with $x_1$ being the unique global minimum.

The following proposition provides an equivalent characterization of the essential barrier height $H_U$ defined in \eqref{barrier_height_multi-well}.

\begin{prop}\label{prop:H_U}
Let $U$ be a multi-well potential on $D$ with $L$ local minima $x_i,i=1,\ldots,L.$ Then 
\begin{eqnarray}\label{equivalent_characterize_H_U}
		H_U=\max\nolimits_{2\le i
		\le L}\big\{\Phi(x_i,\mathcal M_{i-1})- U(x_{i})\big\},
	\end{eqnarray}
where $\mathcal M_i$ is defined as in \eqref{ordering}. 
In particular, 
\begin{eqnarray}\label{H_U=2}
H_U=\Phi(x_2,x_1)- U(x_{2}).    
\end{eqnarray}
\end{prop}
\begin{proof}
Since $x_1\in\mathcal M_{i-1}$ for all $i\in\{2,\ldots,L\},$ it follows that 
	\begin{eqnarray}\label{eq:3}
		H_U\ge\Phi(x_i,x_1)-U(x_i)\ge \Phi(x_i,\mathcal M_{i-1})-U(x_{i}),
	\end{eqnarray}
which yields
	\begin{eqnarray}\label{eq:1}
		H_U\ge \max\nolimits_{2\le i\le L}\big\{\Phi(x_i,\mathcal M_{i-1})- U(x_{i})\big\}.
	\end{eqnarray}

It remains to prove that the inequality in \eqref{eq:1} is in fact an equality. 
Suppose, by contradiction, that the inequality is strict; that is, 
	\begin{eqnarray}\label{eq:4}
		\Phi(x_i,\mathcal M_{i-1})-U(x_i)<H_U,\quad\forall i\in\{2,\ldots,L\}.
	\end{eqnarray}
Under this assumption, we {\it claim} that for each $i\in\{2,\ldots,L\}$, one has
\begin{eqnarray}\label{to1_small}
 \Phi(x_i,x_1)-U(x_i)<H_U,  
\end{eqnarray}
which further implies 
 \[H_U=\max\nolimits_{2\le i\le L}\{\Phi(x_i,x_1)-U(x_i)\}<H_U,\]
yielding a contradiction. 

Now, it only needs to prove \eqref{to1_small}. Fix $i_0\in\{2,\ldots,L\}.$ By \eqref{eq:4}, 
there exists $x_{i_1}\in\mathcal M_{i_0-1}$ such that 
\begin{eqnarray}\label{contradiction}
\Phi(x_{i_0},x_{i_1})-U(x_{i_0})=  \Phi(x_{i_0},\mathcal M_{i_0-1})-U(x_{i_0})<H_U.
\end{eqnarray}
Since $x_{i_1}\in\mathcal M_{i_0-1}$, it follows that $i_1<i_0$. Moreover, the ordering in \eqref{ordering} yields
\begin{eqnarray*}
\Phi(x_{i_0},\mathcal M_{i_{0}-1})-U(x_{i_{0}})<\Phi(x_{i_{1}},\mathcal M_{i_0}\backslash x_{i_{1}})-U(x_{i_{1}}).
\end{eqnarray*}
Using the fact that  
$\Phi(x_{i_0},\mathcal M_{i_0-1})=\Phi(x_{i_0},x_{i_{1}})$ and $ \Phi(x_{i_{1}},\mathcal M_{i_0}\backslash x_{i_{1}})\le\Phi(x_{i_{1}},x_{i_0}),$ we obtain 
\begin{eqnarray*}
\Phi(x_{i_0},x_{i_1})-U(x_{i_0})<\Phi(x_{i_{1}},x_{i_0})-U(x_{i_1}).
\end{eqnarray*}
Hence, $U(x_{i_1})<U(x_{i_0}).$

If $i_1=1,$ then \eqref{to1_small} follows directly from \eqref{contradiction}. 
Otherwise, the same argument can be applied recursively: 
for $i_1\in\{2,\ldots,L\},$
there exists $i_2\in\mathcal M_{i_1-1}$ such that 
$\Phi(x_{i_1},x_{i_2})-U(x_{i_1})<H_U$, with $i_2<i_1$ and $U(x_{i_2})<U(x_{i_1}).$ Continuing inductively, a finite sequence of indices 
$
i_0>i_1>\cdots>i_k=1,\ \text{with finite $k\le L,$} 
$
is obtained such that
\begin{eqnarray}\label{U_order}
U(x_{i_k})<\cdots<U(x_{i_0}).
\end{eqnarray}
Hence,
\begin{eqnarray*}
\Phi(x_{i_0},x_1)-U(x_{i_0})&\le&\max\nolimits_{0\le j< k}\Phi(x_{i_j},x_{i_{j+1}})-U(x_{i_0})\\
&\le&\max\nolimits_{0\le j<k}\big\{\Phi(x_{i_j},x_{i_{j+1}})-U(x_{i_j})\big\}<H_U,
\end{eqnarray*}
where the final inequality follows from \eqref{contradiction}
 and \eqref{U_order}. This obtains  \eqref{to1_small}.
    
%Now \eqref{to1_small} completes the proof of the claim and thus establishes  \eqref{equivalent_characterize_H_U}. 
Since the local minima $x_i$ are labeled according to \eqref{ordering}, identity \eqref{H_U=2} follows directly. 
\end{proof}

\begin{remark}\label{rem:unique_B2}
{\rm 
In fact, $x_2$ is the unique local minimum such that 
\eqref{H_U=2} is satisfied. In other words,
\begin{eqnarray}\label{small_H_U}
\Phi(x_i,x_1)- U(x_{i})<H_U,\quad \forall i>2.   
\end{eqnarray}
To see this, suppose for the sake of contradiction that there exists $i_0>2$ such that \eqref{small_H_U} does not hold. Then it follows that 
$\Phi(x_{i_0},x_1)- U(x_{i_0})=H_U,$ as 
it always holds that $\Phi(x_{i_0},x_1)- U(x_{i_0})\le H_U.$ 
Since, by \eqref{ordering},   $\Phi(x_{i_0},\mathcal M_{i_0-1})-U(x_{i_0})<H_U,$ there exists an index $1<i_1<i_0$  such that 
\begin{eqnarray*}
\Phi(x_{i_0},x_{i_1})-U(x_{i_0})=\Phi(x_{i_0},\mathcal M_{i_0-1})-U(x_{i_0})<H_U.
\end{eqnarray*}
This leads to a contradiction, as this argument can be applied repeatedly until eventually arriving at $i_k=1$ for some finite $k.$
}
\end{remark}

\subsection{Multi-well potential and first hitting time}\label{sec:hitting time}
Given a multi-well potential 
$U:D\to\R$ satisfying  {\bf (U3)}, let $\bs Z=\{Z_t;t\ge0\}$ be a solution of \eqref{SDE1} and $A\subseteq D$ be a subset. Denote the {\it first hitting time} of $Z_t$ to $A$ as
\begin{eqnarray}\label{kappa}
\kappa_{\bs Z}(A)=\inf\{t>0:Z_t\in A\}.
\end{eqnarray}

It is well known, from large deviation theory,  that the first hitting time from a local minimum  $x_i$ to an appropriate subset is asymptotically exponentially distributed, with the exponent determined  by the associated (i.e., {\it local}\ ) barrier heights 
(\cite{day1983,freidlin1998random,metastability2004, metastability2005}).
The essential barrier height $H_U$ plays  a similar role in a {\it global} sense, characterizing the first hitting time to the basin of the global minimum from any local minimum.

\begin{lemma}\label{lem:expon_tail_multi_well}
	Let $\bs Z=\{Z_t;t\ge0\}$ be a solution of \eqref{SDE1} with initial condition $Z_0=z.$
	Then for any $t>0$ and any $\var>0$ sufficiently small, 
	\begin{eqnarray}\label{eq:7}
		\P_z[\ka_{\bs Z}(B_1)>t]\le A_{z,\var}\exp\Big\{-C_\var e^{-2H_U/\var^2}t\Big\},
	\end{eqnarray}
	where $C_\var>0$ and $A_{z,\var}>0$ are constants  
    such that the limit
  $\lim_{\var\to0}C_\var$ exists,
and $A_{z,\var}$ depends on both the initial value $z$ and the noise strength $\var$, but is independent of $t$. 
Moreover, if the initial condition $Z_0 $ is fully supported with distribution $\mu$, then 
    \begin{eqnarray}
        \label{eq:8}
        \P_z[\ka_{\bs Z}(B_1)>t]\simeq A_{\mu,\var}\exp\Big\{-C_\var e^{-2H_U/\var^2}t\Big\},
    \end{eqnarray}
where constant $A_{\mu,\var}>0$ depends on both $\mu$ and $\var$.  \end{lemma}

The proof of Lemma \ref{lem:expon_tail_multi_well}  follows closely the approach in \cite{metastability2004, metastability2005}, relying on  estimates for the eigenvalues and eigenfunctions of the generator. 
As it is not directly relevant to the main focus of the paper, the proof is deferred to Appendix \ref{appendix_1}. 

\medskip
Define
\begin{eqnarray}\label{I}
\mathcal{I} =\big\{1 \leq i \leq L : \Phi(x_2, x_i) = \Phi(x_2, x_1)\big\} 
\end{eqnarray}
to be the set of indices corresponding to local minima whose communication height with $x_2$  equals that between $x_2$ and $x_1.$
Let \begin{eqnarray}\label{collection_B1}
\bs B_1 = \bigcup\nolimits_{i \in \mathcal{I}} B_i.
\end{eqnarray}
Clearly, \( B_1 \subseteq \bs B_1 \) since \( 1 \in \mathcal{I} \). 
As argued in Remark \ref{rem:hitting_time}, the conclusion in Lemma \ref{lem:expon_tail_multi_well} remains valid if the set $B_1$ is replaced by the larger set $\bs B_1$, that is, 
\begin{eqnarray}\label{genaralized_8}
\P_z[\ka_{\bs Z}(\bs B_1)>t]\simeq A_{\mu,\var}\exp\Big\{-C_\var e^{-2H_U/\var^2}t\Big\},
\end{eqnarray}
for process $\bs Z$ with initial distribution $\mu$.

The following proposition establishes a property for indices not belonging to $\mathcal I$.
\begin{prop}\label{prop:collection_B1}
For any $j\in\{1,\ldots,L\}\backslash\mathcal I$, 
$\Phi(x_1,x_j)-U(x_1)>H_U$.    
\end{prop}

\begin{proof}
By the definition of $\mathcal I$, it  follows directly from  
\eqref{H_U=2} that 
\begin{eqnarray*}
\Phi(x_2,x_i)-U(x_2)=H_U,\quad \forall i\in\mathcal I.
\end{eqnarray*}
Thus, for  $j\notin\mathcal I,$   it follows that either 
(i) $\Phi(x_2,x_j)-U(x_2)<H_U$, or 
(ii) $\Phi(x_2,x_j)-U(x_2)>H_U$.

First, consider case (i). We claim that 
\begin{eqnarray}\label{j}
    \Phi(x_j,x_1)>\Phi(x_j,x_2).
\end{eqnarray}
Indeed, if \eqref{j} fails, then 
\[\Phi(x_2,x_1)\le\max\{\Phi(x_2,x_j),\Phi(x_j,x_1)\}\le\Phi(x_2,x_j),\] which implies 
\[H_U=\Phi(x_2,x_1)-U(x_2)\le \Phi(x_2,x_j)-U(x_2)<H_U,\]
a contradiction. 
Thus, \eqref{j} holds. It then follows that 
\[\Phi(x_1,x_2)-U(x_1)\le
\max\{\Phi(x_1,x_j),\Phi(x_j,x_2)\}-U(x_1)
=\Phi(x_1,x_j)-U(x_1).\]
Since $\Phi(x_1,x_2)-U(x_1)>\Phi(x_2,x_1)-U(x_2)=H_U$, we conclude that 
\[\Phi(x_1,x_j)-U(x_1)>H_U.\]

Next, consider case (ii). Assume that $\Phi(x_2,x_j)-U(x_2)>H_U$. 
Suppose, by contradiction, that $\Phi(x_1,x_j)-U(x_1)\le H_U.$ Then  
\begin{eqnarray*}
\Phi(x_2,x_j)-U(x_2)&\le&\max\{\Phi(x_2,x_1),\Phi(x_1,x_j)\}-U(x_2)\\
&\le&\max\{\Phi(x_2,x_1)-U(x_2),\Phi(x_1,x_j)-U(x_1)\}\le H_U, 
\end{eqnarray*}
contradicting the assumption that $\Phi(x_2,x_j)-U(x_2)>H_U.$
Hence, $\Phi(x_1,x_j)-U(x_1)>H_U.$
\end{proof}
From the proof of Proposition \ref{prop:collection_B1}, we see that
$i\in\mathcal I$ if and only if 
\begin{align}\label{I_equivalent}
\Phi(x_i,x_1)<\Phi(x_i,x_2)\quad\text{and}\quad
\Phi(x_2,x_i)-U(x_2)\le H_U.
\end{align}
That is, among the local minima that can be reached from $x_2$ via a barrier  not exceeding $H_U,$ 
the set $\mathcal I$ consists precisely of the indices for which the corresponding minima are more  accessible to the global minimum $x_1$ than to the local minimum $x_2.$ Accordingly, each basin  in the collection $\bs B_1$
is referred to as a {\it nearby} basin (relative to $x_1$), whereas  $B_2$ is referred to as the {\it distant} basin. 

\subsection{An upper bound in the form of probability generating function}
Given  $C_0>0,\la_0>1,$ and $m\in\N$, define
\begin{eqnarray}\label{g}
	g(\la; C_0,\la_0,m)=\lambda^m+ C_0\sum_{n=m}^{\infty}(\lambda^{n+1}-\lambda^{n})\la_0^{-n},\quad \la\in\R.
\end{eqnarray}
It is not hard to see that $g(\la; C_0,\la_0,m)<\infty,$ $\forall\lambda\in(1,\la_0)$.

The right-hand side of \eqref{g}  is motivated by the probability generating function. The following 
proposition states that if a  random variable $T$ exhibits exponential decay,  then
$\E[\la^T]$ is bounded above by a certain function $g.$

\begin{prop}\label{prop:prob_generate_fun2} Let $T$ be a  random variable taking positive real values. Assume that  for some constant $t_0>0,$
	\begin{eqnarray}\label{T}
\P[T>t]\le C_0\la_0^{-t},\quad\forall t\ge t_0.
	\end{eqnarray}
Then for any $\lambda\in(1,\la_0),$ it holds that 
\begin{eqnarray*}
\E[\lambda^T]\le g(\la; C_0,\la_0,n_0)<\infty
\end{eqnarray*}
where $n_0=\lfloor t_0\rfloor+1.$ 
In particular, if $t_0\in(0,1),$  then \[\E[\lambda^T]\le g(\la; C_0,\la_0,1)<\infty.\]
\end{prop}
\begin{proof}
Note that
\begin{eqnarray*}
\E[\lambda^T]\le
	\sum_{n=0}^\infty
	\lambda^{n+1}\mathbb P[n<T\le n+1]
\le\la+\sum_{n=1}^\infty(\la^{n+1}-\la^n)\P[T>n],
\end{eqnarray*}
where, for $n_0>1,$ the right-hand side can be rewritten as
\begin{eqnarray*}
\la+\sum\limits_{n=1}^{n_0-1}(\lambda^{n+1}-\la^n)\mathbb P[T>n]+\sum\limits_{n=n_0}^\infty
(\lambda^{n+1}-\lambda^n)\mathbb P[T>n].
\end{eqnarray*}
By \eqref{T}, for any $n\ge n_0$, $\P[T>n]\le C_0\la_0^{-n}.$ It then  follows that 
\begin{eqnarray*}
\E[\la^T]\le\la^{n_0}+
C_0\sum_{n=n_0}^\infty
(\lambda^{n+1}-\la^n)\la_0^{-n}
=g(\lambda;C_0,\lambda_0,n_0)<\infty.
\end{eqnarray*}
\end{proof}

Note that for $m=1,$
\begin{eqnarray}\label{g_limit}
	g(\la;C_0,\la_0,1)\to1,\quad \text{as}\  \la\to1.
\end{eqnarray}
Given $\rho>1,\lambda_0>1,$ and $C_0>0$,  define
\begin{eqnarray}\label{beta}
\beta(\rho;C_0,\la_0)=\min\big\{1,\beta^*\big\},
\end{eqnarray}
where 
\begin{eqnarray}\label{beta1}
\beta^*=
\begin{cases}
\dfrac{-(\lambda_0+\rho+C_0-2)+\sqrt{(\lambda_0+\rho+C_0-2)^2+4(C_0-1)(\la_0-1)(\rho-1)}}{2(C_0-1)(\lambda_0-1)},&\text{if}\ C_0\neq 1,\\
\dfrac{\rho-1}{\la_0+\rho-1},&\text{if}\ C_0=1.
\end{cases}
\end{eqnarray}

\medskip
The following Proposition \ref{prop:prob_generate_fun1} provides a quantitative characterization of the 
approximation in \eqref{g_limit}. 
\begin{prop}\label{prop:prob_generate_fun1}
Given  $\rho>1, \lambda_0>1$ and $C_0>0,$
let $\beta=\beta(\rho;C_0,\la_0)$ be as in \eqref{beta}. Then for any $\la\in\big(1,1+\beta(\la_0-1)\big),$ it holds that 
	\begin{eqnarray*}
		g(\la;C_0,\la_0,1)<\rho.
	\end{eqnarray*} 
	\end{prop}
\begin{proof}
	Write $\la=1+\beta(\la_0-1)$ with $\beta\in(0,1),$ we have 
	\begin{eqnarray*}
	g(\la;C_0,\la_0,1)&=&\la+C_0(\la-1)\sum_{n=1}^\infty(\la/\la_0)^n\\
		%&=&\la+C_0\la\dfrac{\la-1}{(\la_0-\la)}\\
        &=&(1+\beta(\lambda_0-1))\Big(1+C_0\frac{\beta}{1-\beta}\Big)
	\end{eqnarray*}
To have $g(\la;C_0,\la_0,1)<\rho,$ it suffices
\begin{eqnarray*}
(C_0-1)(\la_0-1)\beta^2+(\lambda_0+C_0+\rho-2)\beta+1-\rho<0.
\end{eqnarray*}
This specifies the definition of $\beta$ in \eqref{beta}. Proposition \ref{prop:prob_generate_fun1} is proved.
\end{proof}

\section{Single-well potential and proof of Theorem \ref{thm:1}}
Throughout this section, $U$ is a  strongly convex single-well potential  satisfying   {\bf (U1)}. Let  $(X_t,Y_t)$ denote an $h$-reflection-maximal coupling of two solutions of  \eqref{SDE1}. 
The threshold  $d=\mathcal O(\var\sqrt{h})$, at which the coupling $(X_t,Y_t)$  switches between the reflection and maximal couplings,  is  set to $2\var\sqrt{h}$. 

Define 
\begin{eqnarray*}
	\tau_h^{(1)}=\inf\Big\{t\ge0:|X_t-Y_t|\in(0,2\var\sqrt{h}],\ {\text{and for some $s\in(0,t),$}}\
	|X_s-Y_s|>2\var\sqrt{h}\Big\}, 
	%&&\tau_h^{(2)}=\inf\nolimits_{n\ge0}\big\{t=nh:|\cX_t-\cY_t|=0\}.
\end{eqnarray*}
with the convention that  $\tau_h^{(1)}=\infty$ if the set is empty. 	Note that $\tau_h^{(1)}$ is the infimum time at which the distance between $X_t$ and $Y_t$ attains the threshold $d=2\var\sqrt{h}$ from a distance greater than this value.  

Since it is possible for  $|X_t-Y_t|$ to never exceed the threshold $d$ before a successful  coupling occurs, in which case $\tau_h^{(1)}=\infty,$ define
	\begin{eqnarray*}
		\tau_h=\tau_h^{(1)}\wedge\tau_c. 
	\end{eqnarray*}
Note that  $\tau_h<\infty$ holds almost surely.
It  will be shown later that the coupling time $\tau_c$ is almost surely a finite iteration of $\tau_h$.

	\subsection{Estimation of $\tau_h$.} 
	In this subsection,  estimates of $\tau_h$ are provided under the two  initial conditions $|X_0-Y_0|>2\var\sqrt{h}$ and 
 $|X_0-Y_0|\le2\var\sqrt{h},$ respectively. 
		
If $|X_0-Y_0|>2\var\sqrt{h},$ then $(X_t,Y_t)$ remains a reflection coupling until $t=\tau_h^{(1)}$, when  $(X_t,Y_t)$  switches to the maximal coupling. Proposition \ref{prop:sup_mart} immediately yields the following. 
	
\begin{lemma}\label{lem:1}
		Assume $|X_0-Y_0|/2\var=r_0>\sqrt{h}.$
		Then $\tau_h=\tau_h^{(1)}$ holds $\P{\text{-a.s.}}$, and for any $t_0>0$, there exists a constant $c_0>0$ such that		\begin{eqnarray}\label{ineq:4}
			\P[\tau_h>t]\le  c_0r_0e^{-m_0t},\quad \forall t\ge t_0.
		\end{eqnarray}
In particular, by letting $0<t_0<1$ and applying Proposition \ref{prop:prob_generate_fun2}, for any $\la\in(1,e^{m_0}),$
\begin{eqnarray}\label{ineq:5}
			\E[\lambda^{\tau_h}]\le g(\lambda; c_0r_0, e^{m_0}, 1).
\end{eqnarray}
\end{lemma}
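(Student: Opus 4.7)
The plan is built around three short steps: use continuity of the reflection-coupling distance process to identify $\tau_h$ with $\tau_h^{(1)}$, import Proposition \ref{prop:sup_mart} to get the exponential tail, and then invoke Proposition \ref{prop:prob_generate_fun2} for the generating-function bound.

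First I would argue that $\tau_h = \tau_h^{(1)}$ almost surely. Since $|\cX_0 - \cY_0| = r_0 > 2\var\sqrt{h}$, by definition of the reflection-maximal scheme the pair $(\cX_t,\cY_t)$ evolves according to the pure reflection coupling SDE \eqref{reflection_coupling} as long as $|\cX_t-\cY_t| > 2\var\sqrt{h}$. In particular, on $[0,\tau_h^{(1)})$ the distance process $R_t := |\cX_t-\cY_t|$ is continuous and strictly greater than $2\var\sqrt{h}$, so $\tau_c > \tau_h^{(1)}$ whenever $\tau_h^{(1)}<\infty$. Conversely, coupling under pure reflection succeeds almost surely (as is implicit in Proposition \ref{prop:sup_mart}, which gives a finite tail for $\tau_c$), and $R_t$ is continuous, so it cannot hit $0$ without first crossing the positive threshold $2\var\sqrt{h}$. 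This rules out $\tau_h^{(1)}=\infty$. Hence $\tau_h = \tau_h^{(1)} \wedge \tau_c = \tau_h^{(1)}$ almost surely.

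Next, for the tail bound I would run the argument on the pure reflection coupling driven by the same Brownian motions and with the same initial datum, which agrees with $(\cX_t,\cY_t)$ up to time $\tau_h^{(1)}$. Let $\tau_c^{\mathrm{refl}}$ denote its coupling time, i.e., the first hitting time of $0$ by $R_t$. By the continuity argument above, $\tau_h^{(1)} \le \tau_c^{\mathrm{refl}}$ almost surely. Proposition \ref{prop:sup_mart} applied to this pure reflection coupling then gives
\begin{eqnarray*}
\P[\tau_h > t] \;=\; \P[\tau_h^{(1)} > t] \;\le\; \P[\tau_c^{\mathrm{refl}} > t] \;\le\; \frac{c_0 r_0}{2\var}\, e^{-m_0 t},
\end{eqnarray*}
which is \eqref{ineq:4}.

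For \eqref{ineq:5}, $\tau_h$ is a nonnegative random variable whose survival function decays at rate $m_0$ with prefactor $c_0 r_0/(2\var)$, so Proposition \ref{prop:prob_generate_fun2} applied with $C_0 = c_0 r_0/(2\var)$ and $\la_0 = e^{m_0}$ yields $\E[\la^{\tau_h}] \le g(\la; c_0 r_0/(2\var), e^{m_0})$ for every $\la \in (1, e^{m_0})$.

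There is no serious obstacle here; the only subtlety worth flagging is the continuity-based inequality $\tau_h^{(1)} \le \tau_c^{\mathrm{refl}}$, which lets us avoid estimating a first-passage time of the Ornstein--Uhlenbeck dominator to the positive level $\sqrt{h}$ and instead reuse the $0$-hitting bound already established in Proposition \ref{prop:sup_mart} as a black box.
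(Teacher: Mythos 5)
Your proposal is correct and follows essentially the same route as the paper: the paper treats the lemma as an immediate consequence of Proposition \ref{prop:sup_mart}, observing that $(\cX_t,\cY_t)$ evolves as a pure reflection coupling until $\tau_h$, and your continuity argument (the distance process must cross the threshold $2\var\sqrt{h}$ before it can reach $0$, so $\tau_h=\tau_h^{(1)}\le\tau_c^{\mathrm{refl}}$) simply makes explicit the comparison the paper leaves implicit. The final step via Proposition \ref{prop:prob_generate_fun2} with $C_0=c_0r_0/(2\var)$ and $\la_0=e^{m_0}$ matches the paper as well.
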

	\medskip

\begin{remark}\label{rem:approximate} 
{\rm The estimation in \eqref{ineq:4} is for the continuous-time process instead of its time-$h$ sampled chain, which the numerical scheme truly  approximates. 
Let $\tau_h^0$ ({\it resp.} $\tau_h^h$) be the first passage time of the coupling process  $(X_t,Y_t)$  ({\it resp.} its time-$h$ sampled chain $(X_n^h,Y_n^h)$) to the set $\{(x,y)\in\R^k\times\R^k:|x-y|\le2\var\sqrt{h}\}.$ It is obvious that $\tau_h^h\ge\tau_h^0,$ 	and it is intuitive that their difference, which is generally difficult to theoretically estimate,  should  approach to zero as $h$ tends to zero, i.e.,  
		\begin{eqnarray}\label{approximate}
			\lim\nolimits_{h\to0}(\tau_h^h-\tau^0_h )=0,\quad\text{$\P$-a.s.}
		\end{eqnarray}
	Throughout this section, \eqref{approximate} is  always assumed and will be numerically verified  in Section 5  for the example of symmetric quadratic potential functions. Therefore,  the estimation \eqref{ineq:4}   applies to the time-$h$ sampled chain $(X^h_n,Y^h_n)$ (with a possible slight enlargement of $c_0$  if necessary) whenever $h$ is sufficiently small. }
\end{remark}

 The analysis becomes more intricate  for the initial condition $|X_0-Y_0|\le 2\var\sqrt{h}$, as the coupling method between $X_t$ and $Y_t$ may switch during the time interval $(0,\tau_h).$  Specifically, there exists $n>0$
such that the coupling between the time-$h$ sampled chains $X_{ih}$ and $Y_{ih}$ remains a maximal coupling for $0\le i<n$. At the step $i=n$, either 
 $X_{nh}=Y_{nh}$, indicating a successful coupling, or  $|X_{nh}-Y_{nh}|>2\var\sqrt{h}.$ 
In the former case, $\tau_h=\tau_c$; in the latter, $\tau_h=\tau_h^{(1)}$, and $(X_t,Y_t)$ evolves under a  reflection coupling until the condition $|X_t-Y_t|\le2\var\sqrt{h}$ is satisfied again.

\begin{lemma}\label{lem:2}
Given any $t_1>0$, there exist $h_0>0$ and $C_0>0$ such that  for all $h\in(0,h_0),$ 	if $|X_0-Y_0|\le2\var\sqrt{h},$ then
	\begin{eqnarray*}
		\P[\tau_h>t]\le C_0\sqrt{h} e^{-m_0t},\quad\forall t\ge t_1.
	\end{eqnarray*}
In particular, by choosing $0<t_1<1$ and applying Proposition \ref{prop:prob_generate_fun2}, for any $\la\in(1,e^{m_0})$ and $h>0$ sufficiently small,	\begin{eqnarray}\label{ineq:10_2}
		\E[\lambda^{\tau_h}]\le g(\lambda; C_0\sqrt{h}, e^{m_0},1)<\infty.
	\end{eqnarray}
\end{lemma}
\begin{proof}
	Recall from the proof of Proposition \ref{prop:sup_mart} that the process $R_t=|X_t-Y_t|/2\var$ is a one-dimensional stochastic process  induced by the coupling $(X_t,Y_t)$. 
Let $n=\lfloor t/h\rfloor\in\N$. Based on the coupling behaviors between $X_t$ and $Y_t$ before  the stopping time $\tau_h,$ one has
	\begin{eqnarray*}
\P[\tau_h>t]&\le&\P[\tau_h>nh]\\
&=&\sum_{j=1}^n\P[R_{ih}\in(0,\sqrt{h}], 0\le i\le j-1]\\
		&&\cdot\ \Big(\P[R_{jh}>\sqrt{h}|R_{(j-1)h}\in(0, \sqrt{h}]]
		\cdot\P[\tau_h^{(1)}\circ\theta^{jh}> t-jh|R_{jh}> \sqrt{h}]\Big)\\
		&&+\ \P[R_{ih}\in(0,\sqrt{h}], 0\le i\le  n],
	\end{eqnarray*}
where $\theta$ is the usual shift operator. 	

For any $i\ge1,$ since $(X_{ih},Y_{ih})$ is a maximal coupling whenever $|X_{(i-1)h}-Y_{(i-1)h}|\le2\var\sqrt{h},$ it follows from Lemma \ref{lem:max_coupling} (i) that 
\[\P\big[R_{ih}>0|R_{(i-1)h}\in(0,\sqrt{h}]\big]\le\ga,\] where $\ga\in(0,1)$ is independent of $i$ and $h.$
Thereofor, by the Markov property, for any $2\le j\le n,$
	\begin{eqnarray*}
		\P\big[R_{ih}\in(0,\sqrt{h}], 0\le i\le j-1\big]=\prod_{i=1}^{j-1}\P\big[R_{ih}\in(0,\sqrt{h}]|R_{(i-1)h}\in(0,\sqrt{h}]\big]\le \gamma^{j-1},
	\end{eqnarray*}
and this also holds trivially for $j=1$.
Consequently,
	\begin{eqnarray}
\P[\tau_h>t]
\le\sum_{j=1} ^{n}\gamma^{j-1}\Big(\P[R_{jh}>\sqrt{h}|R_{(j-1)h}\in(0, \sqrt{h}]]
		\cdot\P[\tau_h\circ\theta^{jh}> t-jh|R_{jh}> \sqrt{h}]\Big)
		+\ga^n	\label{ineq:1}.
	\end{eqnarray}

Now, for $1\le j\le n,$ consider estimating
\begin{eqnarray}\label{expression:1}
		\P[R_{jh}> \sqrt{h}|R_{(j-1)h}\in(0, \sqrt{h}]]
		\cdot\P[\tau_h\circ\theta^{jh}>t-jh|R_{jh}> \sqrt{h}],
	\end{eqnarray}
which equals
	\begin{eqnarray}\label{eq:P}	\int_{\sqrt{h}}^{\infty}\P\big[\tau_h\circ\theta^{jh}>t-jh|R_{jh}= r\big]\P\big[R_{jh}=dr|R_{(j-1)h}\in(0, \sqrt{h}]\big].
	\end{eqnarray}

Fix $t_0\in(0,t_1).$ Then for any $1\le j\le \lfloor\frac{t-t_0}{h}\rfloor$, it holds that $t-jh\ge t_0$.
Since $R_0=r>\sqrt{h}$, Lemma \ref{lem:1} and the Markov property implies that 
\begin{eqnarray*}
\P\big[\tau_h\circ\theta^{jh}>t-jh|R_{jh}= r\big]
=\P[\tau_h>t-jh|R_0=r]\le c_0re^{-m_0(t-jh)},
\end{eqnarray*}
where $c_0>0$ is the constant given in Lemma \ref{lem:1}.
Therefore, for any such $j,$
\begin{eqnarray*}
	\eqref{eq:P} &\le& c_0e^{-m_0(t-jh)}
		\int_{\sqrt{h}}^\infty r\P[R_{jh}=dr|R_{(j-1)h}\in(0, \sqrt{h}]]\\
	 &\le&c_0e^{-m_0(t-jh)}\cdot\E[R_{jh}|R_{(j-1)h}\in(0,\sqrt{h}]].%\quad1\le j\le \lfloor\frac{t-t_0}{h}\rfloor	
	\end{eqnarray*}
Since $R_{(j-1)h}\le \sqrt{h},$ and hence $|X_{(j-1)h}-Y_{(j-1)h}|\le2\var\sqrt{h}$, Lemma \ref{lem:max_coupling} (ii) implies that for sufficiently small $h>0$, 
	\begin{eqnarray*}
		\E\big[R_{jh}\big|R_{(j-1)h}\in(0,\sqrt{h}]\big] &=&\frac{1}{2\var}\E\big[|X_{jh}-Y_{jh}|\big||X_{(j-1)h}-Y_{(j-1)h}|
		\le2\var\sqrt{h}\big]\\
		&\le&\frac{1}{2}c_1\sqrt{h}.
	\end{eqnarray*}
Thus,
	\begin{eqnarray}\label{ineq:45}
	\eqref{expression:1}\le  C_0\sqrt{h}e^{-m_0(t-jh)},\quad\ \forall 1\le j\le \lfloor\frac{t-t_0}{h}\rfloor,
		\end{eqnarray}
for some constant $C_0>0$ independent of $h, \var,$ and $j.$ Moreover, 
\begin{eqnarray}\label{ineq:46}
\eqref{expression:1}\le1,\quad\ \text{$\lfloor\frac{t-t_0}{h}\rfloor< j\le \lfloor \frac{t}{h}\rfloor:=n.$}
\end{eqnarray}

Combining \eqref{ineq:1}, \eqref{ineq:45}, and \eqref{ineq:46},  it follows that
\begin{eqnarray}
\P[\tau_h>t]&\le&
C_0\sqrt{h}\sum_{j=1}^{\lfloor \frac{t-t_0}{h}\rfloor}\gamma^{j-1}e^{-m_0(t-jh)}+\sum_{j=\lfloor \frac{t-t_0}{h}\rfloor}^{n}\gamma^{j}\nonumber\\
&\le&C_0\sqrt{h}\dfrac{e^{m_0h}}{1-\ga e^{m_0h}}e^{-m_0t}+\dfrac{\ga^{\lfloor \frac{t-t_0}{h}\rfloor}}{1-\ga}\label{ineq:7}
\end{eqnarray}
Let $0<h_0\le |\ln\gamma|/m_0$ be sufficiently small so that for any $h\in(0,h_0),$ 
	\begin{eqnarray*}
		\ga^{\lfloor \frac{t-t_0}{h}\rfloor}\le\sqrt{h}e^{-m_0t},\quad\forall t\ge t_1.
	\end{eqnarray*}
Since ${e^{m_0h}}/(1-\ga e^{m_0h})\to 1/(1-\gamma)$ as $h\to0,$ by enlarging  $C_0$ in \eqref{ineq:7} if necessary, it follows that
\begin{eqnarray*}
	\P[\tau_h>t]\le C_0\sqrt{h}e^{-m_0t},\quad\forall t\ge t_1.
\end{eqnarray*}
\end{proof}

Combining Lemma \ref{lem:1} and Lemma \ref{lem:2}, the following holds. 
\begin{lemma}\label{lem:3}
	Assume  $\E[|X_0-Y_0|]<\infty.$ Then for any $h\in(0,h_0)$ where $h_0>0$ is as in Lemma \ref{lem:2}, for any $\la\in(1,e^{m_0}),$ it holds that 
	\begin{eqnarray*}
		\E[\la^{\tau_h}]<\infty. 
	\end{eqnarray*}
\end{lemma}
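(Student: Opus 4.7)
The plan is to condition on the initial separation $|\cX_0 - \cY_0|$ and patch together the two preceding lemmas via the tower property,
\[
\E[\la^{\tau_h}] = \E\!\left[\E[\la^{\tau_h} \mid \cX_0, \cY_0]\,\mathbf 1_{\{|\cX_0-\cY_0|>2\var\sqrt{h}\}}\right] + \E\!\left[\E[\la^{\tau_h} \mid \cX_0, \cY_0]\,\mathbf 1_{\{|\cX_0-\cY_0|\le 2\var\sqrt{h}\}}\right].
\]
On the second event, Lemma \ref{lem:2} gives the deterministic bound $\E[\la^{\tau_h}\mid\cX_0,\cY_0]\le g(\la; C_0\sqrt{h}, e^{m_0})$, which is finite for every $\la\in(1,e^{m_0})$. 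On the first event, applying Lemma \ref{lem:1} with $r_0 = |\cX_0-\cY_0|$ yields the pointwise bound $\E[\la^{\tau_h}\mid\cX_0,\cY_0]\le g(\la; c_0|\cX_0-\cY_0|/(2\var), e^{m_0})$.

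The key observation is that the function $g(\la;\cdot,\la_0)$ is \emph{affine} in its middle argument: summing the geometric series in the definition \eqref{g} gives
\[
g(\la; C, e^{m_0}) \;=\; \la \;+\; C(\la-1)\,\frac{\la}{e^{m_0}-\la},
\]
whenever $\la\in(1,e^{m_0})$. Consequently the expectation commutes through cleanly, and for any such $\la$,
\[
\E[\la^{\tau_h}] \;\le\; \la \;+\; \frac{c_0\,\la(\la-1)}{2\var(e^{m_0}-\la)}\,\E[|\cX_0-\cY_0|] \;+\; g(\la; C_0\sqrt{h}, e^{m_0}),
\]
which is finite by the hypothesis $\E[|\cX_0-\cY_0|]<\infty$. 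This establishes the claim with $\la_h = e^{m_0}$ (and the constant $h_0$ already produced by Lemma \ref{lem:2}).

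There is no substantive obstacle here; the result is essentially a bookkeeping step that averages the two pointwise moment-generating bounds. The only point requiring care is ensuring the integrability of the right-hand side when we integrate Lemma \ref{lem:1}'s estimate, which is exactly why the affine dependence of $g$ on the prefactor $c_0 r_0/(2\var)$ is crucial — it converts the finite first moment assumption $\E[|\cX_0-\cY_0|]<\infty$ directly into a finite bound for the moment generating function of $\tau_h$, without any further restriction on the initial distribution beyond integrability of the distance.
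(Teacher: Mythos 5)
Your proposal is correct and follows essentially the same route as the paper: condition on whether $|\cX_0-\cY_0|$ exceeds the threshold $2\var\sqrt{h}$, apply Lemma \ref{lem:2} on the small-separation event and Lemma \ref{lem:1} on the large-separation event, and exploit the affine dependence of $g(\la;\cdot,\la_0)$ on its middle argument to integrate the bound against the initial distribution, so that $\E[|\cX_0-\cY_0|]<\infty$ yields finiteness. Your explicit summation of the geometric series is just a more concrete rendering of the same computation the paper performs when it passes from $g(\la;c_0\int r\,\mu(dr),\la_h)$ to $g(\la;c_0\E[R_0],\la_h)$.
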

\begin{proof}
Recall  the  one-dimensional stochastic process $R_t=|X_t-Y_t|/(2\var)$, $t\ge0.$ 
Let $\E_r[\cdot]$ denote the expectation with respect to the initial condition $R_0=r.$ Then
	\begin{eqnarray}\label{ineq:two terms}
		\E[\la^{\tau_h}]&=&\int_0^ {\sqrt{h}}\E_r[\la^{\tau_h}]\mu(dr)+\int_{\sqrt{h}}^\infty\E_r[\la^{\tau_h}]\mu(dr)
	\end{eqnarray}
where $\mu$ denotes the distribution  of $R_0$. 

By Lemma \ref{lem:2}, for sufficiently small $h>0$ , the first term
on the right-hand side 
of \eqref{ineq:two terms} satisfies
\begin{eqnarray}\label{term1}
	\int_0^ {\sqrt{h}}\E_r[\la^{\tau_h}]\mu(dr)\le\int_0^{\sqrt{h}} g(\la; C_0\sqrt{h}, e^{m_0},1)\sqrt{h}\mu(dr)\le g(\la; C_0\sqrt{h},e^{m_0},1)\sqrt{h}. 
\end{eqnarray}
By \eqref{ineq:5}, the second term on the right-hand side of \eqref{ineq:two terms} is bounded as
\begin{eqnarray}
\int_{\sqrt{h}}^\infty\E_r[\la^{\tau_h}]\mu(dr)
&\le&\int_{\sqrt{h}}^\infty g(\lambda; c_0r, e^{m_0},1)\mu(dr)\nonumber\\
&=&
g(\la; c_0\int_{\sqrt{h}}^\infty r\mu(dr),e^{m_0},1)\le g(\la; c_0\E[R_0],e^{m_0},1)\label{term2}.
\end{eqnarray}

Combining \eqref{term1} and \eqref{term2}, we have
\begin{eqnarray*}
\E[\la^{\tau_h}]\le g(\la; C_0\sqrt{h},e^{m_0},1)\sqrt{h}+g(\la; c_0\E[R_0],e^{m_0},1).
\end{eqnarray*}	
Since $\E[R_0]=\E[|X_0-Y_0|]/2\var<\infty,$ the lemma is proved. 
\end{proof}

\subsection{Iteration of $\tau_h$ and  coupling times}
The coupling time $\tau_c$ is in fact a finite iteration of $\tau_h.$ To see this, define
\begin{eqnarray*}
	\tau_h^0=0,\quad \tau_h^k=\tau_h^{k-1}+\tau_h\circ\theta^{\tau_h^{k-1}},\  k\ge1
\end{eqnarray*} 
where $\theta$ is the usual shift operator, and let
\begin{eqnarray*}
	%\eta=\inf\{k\ge1: R^{h}_{\tau_h^k}=0\},
	\eta=\inf\big\{k\ge1: X_{\tau_h^k}=Y_{\tau_h^k}\big\}.
\end{eqnarray*}

The following proposition immediately follows from the definition of $\tau_h.$ 
\begin{prop}\label{prop:1}
	Given any $h>0$ and $k\ge1.$ The following hold: 
	\begin{itemize}	
		\item[(i)]  $|X_{\tau_h^k}-Y_{\tau_h^k}|=2\var\sqrt{h}\ \text{or}\ 0,$ where $X_{\tau_h^k}=Y_{\tau_h^k}$ if and only if $k\ge\eta;$
		\item[(ii)] If  $k>1,$ then \[\P[|X_{\tau_h^k}-Y_{\tau_h^k}|>0| \cF_{\tau_h^{k-1}}]<\gamma.\]
		where $\gamma$ is as in Lemma \ref{lem:max_coupling}. 
		\end{itemize}
\end{prop}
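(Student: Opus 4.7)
The proof will proceed by induction on $k$ for part (i), and then derive part (ii) as a short consequence using the preservation property of the coupling.

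For part (i), I start with the base case $k=1$, where $\tau_h^1 = \tau_h = \tau_h^{(1)} \wedge \tau_c$. On $\{\tau_c \le \tau_h^{(1)}\}$ the pair has successfully coupled, so $\cX_{\tau_h} = \cY_{\tau_h}$ and the distance is $0$. On the complementary event $\{\tau_h^{(1)} < \tau_c\}$, the definition of $\tau_h^{(1)}$ supplies some $s < \tau_h^{(1)}$ with $|\cX_s - \cY_s| > 2\var\sqrt{h}$; along the whole interval $[s, \tau_h^{(1)})$ the pair lies above the threshold and hence evolves under the reflection coupling SDE, whose trajectory $t \mapsto |\cX_t - \cY_t|$ is continuous. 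Combined with $|\cX_{\tau_h^{(1)}} - \cY_{\tau_h^{(1)}}| \in (0, 2\var\sqrt{h}]$, the intermediate value theorem forces $|\cX_{\tau_h^{(1)}} - \cY_{\tau_h^{(1)}}| = 2\var\sqrt{h}$. The inductive step is the strong Markov property at $\tau_h^{k-1}$: on $\{k-1 \ge \eta\}$ the two processes are already coupled and remain so through $\tau_h^k$; on $\{k-1 < \eta\}$ the inductive hypothesis gives $|\cX_{\tau_h^{k-1}} - \cY_{\tau_h^{k-1}}| = 2\var\sqrt{h}$, so the base-case argument applies verbatim to the shifted trajectory. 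The ``iff'' statement is immediate from the definition of $\eta$ and the coupling property that keeps the trajectories equal for all times after they first meet.

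For part (ii), the central observation is the event inclusion
\[
\{|\cX_{\tau_h^k} - \cY_{\tau_h^k}| > 0\} \subseteq \{|\cX_{\tau_h^{k-1}+h} - \cY_{\tau_h^{k-1}+h}| > 0\},
\]
since if the pair is already coupled at $\tau_h^{k-1} + h$ then by the defining property of a coupling they remain coupled through $\tau_h^k$. Because $k>1$, part (i) forces $|\cX_{\tau_h^{k-1}} - \cY_{\tau_h^{k-1}}| \in \{0, 2\var\sqrt{h}\}$. On $\{k-1 \ge \eta\}$ the conditional probability in (ii) is trivially zero; on $\{k-1 < \eta\}$ the distance at $\tau_h^{k-1}$ equals the threshold, so by the reflection-maximal scheme the very next discrete step at $\tau_h^{k-1}+h$ is a maximal coupling step, and Lemma \ref{lem:max_coupling}(i) provides the bound $\gamma$ uniformly on $\cF_{\tau_h^{k-1}}$.

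The main technical point, though minor, is the continuity argument establishing $|\cX_{\tau_h^{(1)}} - \cY_{\tau_h^{(1)}}| = 2\var\sqrt{h}$ exactly: this relies on the reflection coupling being a continuous-time SDE while the pair remains on the open set $\{|\cX_t - \cY_t| > 2\var\sqrt{h}\}$. With that in hand, everything else reduces to careful bookkeeping through the strong Markov property and the inductive hypothesis.
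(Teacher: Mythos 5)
Your proposal is correct and follows exactly the reasoning the paper intends: the paper states Proposition \ref{prop:1} as an immediate consequence of the definition of $\tau_h$, and your write-up simply makes those immediate observations explicit (the distance can only re-enter $(0,2\var\sqrt{h}]$ by hitting the threshold continuously during a reflection phase, and at $\tau_h^{k-1}$ with $k>1$ the next step is a maximal-coupling step, so Lemma \ref{lem:max_coupling}(i) gives the bound $\gamma$ via the strong Markov property). No substantive difference from the paper's approach.
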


By Proposition \ref{prop:1} (i), 
\begin{eqnarray*}
	\tau_c=\tau_h^\eta,\quad \P\mbox{-a.s.}
\end{eqnarray*}
Hence, the estimation of 
$\tau_c$ is reduced to the estimation of $\tau^\eta_h.$
%following lemma follow the idea of Lemma 2.9 in \cite{}.  
\begin{theorem}\label{thm:3}
	Assume 
	$\E[|X_0-Y_0|]<\infty.$
	Then for any $\delta>0,$ there exists $ h_0>0$ such that for any $h\in(0,h_0)$ and any $\la\in(1,e^{m_0-\delta}),$ it holds that
	\begin{eqnarray*}
		\E[\lambda^{\tau_h^\eta}]<\infty.
	\end{eqnarray*}
\end{theorem}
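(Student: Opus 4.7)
The plan is to turn $\tau_h^\eta=\sum_{k=1}^\eta(\tau_h^k-\tau_h^{k-1})$ into a geometric-type sum via iterated use of the strong Markov property. Writing
\[
\E[\la^{\tau_h^\eta}]=\sum_{n=1}^\infty\E[\la^{\tau_h^n}\I_{\eta=n}],
\]
I would condition at $\tau_h^{n-1}$ and exploit the crucial fact from Proposition~\ref{prop:1}(i) that, on $\{\eta\ge n\}$, the configuration at $\tau_h^{n-1}$ lies exactly at distance $2\var\sqrt h$. By strong Markov the post-$\tau_h^{n-1}$ piece is then a fresh copy of $\tau_h$ starting from this configuration, and Lemma~\ref{lem:2} controls its moment generating function. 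Introducing the shorthand
\[
B(\la,h)=\sup\E_*[\la^{\tau_h}\I_{\cX_{\tau_h}=\cY_{\tau_h}}],\qquad A(\la,h)=\sup\E_*[\la^{\tau_h}\I_{\cX_{\tau_h}\neq\cY_{\tau_h}}],
\]
where the supremum is taken over initial configurations with $|\cX_0-\cY_0|=2\var\sqrt h$, the tower property then yields, for $n\ge 2$,
\[
\E[\la^{\tau_h^n}\I_{\eta=n}]\le B(\la,h)\,\E[\la^{\tau_h^{n-1}}\I_{\eta\ge n}]\le B(\la,h)\,A(\la,h)^{n-2}\,\E[\la^{\tau_h}],
\]
the last inequality coming from iterating the same strong-Markov argument on the tail event $\{\eta\ge n\}=\{\eta\ge n-1\}\cap\{\cX_{\tau_h^{n-1}}\neq\cY_{\tau_h^{n-1}}\}$.

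Since Lemma~\ref{lem:3} provides $\E[\la^{\tau_h}]<\infty$ and Lemma~\ref{lem:2} gives $B(\la,h)\le g(\la;C_0\sqrt h,e^{m_0})<\infty$, summability of the series reduces to showing $A(\la,h)<1$ uniformly on $\la\in(1,e^{m_0-\delta})$ for all sufficiently small $h$. This is the main obstacle: the naive estimate $A(\la,h)\le\E_*[\la^{\tau_h}]\le g(\la;C_0\sqrt h,e^{m_0})$ tends to $\la$ as $h\to 0$, which is strictly greater than $1$ throughout the relevant range, so one must genuinely exploit the ``killing'' induced by the indicator $\I_{\cX_{\tau_h}\neq\cY_{\tau_h}}$, and neither H\"older nor Cauchy--Schwarz alone suffices to cover the full range up to $e^{m_0-\delta}$.

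The trick I would use is the layer-cake representation
\[
\E_*[\la^{\tau_h}\I_E]=\P_*[E]+\log\la\int_0^\infty\la^s\,\P_*\bigl[E\cap\{\tau_h>s\}\bigr]\,ds,
\]
with $E=\{\cX_{\tau_h}\neq\cY_{\tau_h}\}$. For the leading term, Proposition~\ref{prop:1}(ii) gives $\P_*[E]<\gamma$ (applied with $k=2$, since the state at $\tau_h^1$ on $\{\eta\ge 2\}$ is exactly our starting configuration at distance $2\var\sqrt h$). For the integrand, the trivial bound $\P_*[E\cap\{\tau_h>s\}]\le\P_*[\tau_h>s]\le C_0\sqrt h\,e^{-m_0s}$ from Lemma~\ref{lem:2} is sharp enough, and the integral converges because $\log\la<m_0-\delta$, producing
\[
A(\la,h)\le \gamma+\frac{C_0\sqrt h\,\log\la}{m_0-\log\la}\le \gamma+\frac{C_0\sqrt h\,(m_0-\delta)}{\delta}.
\]
Choosing $h_0$ so small that the second term falls below $(1-\gamma)/2$ then yields $A(\la,h)\le(1+\gamma)/2<1$ for all $h\in(0,h_0)$ and all $\la\in(1,e^{m_0-\delta})$. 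Summing the geometric series in $n$ gives
\[
\E[\la^{\tau_h^\eta}]\le \E[\la^{\tau_h}]\Bigl(1+\frac{B(\la,h)}{1-A(\la,h)}\Bigr)<\infty,
\]
which is the conclusion of the theorem.
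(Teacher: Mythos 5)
Your proof is correct, and it diverges from the paper precisely at the step where the real difficulty sits, in a way that is arguably cleaner. The paper uses the same cycle structure you do (strong Markov at the regeneration times $\tau_h^k$, where Proposition~\ref{prop:1}(i) puts the pair at distance exactly $2\var\sqrt h$, plus a geometric sum over cycles), but it controls the ``killed'' factor $\E_{\sqrt h}\big[\I_{\{\cX_{\tau_h}\neq\cY_{\tau_h}\}}\la^{\tau_h}\big]$ by H\"older with exponent $1/p$, obtaining $\ga^{1-p}\,g(\la^{1/p};C_0\sqrt h,e^{m_0})^{p}$, and then has to balance $p$ against $\ga$ through Proposition~\ref{prop:prob_generate_fun1} and the limits $h\to0$, $p\to1$. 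Your layer-cake identity replaces this with the direct estimate $A(\la,h)\le\P_*[\cX_{\tau_h}\neq\cY_{\tau_h}]+\log\la\int_0^\infty\la^s\,\P_*[\tau_h>s]\,ds\le\ga+C_0\sqrt h\,\log\la/(m_0-\log\la)$, which is uniformly below $1$ on all of $(1,e^{m_0-\delta})$ once $h<h_0(\delta)$; your diagnosis that the H\"older bound degenerates to $\ga^{1-p}\la$ as $h\to0$, and hence for a fixed $\ga$ (the paper fixes the threshold $d=2\var\sqrt h$) only reaches $\la<\min\{\ga^{-(1-p)},e^{pm_0}\}$, is accurate and pinpoints exactly the delicate point that the published proof glosses over in its final ``$p$ close to $1$, $h$ small'' step. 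The ingredients you invoke are all legitimately available: the uniform tail $\P_*[\tau_h>s]\le C_0\sqrt h\,e^{-m_0 s}$ over starting configurations at distance at most $2\var\sqrt h$ is Lemma~\ref{lem:2}; the bound $\P_*[\cX_{\tau_h}\neq\cY_{\tau_h}]\le\ga$ is Proposition~\ref{prop:1}(ii) (equivalently Lemma~\ref{lem:max_coupling}(i), since failing to couple at $\tau_h$ forces failure of the first maximal-coupling step); $B(\la,h)\le g(\la;C_0\sqrt h,e^{m_0})$ is Lemma~\ref{lem:2} with Proposition~\ref{prop:prob_generate_fun2}; and $\E[\la^{\tau_h}]<\infty$ is Lemma~\ref{lem:3}, which is where the hypothesis $\E[|\cX_0-\cY_0|]<\infty$ enters. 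Two small points to note, shared with the paper rather than specific to you: the decomposition over $\{\eta=n\}$ uses $\eta<\infty$ a.s.\ (immediate from Proposition~\ref{prop:1}(ii)), and the strong Markov factorization at the $\tau_h^k$ rests on the same hybrid continuous/discrete idealization of the reflection-maximal scheme that the paper itself adopts.
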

\begin{proof}
	The proof follows the approach of Lemma 2.9 in \cite{nummelin1982}. Note that
	\begin{eqnarray}
		\E[\lambda^{\tau_h^\eta}]%&=&\sum_{k=1}^\infty\E[\lambda^{\tau_h^k}\I_{\eta=k}]\nonumber\\
		&\le&\sum_{k=1}^\infty\E[\lambda^{\tau_h^k}\I_{\eta\ge k}]\\
		&=&\E\big[\lambda^{
			\tau_h}\I_{\eta\ge1}\big]+\sum_{k=2}^\infty\E\big[\I_{\eta\ge k}\lambda^{\tau_h^{k-1}}\E[\lambda^{\tau_h\circ\theta^{\tau_h^{k-1}}}|\cF_{\tau_h^{k-1}}]\big],\nonumber
	\end{eqnarray}
	where the last equality follows from the fact that $\lambda^{\tau_h^{k-1}}\in\cF_{\tau_h^{k-1}}$  and $\{\eta\ge k\}\in\cF_{\tau_h^{k-1}}.$ 
	
We retain the notation $R_t=|X_t-Y_t|/2\var$ for $t\ge0$, and let $\E_{r}$ denote the expectation with respect to the initial condition $R_0=r.$
	By Proposition \ref {prop:1} (i),  $R_{\tau_h^{k-1}}=\sqrt{h}$ for $2\le k\le\eta.$ By  \eqref{ineq:10_2} and the strong Markov property, 
	\begin{eqnarray*}		\E\big[\lambda^{\tau_h\circ\theta^{\tau_h^{k-1}}}|\cF_{\tau_h^{k-1}}\big]\le\E_{\sqrt{h}}[\lambda^{\tau_h}]\le g(\lambda;C_0\sqrt{h},e^{m_0},1)<\infty,\quad \forall k\ge1,
	\end{eqnarray*}
	where $C_0>0$ is as in Lemma \ref{lem:2}.
	Thus, 	\begin{eqnarray}\label{ineq:9}		\E[\lambda^{\tau_h^\eta}]\le\E[\lambda^{
    \tau_h}\I_{\eta\ge1}]+  g(\lambda;C_0\sqrt{h},e^{m_0},1)\sum_{k=2}^\infty\E[\I_{\eta\ge k}\lambda^{\tau_h^{k-1}}].
	\end{eqnarray}	
	
	Now, for  $k\ge2,$ we estimate $\E[\I_{\eta\ge k}\lambda^{\tau_h^{k-1}}]$. 
Write
	%\begin{eqnarray*}
	$\I_{\eta\ge k}=\I_{\eta\ge k-1}\I_{R_{\tau_h^{k-1}}>0}$.
	%\end{eqnarray*}
	Then we have
	\begin{eqnarray}
		\E\big[\I_{\eta\ge k}\lambda^{\tau_h^{k-1}}\big]
		&=&\E\big[\I_{\eta\ge k-1}\la^{\tau_h^{k-2}}\E\big[\I_{R_{\tau_h^{k-1}}>0}\lambda^{\tau_h\circ\theta^{\tau_h^{k-2}}}|\cF_{\tau_h^{k-2}}\big]\big]\nonumber\\
		&=&\E\big[\I_{\eta\ge k-1}\la^{\tau_h^{k-2}}\E_{R_{\tau_h^{k-2}}}[\I_{R_{\tau_h}>0}\lambda^{\tau_h}]\big]\label{eq:E}.
	\end{eqnarray}
	Note that for $k=2,$
    \begin{eqnarray}\label{k=2}
        \E_{R_{\tau_h^{k-2}}}[\I_{R_{\tau_h}>0}\lambda^{\tau_h}]=\E_{R_0}[\I_{R_{\tau_h}>0}\lambda^{\tau_h}]\le\E[\la^{\tau_h}].
    \end{eqnarray}
 For $k>2,$ since  $R_{\tau_h^{k-2}}=\sqrt{h}$, the strong Markov property implies that \begin{eqnarray}\label{ineq:E}
		\E_{R_{\tau_h^{k-2}}}[\I_{R_{\tau_h}>0}\lambda^{\tau_h}]\le		
			\E_{\sqrt{h}}[\I_{R_{\tau_h}>0}\lambda^{\tau_h}]
			\end{eqnarray}
By  the H\"older's inequality, for any $p\in(0,1),$
	\begin{eqnarray*}
		\E_{\sqrt h}[\I_{R_{\tau_h}>0}\lambda^{\tau_h}]&\le&(\E_{\sqrt{h}}[\I_{R_{\tau_h}>0}])^{1-p}\cdot(\E_{\sqrt{h}}[\lambda^{\tau_h/p}])^{p}\\
        &=&(\P_{\sqrt{h}}[{R_{\tau_h}>0}])^{1-p}\cdot(\E_{\sqrt{h}}[\lambda^{\tau_h/p}])^{p}.	
	\end{eqnarray*}
	Then it follows from  Proposition \ref{prop:1} (ii) and \eqref{ineq:10_2} that
\begin{eqnarray}\label{ineq:holder}
\E_{\sqrt{h}}[\I_{R_{\tau_h}>0}\lambda^{\tau_h}]\le\gamma^{1-p}g(\la^{1/p}; C_0\sqrt{h},e^{m_0},1)^{p}.
	\end{eqnarray}	

Substituting \eqref{k=2}-\eqref{ineq:holder} into \eqref{eq:E}, we obtain
\begin{eqnarray*}
\E[\I_{\eta\ge k}\lambda^{\tau_h^{k-1}}]\le
\begin{cases}
\E[\la^{\tau_h}], & k=2\\
\gamma^{1-p}g(\lambda^{1/p}; C_0\sqrt{h},e^{m_0},1)^p\cdot\E[\I_{\eta\ge k-1}\lambda^{\tau_h^{k-2}}],  & k>2.
\end{cases}
\end{eqnarray*}
By induction, for $k\ge2,$
	\begin{eqnarray*}
		\E[\I_{\eta\ge k}\lambda^{\tau_h^{k-1}}]\le\gamma^{(1-p)(k-2)}g(\lambda^{1/p}; C_0\sqrt{h},e^{m_0},1)^{p(k-2)}\cdot\E[\la^{\tau_h}]
	\end{eqnarray*}
	Therefore,  \eqref{ineq:9}  yields 
	\begin{eqnarray*}		\E[\lambda^{\tau_h^\eta}]\le \E[\la^{\tau_h}]\Big(1+ g(\lambda;C_0\sqrt{h},e^{m_0},1)\sum_{k=2}^\infty \Big(\gamma^{1-p}g(\lambda^{1/p}; C_0\sqrt{h},e^{m_0},1)^{p}\Big)^{k-2}\Big)
	\end{eqnarray*}		

By Lemma \ref{lem:3}, $\E[\la^{\tau_h}]<\infty.$ Thus, to guarantee $\E[\lambda^{\tau_h^\eta}]<\infty$, it suffices
\begin{eqnarray}\label{ineq:6}
		g(\la^{1/p};C_0\sqrt{h},e^{m_0},1)<\gamma^{-(1-p)/p}
	\end{eqnarray}
By Proposition \ref{prop:prob_generate_fun1}, \eqref{ineq:6} holds for any $\la>1$ satisfying $\la^{1/p}\in\big(1,1+\beta(e^{m_0}-1)\big)$,
where \[\beta=\min\{1,\beta^*\},\] and $\beta^*$ is given by
\eqref{beta1} with 
$\lambda_0=e^{m_0}$, $\rho=\ga^{-(1-p)/p},$ and $C_0$ replaced by $C_0\sqrt{h}.$

Note that  
$\beta^*\to1,$ and hence $\beta\to1,$ as $h\to0$.
Since $p$ can be  arbitrarily close to $1$,  by choosing $h>0$  sufficiently small, 
 it follows that 
	\[\E[\lambda^{\tau_h^\eta}]<\infty,\quad\forall \la\in(1,e^{m_0-\de}),\] 
where  $\de>0$ is arbitrarily small. 
\end{proof}

\medskip

The proof of Theorem \ref{thm:1} is  now straightforward. 

\medskip

\noindent{\it Proof of Theorem \ref{thm:1}}: 
Note that for any $\la\in(1,\infty)$ satisfying $\E[\la^{\tau_h^\eta}]<\infty,$ we have 
\begin{eqnarray*}
	\P[\tau_h^\eta>t]\le \E[\la^{\tau_h^\eta}]\la^{-t},\quad\forall t>0.
\end{eqnarray*}
By Theorem \ref{thm:3}, for any $\la\in(1,e^{m_0-\delta})$ and any $\delta>0,$
\begin{eqnarray*}
	\limsup_{t\to\infty}\dfrac{1}{t}\log\P[\tau_h^\eta>t]\le 
	-m_0+m_0\delta.
\end{eqnarray*}

Theorem \ref{thm:1} is proved by  taking $\delta$ as $m_0\delta.$

\section{Multi-well potentials and proof of Theorem \ref{thm:2}-Theorem \ref{thm:5}}\label{sec:double-multiple well} Throughout this section, let $U$ be a multi-well potential, and let $(X_t,Y_t)$ be a coupling of two solutions of \eqref{SDE1}. 
Section 4.1-4.3 focus primarily on the  double-well potential  $U$, while the general case of more than two wells 
 is discussed in Section 4.4. 
 
\subsection{Key stopping times for double-well potential}\label{subsec:stopping_time}
Assume that $U$ is a double-well potential satisfying  {\bf (U2)} with two basins $B_1$ and $B_2$. 
Let
\begin{eqnarray*}
\tau^{(1)}_\var=\inf\Big\{t>h:(X_t,Y_t)\in B_1\times B_1\ \text{or}\ B_2\times B_2\Big\}
\end{eqnarray*}
denote the infimum time when $X_t$ and $Y_t$ lie in the {\it same} basin of $U.$ Here, the subscript ``$\var$" emphasizes the  role of the noise magnitude $\var$ in determining the stopping times in the multi-well setting. We note that $\tau^{(1)}_\var$ is finite   
$\P$-almost surely. 

\begin{remark}
{\rm 
When $X_t$ or $Y_t$ is initiated near a basin boundary, 
repeated boundary crossings within an infinitesimal time interval may occur, making the analysis cumbersome. 
To circumvent these non-essential complications, $\tau_\var^{(1)}$ is defined after a small positive time. Specifically, this positive time is chosen as the numerical step size $h$ to  ensure compatibility with the numerical simulations. This convention is adopted for all stopping times defined in this section.  
}
\end{remark}

If initially,  $X_t$ and $Y_t$  already belong to the same basin, then $\tau_{\var}^{(1)} = h$ with probability close to $1$. Now, assume that $X_0$ and $ Y_0$    belong to  different basins. Without loss of generality, let $Y_0\in B_1.$ Then we have
  \begin{eqnarray*}
			\tau_\var^{(1)}=\ka_{\bs X}(B_1)\wedge\ka_{\bs Y}(B_2),
		\end{eqnarray*}
 where recall that 
$\ka_{\bs X}(B_1)$  ({\it resp.} $\ka_{\bs Y}(B_2)$) denotes the first hitting time of the process $X_t$  ({\it resp.} $Y_t$) to the basin $B_1$ ({\it resp.} $B_2$). %as defined in \eqref{kappa}.  	

Throughout this section, let\begin{eqnarray}\label{la_var}
\la_\var=\exp\big\{C_\var e^{{-2H_U}/{\var^2}}\big\}, \end{eqnarray} 
where, in the double-well case, $H_U$ is the essential barrier height defined in \eqref{barrier_height},
and 
$C_\var>0$ is any constant that is not uniquely determined  and satisfies $\lim_{\var\to0}C_\var>0$, with the limit depending only on $U.$ By Lemma \ref{lem:expon_tail_multi_well}, 
\begin{eqnarray}\label{ineq:11}
\P[\tau_\var^{(1)}>t]\leq\P[\ka_{\bs X}(B_1)>t]\lesssim \la_\var^{-t},\quad\forall t>0. 
\end{eqnarray}

\medskip

Before proceeding, recall from Section \ref{sec:hitting time}  that for the multi-well potential, an enlarged set $\bs B_1$, defined in \eqref{collection_B1}, 
consists of all nearby basins,  in particular, including $B_1.$  
By Proposition \ref{prop:collection_B1}, for any local minimum $x_j$ 
with the corresponding basin $B_j$ not contained in $\bs B_1,$ one has 
\begin{eqnarray}\label{eq:11}
\Phi(x_1,x_j)-U(x_1)>\Phi(x_2,x_i)-U(x_2)=H_U,\quad\forall i\in\mathcal I.
\end{eqnarray}
This suggests that a process starting from the global minimum $x_1$ must overcome a higher barrier to exit the enlarged set  $\bs B_1$ than a process  starting from $x_2$ has to overcome to enter it. 

\medskip
In light of \eqref{eq:11}, in the multi-well setting, which in particular  includes the double-well case, we assume the following 
 {\bf (H1)}  for the coupling scheme.
 
\medskip

\noindent{\bf (H1)}
There exist constants $\delta_0>0$ and $\ga_0>0$ such that if
the coupling $(X_t,Y_t)$ satisfies the initial condition  $X_0\in B_{\delta_0}(x_2), Y_0\in B_{\delta_0}(x_1),$ then 
for any $\var>0$ sufficiently small, 
\begin{eqnarray}\label{ineq:14}
	\P\big[Y_s\in \bs B_1\ \text{for all}\  s\in[0,t]\big| \ka_{\bs X}(\bs B_1)>t\big]\ge\ga_0,\quad\forall t>0.
\end{eqnarray}

\vspace{1.5mm}

The assumption {\bf (H1)} states  that when $X_t$ and $Y_t$ start from the bottom of the basins $B_2$ and  $B_1$, respectively, 
the probability that $Y_t$  remains in the enlarged set $\bs {B}_1$, given that $X_t$ has not yet entered $\bs {B}_1$,  is uniformly positive and independent of $\var$ and $t$. In Section 5, {\bf (H1)} is numerically verified. At present, although  {\bf (H1)}, along with the forthcoming assumptions {\bf (H2)}-{\bf (H3)}, cannot be rigorously verified, Section~\ref{numerical verification} provides numerical evidence supporting their validity in an interacting particle system with multiple local minima. 

\begin{remark}
{\rm 
The assumption {\bf (H1)} naturally arises in the context of reflection coupling. For simplicity, consider the one-dimensional  double-well potential. Let $\varphi(t), t \in [0, T],$ be a $C^1$ function satisfying $\varphi(0)=0$, and let $\delta>0$ be a small constant such that 
\begin{eqnarray*}
|(X_t - X_0) - \varphi(t)| < \delta, \quad\forall t\in [0,T].
\end{eqnarray*}    
Then the Wiener process $B^x_t$ associated with  $X_t$ must stay in the neighborhood of  $\varphi
(t)+\int_0^t\nabla U(X_0 + \varphi(s))ds$. Due to the reflection, the Brownian motion terms in $X_t$ and $Y_t$ are symmetric. Consequently, the corresponding Wiener process $B^y_t$ of $Y_t$ must stay in the small neighborhood of 
$-\varphi
(t) - \int_0^t\nabla U(X_0 + \varphi(s))ds,$
which has the same action functional as $\varphi (t)+\int_0^t\nabla U(X_0 + \varphi(s))ds.$
If $X_t$ exits the shallower well $B_2$ at some $T > 0$, 
then with high probability,
the final segment of $X_t$ remains in a small neighborhood of the minimum energy path, denoted by $\phi(t)$ (see, for instance, Theorem 2.3 in Chapter 4 of  \cite{freidlin1998random}). This implies that the Brownian motion term $B^x_t$ stays in the neighborhood of \[\phi(t) + \int_0^t\nabla U(\phi(s)) \mathrm{d}s,\] whose action functional equals $2 (\Phi(x_1, x_2) - U(x_2) )$, which is strictly less than $2 H_U$. On the other hand, for $Y_t$ to exit $B_1$ from the neighborhood of $x_1$, its trajectory must have an action functional of at least $2 H_U$. Therefore, when $X_t$ exits from the shallower well, it is highly likely that $Y_t$ remains in $B_1$. 
Unfortunately, to the best of our knowledge, this argument is difficult to establish  rigorously, as the Freidlin-Wentzell large deviation theory applies only to a {\it fixed} time span $[0, T]$ as $\varepsilon \rightarrow 0$. However, the tail estimates required in this paper necessitate estimates that hold for arbitrarily large $t$. 
}
\end{remark}

In the double-well setting, $\bs B_1$ is simply $B_1$. Hence,  \eqref{ineq:14} is  reduced to 
\begin{eqnarray}\label{ineq:19}
\P[\ka_{\bs Y}(B_2)>t|\ka_{\bs X}(B_1)>t]\ge\ga_0.
\end{eqnarray}
Under {\bf (H1)},  the reverse of \eqref{ineq:11} holds if $X_0\in B_{\de_0}(x_1)$ and $Y_0\in B_{\de_0}(x_2)$: 
\begin{eqnarray}
\P[\tau^{(1)}_\var>t]&\ge&\P[\ka_{\bs X}(B_1)>t,\ka_{\bs Y}(B_2)>t]\nonumber\\
&=&\P[\ka_{\bs Y}(B_2)>t|\ka_{\bs X}(B_1)>t]\cdot\P[\ka_{\bs X}(B_1)>t]\nonumber\\
&\ge&\ga_0\cdot\P[\ka_{\bs X}(B_1)>t]\ 
\simeq\ \lambda_\var^{-t}\label{ineq:15}
\end{eqnarray}	
where the last ``$\simeq$" follows from Lemma \ref{lem:expon_tail_multi_well}, by choosing $\delta_0>0$  sufficiently small. 

		\medskip
		
In contrast to $\tau^{(1)}_\var,$  define another stopping time 
\begin{eqnarray*}
\tau^{(2)}_\var=&\inf&\Big\{t> h:(X_t,Y_t)\in B_1\times B_2\ \text{or}\ B_2\times B_1, \ \text{and for some}\ s\in(h,t),\\ &&(X_s,Y_s)\in B_1\times B_1 \ \text{or}\  B_2\times B_2\Big\},
\end{eqnarray*}
		and let $\tau^{(2)}_\var=\infty$ if the set is empty.  
		Note that  $\tau^{(2)}_\var$ captures the infimum time when 
		$X_t,Y_t$  are  separated  (again) by {\it different} basins, where ``again" applies if $X_t$ and $Y_t$ 
        already belong to  different basins at the very beginning.
			
Let 
\begin{eqnarray*}
\tau_\var=\tau^{(2)}_\var\wedge\tau_c. 
\end{eqnarray*}
We note that $\tau_\var=\tau^{(2)}_\var<\tau_c$ if $X_t,Y_t$ are {\it not} coupled while staying in the same basin; otherwise, $\tau_\var=\tau_c$ and $\tau_\var^{(2)}=\infty$.  As will be seen in Section 4.3, the coupling time $\tau_c$ is  $\P$-a.s. a finite iteration of $\tau_\var$.

\subsection{Estimation of $\tau_\var$.}\label{subsec:4.2}
The following assumption {\bf (H2)} is  made in both double and  multi-well settings, characterizing  {\it local} coupling properties when $X_t$ and $Y_t$ lie in the same basin. 
		
		\medskip
		
		\noindent{\bf  (H2)} 
		Let $(X_t,Y_t)$ be  a coupling of two solutions of \eqref{SDE1} such that  $(X_0,Y_0)\in\bigcup_{1\le i\le L}\overline{ B_i\times B_i}$. The  following hold: 
		\medskip
		
		(i) There exists $\ga_1 \in(0,1)$ such that \[\P[X_{\tau_\var}\neq Y_{\tau_\var}]<\ga_1;\]
				
(ii)  For any $\var>0$ sufficiently small, there exists $r_0(\varepsilon)=\mathcal O(-1/\log \varepsilon)>0$ such that
\begin{eqnarray*}
\P[\tau_\var>t]\lesssim e^{-r_0(\var) t},\quad\forall t>0.
	\end{eqnarray*}

\medskip

Assumption {\bf (H2)}(i) asserts that 
when $X_t$ and $Y_t$ belong to the same basin, there is a positive probability of  successful coupling.  Assumption {\bf (H2)}(ii) states that as $\var$ tends to zero, the exponential tail of $\tau_\var$ 
vanishes at the rate $\mathcal O(-1/\log\var)$. This is expected, since in the limiting case
$\var=0,$ one process may be trapped at the saddle point on the boundary and cannot couple with the other one.  The rate $\mathcal{O}(-1/\log \var$) can be derived by explicitly solving the linearized dynamics near the saddle point; see also \cite{monter2011normal, bakhtin2011noisy} for rigorous results on the passage time of a small-noise perturbation of a deterministic dynamical system through a hyperbolic equilibrium.

The following proposition provides sufficient conditions for  
{\bf (H2)}, which will be numerically verified in Section 5.

\begin{prop}\label{pfH2}
Assume that for each $i\in\{1,\ldots,L\}$, the following conditions hold:
\begin{itemize}
\item[(a)] There exist constants $T_0=\mathcal O(-\log \var)$, $\delta > 0$, and $\gamma_0 > 0$ such that $$
\mathbb{P}[(X_{t}, Y_{t}) \in B^i_\delta \times
B^i_\delta,\text{for all}\ h\le t\le T_0\,|\, (X_0,Y_0)\in\overline{ B_i\times B_i}] \ge\gamma_0
$$
is uniform for all $(X_0, Y_{0}) \in\overline{B_i\times B_i}$, where $B^i_{\delta}=\{ x \in B_i \,|\, d(x, \partial B_i) > \delta\}$ denotes the $\delta$-interior of $B_i$;
\item[(b)] There exists 
			a  strongly convex  neighborhood $B^{i}_{c}$  of $x_i$ such that $B^{i}_{c}\subseteq B_i$.
\end{itemize}
		Then Assumption {\bf (H2)} holds. 
	\end{prop}

For the proof of Proposition \ref{pfH2}, refer to Appendix \ref{appendix_2}.	
    
\begin{remark}\label{rem:weaker_condition}
{\rm 
Assumption (b) in Proposition \ref{pfH2} holds if  $U$ has non-vanishing second-order derivatives at the minimum $x_{i}$. Assumption (a) asserts that if the two processes start from the same basin, the probability
that both strictly remain in that basin for an extended period of time of order $\mathcal O(-\log\var)$ is positive. While this is intuitive, a rigorous proof is technically challenging and beyond the scope of this paper. It would require specifying a normal form for $-\nabla U$ near the boundary and analyzing the exit behavior  from the separatrix of the  reflection-coupled processes governed by \eqref{SDE1}. Therefore, we choose to verify this assumption numerically. 
}
\end{remark}
			
The analysis becomes more intricate when $X_t$ and $Y_t$ initially  belong to different basins. The coupling process $(X_t,Y_t)$  typically evolves in two  stages during 
$(h,\tau_\var)$. In Stage 1, $X_t$ and $Y_t$  lie in  different basins until one of them, either $X_t$ or $Y_t,$  jumps out of its initial basin  and enters the other, making both of them stay in the same basin. Then Stage 2  begins, where $X_t$ and $Y_t$ are  in the same basin for a period of time, until  they are either successfully coupled or fail to couple with one of them  jumping out of the basin again. 
Accordingly,  write
\begin{eqnarray}\label{tau_equation}
\tau_\var=\tau_\var\circ\theta^{\tau_\var^{(1)}}+\tau^{(1)}_\var,\quad\P\text{- a.s.}
\end{eqnarray}
where $\tau^{(1)}_\var$ and $\tau_\var\circ\theta^{\tau_\var^{(1)}}$ correspond to the Stage 1 and Stage 2, respectively, and $\theta$ denotes the usual shift operator. 

We note that Stage 1 and Stage 2 exhibit  different time scales: Stage 1  corresponds to a slow time scale,  typically persisting over an exponentially long period,  with the tail exponent diminishing exponentially in terms of $\var$;  Stage 2 is associated with the fast time scale, and as shown in {\bf (H2)},  the  exponent of the tail distribution remains uniformly away from zero, independent of $\var$. 			

Based on the above analysis, we obtain the following estimate of  $\tau_\var$ when $X_t$ and $Y_t$ initially belong to  different basins.    

\begin{lemma}\label{lem:4}
		Let  $(X_t,Y_t)$ be  a  coupling  of two solutions of \eqref{SDE1}
		such that  $(X_0,Y_0)\in \overline{B_1\times  B_2}$ or
		$\overline{B_2\times B_1}.$ Assume  {\bf (H2)}. Then there exists
		$C_1>0$ such that for any $t>0$ and any $\var>0$ sufficiently small, 
		\begin{eqnarray*}
			\P[\tau_\var>t]\le C_1\lambda_\var^{-t}.
		\end{eqnarray*}  
		Consequently, by
		Proposition \ref{prop:prob_generate_fun2}, for any $\la\in(1,\la_{\var}),$
\begin{eqnarray}\label{ineq:13}\E[\lambda^{\tau_\var}]\le g(\lambda; C_1, \la_{\var},1)<\infty.
		\end{eqnarray}
	\end{lemma}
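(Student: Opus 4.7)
The plan is to exploit the strong Markov property at $\tau_\var^{(1)}$ to split the analysis into a slow ``basin-switching'' phase and a fast ``in-basin'' phase, and then to combine the exponential tail estimates for the two phases. Since $(\cX_0,\cY_0)$ lies in distinct basins, continuity of the sample paths gives $\tau_\var>\tau_\var^{(1)}$ $\P$-a.s., so the decomposition \eqref{tau_equation}
$$\tau_\var=\tau_\var^{(1)}+\tau_\var\circ\theta^{\tau_\var^{(1)}}$$
holds. By the strong Markov property, conditional on $\cF_{\tau_\var^{(1)}}$ the residual time $\tau_\var\circ\theta^{\tau_\var^{(1)}}$ is distributed as $\tau_\var$ for a reflection-maximal coupling initialized at $(\cX_{\tau_\var^{(1)}},\cY_{\tau_\var^{(1)}})\in\overline{B_i\times B_i}$ for some $i\in\{1,2\}$.

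For the first phase, \eqref{ineq:11} gives $\P[\tau_\var^{(1)}>s]\le C'\la_\var^{-s}$ with $C'>0$ independent of $\var$ (for $\var$ sufficiently small). For the second phase, Proposition \ref{prop:2} applied to the restarted process yields
$$\P\bigl[\tau_\var\circ\theta^{\tau_\var^{(1)}}>u\,\bigm|\,\cF_{\tau_\var^{(1)}}\bigr]\le C''e^{-r_0u}$$
with $C'',r_0>0$ independent of $\var$. Conditioning on $\cF_{\tau_\var^{(1)}}$ then produces
$$\P[\tau_\var>t]\le \P[\tau_\var^{(1)}>t]+C''e^{-r_0t}\,\E\bigl[e^{r_0\tau_\var^{(1)}}\I_{\{\tau_\var^{(1)}\le t\}}\bigr].$$

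It remains to show that the last summand is dominated by a constant multiple of $\la_\var^{-t}$. I plan to handle it by integration by parts: writing $G(s)=\P[\tau_\var^{(1)}>s]$ and using the tail bound $G(s)\le C'\la_\var^{-s}$ produces
$$e^{-r_0t}\E\bigl[e^{r_0\tau_\var^{(1)}}\I_{\{\tau_\var^{(1)}\le t\}}\bigr]\le e^{-r_0t}+\frac{C'r_0}{r_0-\log\la_\var}\la_\var^{-t}.$$
The key observation, and the main obstacle, is the uniformity in $\var$. Since $\log\la_\var=C_0e^{-2H_U/\var^2}\to 0$ as $\var\to 0$ while $r_0$ is fixed, one has $r_0>\log\la_\var$ for $\var$ sufficiently small, so $e^{-r_0t}\le\la_\var^{-t}$ and the factor $r_0/(r_0-\log\la_\var)$ is bounded by a universal constant. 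Collecting all three contributions then yields $\P[\tau_\var>t]\le C_1\la_\var^{-t}$ with $C_1$ independent of $\var$ and $t$. The hard part is precisely this extraction of a $\var$-uniform constant: one must exploit the separation of the two exponential time scales so that the fast in-basin dynamics contribute only a bounded multiplicative factor rather than corrupting the leading $\la_\var^{-t}$ decay. The bound $\E[\la^{\tau_\var}]\le g(\la;C_1,\la_\var)$ for $\la\in(1,\la_\var)$ in \eqref{ineq:13} is then immediate from Proposition \ref{prop:prob_generate_fun2}.
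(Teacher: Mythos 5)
Your proposal is correct and follows essentially the same route as the paper: decompose at $\tau_\var^{(1)}$ via \eqref{tau_equation}, bound the slow phase by \eqref{ineq:11}, bound the restarted in-basin phase by Proposition \ref{prop:2} through the strong Markov property, and conclude from the time-scale separation $\log\la_\var\to0$ versus the fixed rate $r_0$. The only (harmless) difference is bookkeeping: you control the convolution of the two tails by conditioning on $\cF_{\tau_\var^{(1)}}$ and a Stieltjes integration by parts on $\P[\tau_\var^{(1)}>s]$, whereas the paper integrates against the law of $\tau_\var^{(1)}$ with a small $\delta,\delta'$ regularization; both reduce to the same bounded geometric factor once $r_0>\log\la_\var$.
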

\begin{proof}				
According to \eqref{tau_equation},
 \begin{eqnarray}\label{ineq:8}
\P[\tau_\var>t]&=&\int_h^t\P[\tau_\var^{(1)}=s]\P[\tau_\var>t|\tau_\var^{(1)}=s]ds\nonumber\\
&\le&\int_h^t\P[\tau_\var^{(1)}>{s-\de}]\P[\tau_\var\circ\theta^s>t-s]ds
\end{eqnarray}
where $\delta\in(0,h)$ is sufficiently small.  
By \eqref{ineq:11}, there exists a constant $C_2>0$ such that
\begin{eqnarray*}
\P[\tau_\var^{(1)}>s-\de]\le C_2\lambda_\var^{-(s-\de)}.
\end{eqnarray*}
Moreover, since $X_t, Y_t$ belong to the same basin at  $t=\tau_\var^{(1)},$ it follows from {\bf (H2)}(ii) that there exists a constant $C_3>0$ such that 
\begin{eqnarray*}
\P[\tau_\var\circ\theta^s>t-s]
\le  C_3e^{-r_0(\var)(t-s)}. 
\end{eqnarray*}
Since $\delta$ is arbitrarily small, it follows that
\begin{eqnarray*}
\eqref{ineq:8}\
\le C_1\la_\var^{-t}\int_0^t\big(\la_\var e^{-r_0(\var)}\big)^{t-s}ds,
\end{eqnarray*}
where $C_1>0$ is a constant  independent of $t$ and $\var.$
				
Note that $\la_\var e^{-r_0(\var)}<1$ for $\var>0$ sufficiently small. Thus, by enlarging $C_1$ if necessary, 
\begin{eqnarray*}
\eqref{ineq:8}\le C_1\la_\var^{-t}.
\end{eqnarray*}
The lemma is proved.
\end{proof}

Combining {\bf(H2)}(ii) and 
Lemma \ref{lem:4},  we immediately obtain the following result.

\begin{prop}\label{lem:5}
Let $(X_t,Y_t)$ be a  coupling of two solutions of \eqref{SDE1} such that $(X_0,Y_0)$ is fully supported. 
Assume  {\bf (H2)}. 
Then for any $\var>0$ sufficiently small and  any $\la\in(1,\la_\var),$
\begin{eqnarray*}
\E[\la^{\tau_\var}]<\infty. 
\end{eqnarray*}
\end{prop}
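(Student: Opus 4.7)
The plan is to reduce Proposition \ref{lem:5} to the two already-established cases distinguished by the initial basin configuration of $(\cX_0,\cY_0)$, using the crucial fact that $\la_\var\to 1$ as $\var\to 0$ while $r_0>0$ is a fixed constant, so that $\la_\var<e^{r_0}$ for all $\var$ sufficiently small and a single $\la\in(1,\la_\var)$ works simultaneously for both cases.

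First I would decompose $D\times D=\bigcup_{i,j\in\{1,2\}}\overline{B_i\times B_j}$ (the measure-zero overlap on basin boundaries is harmless) and condition on the event $E_{ij}=\{(\cX_0,\cY_0)\in\overline{B_i\times B_j}\}$. For the diagonal cases $E_{11}$ and $E_{22}$, Proposition \ref{prop:2} yields $\P[\tau_\var>t\mid E_{ii}]\lesssim e^{-r_0 t}$, so Proposition \ref{prop:prob_generate_fun2} gives a constant $C_2>0$ (independent of $\var$) with
\[\E[\la^{\tau_\var}\mid E_{ii}]\le g(\la;C_2,e^{r_0})<\infty\quad\text{for every }\la\in(1,e^{r_0}).\]
For the off-diagonal cases $E_{12}$ and $E_{21}$, Lemma \ref{lem:4} directly gives
\[\E[\la^{\tau_\var}\mid E_{ij}]\le g(\la;C_1,\la_\var)<\infty\quad\text{for every }\la\in(1,\la_\var).\]

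Next I would choose $\var_0>0$ small enough that $\la_\var<e^{r_0}$ for all $\var\in(0,\var_0)$; such a threshold exists by the definition \eqref{la_var} of $\la_\var$, which tends to $1$ as $\var\to 0$. Then for any $\la\in(1,\la_\var)$ both bounds above apply simultaneously, and the law of total expectation gives
\[\E[\la^{\tau_\var}]=\sum_{i,j\in\{1,2\}}\P[E_{ij}]\,\E[\la^{\tau_\var}\mid E_{ij}]<\infty,\]
which is the claim. (The full-support hypothesis on $(\cX_0,\cY_0)$ plays no quantitative role here; it is carried along only because it is the standing assumption of Theorem \ref{thm:2}.)

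The main technical point, rather than a genuine obstacle, is verifying that the constants hidden in the ``$\lesssim$'' of Proposition \ref{prop:2} and Lemma \ref{lem:4} are uniform over all starting points inside the corresponding product of basins, not merely valid for a single initial point. This uniformity is built into the phrasing of hypotheses \textbf{(H1)}--\textbf{(H2)} and is implicit in the proofs of Proposition \ref{prop:2} and Lemma \ref{lem:4}, so the four conditional expectations really are bounded by constants independent of the precise initial distribution, and the decomposition above closes the argument.
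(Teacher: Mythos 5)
Your argument is correct and is essentially the paper's own: the paper states that Proposition \ref{prop:2} and Lemma \ref{lem:4} ``directly yield'' the result once one notes $1<\la_\var<e^{r_0}$ for $\var$ small, and your decomposition of the initial condition into same-basin and different-basin cases followed by Proposition \ref{prop:prob_generate_fun2} and the law of total expectation simply spells out that implicit step. Your added remarks on the uniformity of constants and the inessential role of the full-support hypothesis are consistent with the paper.
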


\subsection{Proof of Theorem \ref{thm:2}}
Let $U$ be a double-well potential and $(X_t,Y_t)$ be a  coupling of two solutions of \eqref{SDE1}. In this section,  the coupling $(X_t,Y_t)$ is assumed to satisfy {\bf (H1)}-{\bf (H2)}. As in the proof of Theorem \ref{thm:1},  a sequence of random times is defined inductively as \begin{eqnarray}\label{iteration}
				\tau_\var^{0}=0,\  \tau_\var^{k}=\tau_\var^{k-1}+\tau_\var\circ\theta^{\tau_\var^{k-1}},\quad k\ge1.
			\end{eqnarray}
			where $\theta$ is the usual shift operator. Note that by the definition of $\tau_\var,$ for each $k\ge1,$ either $X_{\tau_\var^k}=Y_{\tau_\var^k},$  or $X_{\tau_\var^k}$ and $Y_{\tau_\var^k}$ belong to    different basins. Let
			\begin{eqnarray}\label{eta}
				\eta=\inf\{k\ge1:X_{\tau_\var^k}=Y_{\tau_\var^k}\}. 
			\end{eqnarray}
			
			\medskip
			
			The following Proposition \ref{prop:5} and Theorem \ref{thm:4} are   analogues of Proposition \ref{prop:1} and Theorem \ref{thm:3}, respectively,  in the  double-well setting.
			\begin{prop}\label{prop:5}
				For any $\var>0$ sufficiently small, the following hold:
				
				\quad 
				(i) $X_{\tau_\var^k}=Y_{\tau_\var^k}$ if and only if $k\ge\eta;$ %\quad (ii) $\tau_c=\tau_\var^\eta;$ 
				
				\quad (ii) For any $\var>0$ sufficiently small, it holds that 
				\[\P[X_{\tau_\var^k}\neq Y_{\tau_\var^k}|\cF_{\tau_\var^{k-1}}]<\ga_1,\quad\forall k\ge1,\]
				where $\ga_1\in(0,1)$ is as in {\bf (H2)}. 
			\end{prop}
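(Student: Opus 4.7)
The plan is to address parts (i) and (ii) separately. Part (i) is essentially definitional, flowing from the minimality of $\eta$ together with the ``merge-forever'' property built into any coupling; part (ii) reduces, via the strong Markov property applied at $\tau_\var^{k-1}$, to Hypothesis (H2)(i).

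For (i), the ``only if'' direction is immediate from minimality: if $\cX_{\tau_\var^k}=\cY_{\tau_\var^k}$, then $k$ lies in the set whose infimum is $\eta$, so $k \geq \eta$. For the ``if'' direction, note that $\tau_\var \geq 0$ makes the sequence $\{\tau_\var^k\}$ non-decreasing, so $\tau_\var^k \geq \tau_\var^\eta$ whenever $k \geq \eta$; by the defining property of any coupling (once $\cX_s = \cY_s$, the processes agree at every $t \geq s$) applied at $s = \tau_\var^\eta$, we obtain $\cX_{\tau_\var^k} = \cY_{\tau_\var^k}$. In fact one sees that $\tau_c \circ \theta^{\tau_\var^\eta} = 0$, so $\tau_\var^k = \tau_\var^\eta$ for all $k \geq \eta$, which is a cleaner way to see the conclusion.

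For (ii), I would condition on $\cF_{\tau_\var^{k-1}}$ and split on the configuration of $(\cX_{\tau_\var^{k-1}}, \cY_{\tau_\var^{k-1}})$. On $\{\cX_{\tau_\var^{k-1}} = \cY_{\tau_\var^{k-1}}\}$, part (i) forces $\cX_{\tau_\var^k} = \cY_{\tau_\var^k}$ almost surely, contributing zero to the conditional probability. On the complementary event, for $k \geq 2$ the construction $\tau_\var = \tau^{(2)}_\var \wedge \tau_c$ forces the pair to lie in $B_1 \times B_2$ or $B_2 \times B_1$, since the only non-coupled realization of $\tau_\var$ must be attained via $\tau^{(2)}_\var$ and by the definition of $\tau^{(2)}_\var$ this places the two processes in opposite basins. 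Invoking the strong Markov property at $\tau_\var^{k-1}$, the post-$\tau_\var^{k-1}$ evolution is a reflection-maximal coupling starting from opposite basins, and the decomposition \eqref{tau_equation} writes $\tau_\var \circ \theta^{\tau_\var^{k-1}} = \tau^{(1)}_\var \circ \theta^{\tau_\var^{k-1}} + \tau_\var \circ \theta^{\tau_\var^{k-1} + \tau^{(1)}_\var \circ \theta^{\tau_\var^{k-1}}}$. A second application of the strong Markov property at this intermediate stopping time (the first reunion in a common basin) places us in the setting of Hypothesis (H2)(i), which yields $\P[\cX_{\tau_\var^k} \neq \cY_{\tau_\var^k} \mid \cF_{\tau_\var^{k-1}}] < \ga_1$. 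The boundary case $k = 1$ with the initial pair already non-coupled in a common basin is handled by a single direct application of (H2)(i).

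I do not anticipate any genuine obstacle: the entire argument is strong-Markov bookkeeping. The only point requiring care is verifying that on the non-coupled event the pair $(\cX_{\tau_\var^{k-1}}, \cY_{\tau_\var^{k-1}})$ indeed lies in opposite basins for $k \geq 2$, which is an immediate consequence of the definition of $\tau^{(2)}_\var$ but worth stating explicitly before invoking (H2)(i).
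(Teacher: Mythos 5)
Your proof is correct and takes essentially the same route as the paper, whose own argument is just the two-sentence remark that (i) is plain and that (ii) follows from the strong Markov property together with {\bf (H2)}(i). The extra care you take for $k\ge2$ — noting that a non-coupled pair at $\tau_\var^{k-1}$ must lie in opposite basins, then using the decomposition \eqref{tau_equation} and a second application of the strong Markov property at the reunion time $\tau_\var^{(1)}$ so that {\bf (H2)}(i) (stated only for same-basin starts) becomes applicable — is precisely the detail the paper compresses into ``directly follows.''
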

			Note that  Proposition \ref{prop:5} (i)  yields  
			\[\tau_c=\tau_\var^\eta,\quad\text{$\P$-a.s.}\]
			Proposition \ref{prop:5} (ii)  directly follows from {\bf (H2)}(i) by the strong Markov property.

			\begin{theorem}\label{thm:4}
				Let $(X_t,Y_t)$ be a  coupling of two solutions of \eqref{SDE1} such that  $(X_0,Y_0)$ is fully supported. 
				Assume  {\bf (H2)}. 
				Then  for any $\var>0$ sufficiently small and any $\la\in(1,\la_\var),$ it holds that
\begin{eqnarray*}
\E[\la^{\tau_\var^\eta}]<\infty.
\end{eqnarray*}				
\end{theorem}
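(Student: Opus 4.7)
The plan is to mirror the iterative argument used in the proof of Theorem \ref{thm:3}, with $\tau_h$ replaced by the basin-separation stopping time $\tau_\var$, Proposition \ref{prop:1} replaced by Proposition \ref{prop:5}, and Lemma \ref{lem:2} replaced by Lemma \ref{lem:4}. First I would telescope
\begin{eqnarray*}
\E[\la^{\tau_\var^\eta}] \le \sum_{k=1}^\infty \E[\I_{\{\eta\ge k\}}\la^{\tau_\var^k}]
= \E[\I_{\{\eta\ge 1\}}\la^{\tau_\var}] + \sum_{k=2}^\infty \E\big[\I_{\{\eta\ge k\}}\la^{\tau_\var^{k-1}}\E[\la^{\tau_\var\circ\theta^{\tau_\var^{k-1}}}\mid \cF_{\tau_\var^{k-1}}]\big],
\end{eqnarray*}
using that $\la^{\tau_\var^{k-1}}$ and $\{\eta\ge k\}$ are $\cF_{\tau_\var^{k-1}}$-measurable. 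On $\{\eta\ge k\}$, Proposition \ref{prop:5}(i) forces $\cX_{\tau_\var^{k-1}}$ and $\cY_{\tau_\var^{k-1}}$ to lie in distinct basins, so by the strong Markov property and Lemma \ref{lem:4}, the inner conditional expectation is at most $g(\la;C_1,\la_\var)$.

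Next I would control $\E[\I_{\{\eta\ge k\}}\la^{\tau_\var^{k-1}}]$ by an inductive H\"older argument in the spirit of Theorem \ref{thm:3}. Writing $\I_{\{\eta\ge k\}}=\I_{\{\eta\ge k-1\}}\I_{\{\cX_{\tau_\var^{k-1}}\ne \cY_{\tau_\var^{k-1}}\}}$ and conditioning on $\cF_{\tau_\var^{k-2}}$, H\"older's inequality with $p\in(0,1)$ gives
\begin{eqnarray*}
\E\big[\I_{\{\cX_{\tau_\var}\ne\cY_{\tau_\var}\}}\la^{\tau_\var}\mid\cF_{\tau_\var^{k-2}}\big] \le \P[\cX_{\tau_\var}\ne\cY_{\tau_\var}\mid\cF_{\tau_\var^{k-2}}]^{1-p}\,\E[\la^{\tau_\var/p}\mid\cF_{\tau_\var^{k-2}}]^p,
\end{eqnarray*}
where Proposition \ref{prop:5}(ii) bounds the first factor by $\ga_1^{1-p}$ and Lemma \ref{lem:4} bounds the second by $g(\la^{1/p};C_1,\la_\var)^p$. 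Induction on $k$ then yields
\begin{eqnarray*}
\E[\I_{\{\eta\ge k\}}\la^{\tau_\var^{k-1}}] \le \big(\ga_1^{1-p}g(\la^{1/p};C_1,\la_\var)^p\big)^{k-2}\E[\la^{\tau_\var}], \quad k\ge 2,
\end{eqnarray*}
and summing the resulting geometric series produces
\begin{eqnarray*}
\E[\la^{\tau_\var^\eta}] \le \E[\la^{\tau_\var}]\bigg(1+\frac{g(\la;C_1,\la_\var)}{1-\ga_1^{1-p}g(\la^{1/p};C_1,\la_\var)^p}\bigg),
\end{eqnarray*}
provided the denominator is positive. Since Proposition \ref{lem:5} already guarantees $\E[\la^{\tau_\var}]<\infty$ for every $\la\in(1,\la_\var)$, the theorem follows from the convergence of this geometric series.

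The main obstacle is verifying the condition $g(\la^{1/p};C_1,\la_\var)<\ga_1^{-(1-p)/p}$. By Proposition \ref{prop:prob_generate_fun1} this is guaranteed whenever $\la^{1/p}\in(1,1+\be(\la_\var-1))$ for an explicit $\be\in(0,1)$ depending on $\ga_1$, $C_1$, $p$ and $\la_\var$. Unlike the single-well setting, where the free parameter $h$ could be sent to $0$ to drive the analogous constant to $1$, here $C_1$ is fixed and $\la_\var-1$ is itself exponentially small in $\var^{-2}$. The delicate step will be to select $p$ close to $1$---so that $\ga_1^{-(1-p)/p}$ and $\la^{1/p}$ are both close to their $p=1$ values---and exploit the smallness of $\la_\var-1$ to arrange, for every $\la\in(1,\la_\var)$ and every $\var>0$ sufficiently small, that both factors $(1+\tilde\be(\la_\var-1))$ and $(1+C_1\tilde\be/(1-\tilde\be))$ appearing in the explicit bound on $g$ satisfy the required inequality. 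This balancing of $p$ and $\var$ is the key new element relative to the single-well proof.
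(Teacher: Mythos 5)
Your skeleton is exactly the paper's argument: the same telescoping of $\E[\la^{\tau_\var^\eta}]$ over $k$, the same use of Proposition \ref{prop:5} and Lemma \ref{lem:4} through a H\"older inequality with exponent $p\in(0,1)$, the same geometric-series bound, and the same criterion $g(\la^{1/p};C_1,\la_\var)<\ga_1^{-(1-p)/p}$. The gap is in the last step, which you rightly single out as the delicate one but resolve incorrectly. Writing $\la^{1/p}=1+\tilde\be(\la_\var-1)$, the computation behind Proposition \ref{prop:prob_generate_fun1} gives $g(\la^{1/p};C_1,\la_\var)=\la^{1/p}\bigl(1+C_1\tilde\be/(1-\tilde\be)\bigr)\ge 1+C_1\tilde\be/(1-\tilde\be)$, and this lower bound depends on $\la$ and $\la_\var$ only through the ratio $\tilde\be$; so the smallness of $\la_\var-1$ buys you nothing. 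Sending $p\to1$ is also the wrong direction: the threshold $\ga_1^{-(1-p)/p}$ then tends to $1$ while $g$ stays bounded away from $1$. Worse, for $\la$ close to $\la_\var$ no choice of $p$ can work at all: finiteness of $\E[\la^{\tau_\var/p}]$ via Lemma \ref{lem:4} and Proposition \ref{prop:prob_generate_fun2} requires $\la^{1/p}<\la_\var$, i.e.\ $p>\log\la/\log\la_\var$, which forces $p$ (and hence the threshold's closeness to $1$) and $\tilde\be$ (and hence the blow-up of $g$) toward $1$ simultaneously. So the claim that a balancing of $p$ and $\var$ covers \emph{every} $\la\in(1,\la_\var)$ with the constant $C_0$ in $\la_\var$ held fixed is false.

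The correct resolution, which is what the paper does (tersely) at the end of its proof, is to concede a constant: fix, say, $p=1/2$ and $\tilde\be$ small enough that $(1+\tilde\be)\bigl(1+C_1\tilde\be/(1-\tilde\be)\bigr)<\ga_1^{-1}$; this yields $\E[\la^{\tau_\var^\eta}]<\infty$ for all $\la\in(1,\tilde\la_\var)$ with $\tilde\la_\var=\exp\{\tilde C_0 e^{-2H_U/\var^2}\}$, where $\tilde C_0\le C_0$ is another $\var$-independent constant determined by $C_1$ and $\ga_1$. Since $\la_\var$ is only specified up to the choice of its constant, and since the conclusion of Theorem \ref{thm:2} is an estimate up to ``$\simeq$'' (multiplicative constants independent of $\var$), this loss is harmless: one still gets $\P[\tau_c>t]\lesssim\tilde\la_\var^{-t}$ with exponent of order $e^{-2H_U/\var^2}$. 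With that adjustment in place of your final paragraph, your proof coincides with the paper's.
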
 
			\begin{proof}
				The proof follows the same approach as that of Theorem \ref{thm:3}.  By replacing  $\I_{\{R_{\tau_\var^k>0}\}}$ in the proof of Theorem \ref{thm:3} with $\I_{\{X_{\tau_\var^k}\neq Y_{\tau_\var^k}\}},$ we have
\begin{eqnarray*}
\E[\lambda^{\tau_\var^\eta}]\le\E[\la^{\tau_\var}]\Big(1+ g(\lambda;C_1,\la_\var,1)\sum_{k=0}^\infty \Big(\ga_1^{1-p}g(\lambda^{1/p}; C_1,\la_\var,1)^{p}\Big)^k\Big),
\end{eqnarray*}
				where $C_1>0$ is the constant  given  in Lemma \ref{lem:4}, $\gamma_1$ is as in Proposition \ref{prop:5}(ii),  
				and $p\in(0,1)$ is an arbitrary  number.  Thus, $\E[\lambda^{\tau_\var^\eta}]<\infty$ holds if the inequality
				\begin{eqnarray}\label{ineq:3}
					g(\la^{1/p};C_1,\la_\var,1)<\ga_1^{-(1-p)/p}
				\end{eqnarray}
is satisfied.

By Proposition \ref{prop:prob_generate_fun1},
\eqref{ineq:3} holds for any $\lambda>1$ satisfying 
$\la^{1/p}\in(1,1+\beta(\la_\var-1)),$ 
where $\beta=\min\{1,\beta^*\}$ and $\beta^*$ is given by \eqref{beta1}. 
Note that 
\begin{eqnarray*}
   \beta^*\to\ga_1^{-(1-p)/p}-1,\quad\text{as $\var\to0.$}
\end{eqnarray*} 
Since $\ga_1^{-(1-p)/p}-1>0$ and diverges to infinity as $p\to0,$ one can choose $p\in(0,1)$ such that $\ga_1^{-(1-p)/p}-1>1.$ It then follows that $\beta^*>1$, and hence $\beta=1$, for any sufficiently small $\var>0$. 
Hence, $\lambda^{1/p}$ can be arbitrarily close to $\lambda_\var.$ 
By the definition of $\lambda_\var$ in \eqref{la_var},  it follows that
\begin{eqnarray*}
  \ln\la\simeq C_\var e^{-2H_U/\var^2}.
\end{eqnarray*}
\end{proof}
			
			\medskip
			
			\begin{proof}[Proof of Theorem \ref{thm:2}:]						
				Note that for any $\la>1$ satisfying $\E[\la^{	\tau_\var^\eta}]<\infty,$
				it holds that %satisfying $\E[\la^{	\tau_\var^\eta}]<\infty,$ it holds that for any $t>0,$
				\begin{eqnarray*}
					\P[\tau_\var^\eta>t]\la^t\le\E[\la^{\tau_\var^\eta}].
				\end{eqnarray*}
				 It then follows from Proposition \ref{prop:5} (i) and  Theorem \ref{thm:4}  
				that for any $t>0$ and $\var>0$ sufficiently small, 
				%for  $\la=\exp\{\tilde C_0e^{-2H_U/\var^2}\},$
				\begin{eqnarray*}
					\P[\tau_c>t]\lesssim \la^{-t},
				\end{eqnarray*}
                where $\lambda$ satisfies $\ln\la\simeq C_\var e^{-2H_U/\var^2}.$
				Hence, 
			\begin{eqnarray}\label{less1}
					\limsup_{t\to\infty}\dfrac{1}{t}
					\log\P[\tau_c>t]\lesssim-\ln\lambda\simeq -C_{\var}e^{-2H_U/{\var^2}}. 
			\end{eqnarray} 
				
For the reverse inequality,
since $(X_0,Y_0)$ is fully supported, it follows that for any $\de>0$ sufficiently small, 
\begin{eqnarray*}
					\P[\tau_c>t]\ge\P[\tau_c>t, (X_0,Y_0)\in  B_\de(x_1)\times B_\de(x_2)\  \mbox{or}\ B_\de(x_2)\times B_\de(x_1)].
					%= \P[\tau_\var^{(1)}>t, (X_0,Y_0)\in B_1\times B_1\cup B_2\times B_2] 
				\end{eqnarray*}
				Note that when $X_0$ and $Y_0$ belong to different basins, it holds that  $\tau_c\ge\tau^{(1)}_\var.$ Thus, by \eqref{ineq:15}
\begin{eqnarray*}
&&\P[\tau_c>t, (X_0,Y_0)\in  B_\de(x_1)\times B_\de(x_2)\  \mbox{or}\ B_\de(x_2)\times B_\de(x_1)]\\
&\ge&\P[\tau_\var^{(1)}>t, (X_0,Y_0)\in  B_\de(x_1)\times B_\de(x_2)\  \mbox{or}\ B_\de(x_2)\times B_\de(x_1)]
\  \gtrsim\ \la_\var^{-t},
\end{eqnarray*}
and hence
\begin{eqnarray}\label{greater2}
\limsup_{t\to\infty}\dfrac{1}{t}\log\P[\tau_c>t]\gtrsim -C_{\var}e^{-2H_U/{\var^2}}.
\end{eqnarray} 
Combining \eqref{less1} and \eqref{greater2} completes the proof of Theorem \ref{thm:2}. 
\end{proof}

\subsection{Multi-well potential and proof of Theorem \ref{thm:5}}
In this section, we study the general case of multi-well potentials. Let  $U$ be a multi-well  potential satisfying {\bf (U3)},   
 with $L$$(L>2)$ local minima $x_1,\ldots,x_L$ and the corresponding basins $B_1,\ldots,B_L$. 
Let   $(X_t,Y_t)$ be a coupling  of two solutions of \eqref{SDE1}. 
			
Similar to the double-well case, several key stopping times need to be defined to 
estimate the coupling time. By a slight abuse of notation,  we continue to use  $\tau_\var^{(1)}$  to denote the infimum time at which $X_t$ and $Y_t$ lie in the same basin, i.e.,
			\begin{eqnarray*}
				\tau^{(1)}_\var=\inf\Big\{t>h:(X_t,Y_t)\in\cup_{1\le i\le L} B_i\times B_i\Big\}.
			\end{eqnarray*}
			Define
			\begin{eqnarray*}
				\tau_\var^{(3)}=&\inf&\Big\{t>h:(X_t,Y_t) \in\bigcup\nolimits_{\substack{1\le i,j\le L,i\neq j}}B_i\times B_j,\mbox{and for some}\ s\in(h,t),\\
				&&(X_s,Y_s)\in 
				\bigcup\nolimits_{1\le i\le L}B_i\times B_i \Big\},
			\end{eqnarray*}				
			and let $\tau_\var^{(3)}=\infty$ if the set is empty. 
Note that $\tau_\var^{(3)}$ generalizes $\tau_\var^{(2)}$ to the case of multiple wells and coincides with  $\tau_\var^{(2)}$ when $L=2.$	
			
In the multi-well setting, a key stopping time of interest is when both $X_t$ and $Y_t$ lie in the vicinity of the (unique) global minimum  $x_1.$	Let $\xi_1$ denote the infimum time at which  both $X_t$ and $Y_t$ lie in the basin  $B_1$.
Recall,  as defined in \eqref{kappa},  that $\ka_{\bs X}(B_1)$ ({\it resp.} $\ka_{\bs Y}(B_1)$) denotes the infimum time at which $X_t$ ({\it resp.} $Y_t$) enters $B_1$.  Then 
\begin{eqnarray}\label{ineq:17}
				\xi_1\ge \max\big\{\ka_{\bs X}(B_1),  \ka_{\bs Y}(B_1)\big\}.
\end{eqnarray}
			
Note that  $X_t$ and $Y_t$ may enter  and exit the basin $B_1$ multiple times  before $\xi_1.$ However, as long as $\var$ is sufficiently small, the typical  scenario is that  one of the two processes, say $X_t,$ first enters $B_1$ and ``waits" for $Y_t$ to arrive. Although $X_t$ may leave $B_1$ before $Y_t$ enters, it is highly probable  that $X_t$ will stay in  nearby basins and return to $B_1$ shortly after $Y_t$ enters. 

\medskip

The following {\bf (H3)} assumes that $\xi_1$ is no greater than  $\ka_{\bs X}(B_1)$ (or $\ka_{\bs Y}(B_1)$) up to an infinitesimal of the same order as  $\ka_{\bs X}(B_1)$ (or $\ka_{\bs Y}(B_1)$). 
 	
  \medskip
			
\noindent{\bf (H3)} 
Let $(X_t,Y_t)$ be a coupling of two solutions of \eqref{SDE1} such that  $(X_0, Y_0)\in\bigcup\nolimits_{1\le i,j\le L, i\neq j} \overline{B_i\times  B_j}.$ Then for any $\var>0$ sufficiently small, 			\begin{eqnarray}\label{ineq:18}
				\limsup_{t\to\infty}\dfrac{1}{t}\log\P\big[\big(\xi_1-\max\big\{\ka_{\bs X}(B_1),\ka_{\bs Y}(B_1)\big\}\big)>t\big]\lesssim e^{-2H_U/\var^2}.
			\end{eqnarray}
   \medskip

We note that in contrast to  {\bf (H2)}, which provides a local characterization of the coupling properties between $X_t$ and $Y_t$, {\bf(H3)} imposes a {\it global} condition on the coupling between $X_t$ and $Y_t$ as  both processes evolve across  the entire potential landscape. 
In Section 5.4, {\bf (H3)} is numerically verified for the reflection-maximal coupling scheme. 

\begin{remark}
{\rm 
    A rigorous justification of {\bf (H3)} is highly challenging, as it requires estimating the simultaneous hitting time, i.e., $\xi_1$, of the coupled process $(X_t, Y_t)$. Although there are some results on the simultaneous hitting time of independent processes \cite{lindvall_book}, to the best of knowledge of the authors, no such result exists for two reflection-coupled stochastic differential equations.
}
\end{remark}

For the multi-well case with $L>2$, {\bf (H1)}-{\bf (H3)} are  assumed. 
As in the double-well case, the quantity $\la_\var$ is defined as
\begin{eqnarray*}
	\la_\var=\exp\{C_\var  e^{-2H_U/\var^2}\},
\end{eqnarray*}
where $H_U$ now represents the essential barrier height in the general form \eqref{barrier_height_multi-well}, applicable to multi-well potentials. Still,  $C_\var>0$ is any constant,  not uniquely determined,  such that $\lim_{\var\to0}C_\var$ exists and depends only on $U.$ 

 Under assumption {\bf (H3)} and the initial condition $(X_0,Y_0)\in$ $\bigcup\nolimits_{1\le i,j\le L, i\neq j}\overline{B_i\times B_j},$
Lemma \ref{lem:expon_tail_multi_well} implies 
\begin{eqnarray*}
	\P[\xi_1>t]\lesssim \la_\var^{-t},
\end{eqnarray*}
which, since $\tau^{(1)}_\var\le\xi_1$,  further yields 
\begin{eqnarray}\label{ineq:22}						\P[\tau_\var^{(1)}>t]\lesssim\la_\var^{-t}.	
\end{eqnarray}

Still, similar to the double-well case, we denote
\begin{eqnarray*}
	\tau_\var=\tau_\var^{(3)}\wedge\tau_c.
\end{eqnarray*}   
The estimation of $\tau_\var$ follows the same reasoning as in the double-well case: If $X_t$ and $Y_t$ initially belong to the same basin, the result  directly follows from {\bf (H2)}(ii). If $X_t$ and $Y_t$ initially belong to different basins,  the  coupling process  $(X_t,Y_t)$ is typically decomposed into two stages over the time interval $(h,\tau_\var)$. In Stage 1, $X_t$ and $Y_t$ remain in different basins until  $\tau_\var^{(1)}$, at which time they are in the same basin.  Stage 2 then follows, during which  $X_t$ and $Y_t$ are either successfully coupled within the same basin or remain  uncoupled before being separated again by different basins. 
Hence, similar to  \eqref{tau_equation} for the double-well case, it  holds for the multi-well case as well that
	\begin{eqnarray*}
\tau_\var=\tau_\var\circ\theta^{\tau_\var^{(1)}}+\tau^{(1)}_\var,\quad\P\text{- a.s.}
	\end{eqnarray*}
where $\tau_\var^{(1)}$ and $\tau_\var$ are  defined in the setting of multi-well potential. 
 
\medskip

The following is the ``multi-well version"  of Lemma \ref{lem:4}
for the general case of $L\ge2$.
				
\begin{lemma}\label{lem:multi_well}
Let $(X_t,Y_t)$ be a coupling  of two solutions of \eqref{SDE1} such that   $(X_0,Y_0)\in\bigcup_{1\le i,j\le L, i\neq j} \overline{B_i\times  B_j}.$ Assume {\bf (H2)-(H3)}.
Then for any $t>0,$
\begin{eqnarray*}
\P[\tau_\var>t]\lesssim\la_\var^{-t}.					\end{eqnarray*}	
\end{lemma}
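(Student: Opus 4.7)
The plan is to follow the same two-stage argument used in the proof of Lemma \ref{lem:4}, but with the two-basin bound on $\tau_\var^{(1)}$ replaced by the multi-well bound \eqref{ineq:22} and with Proposition \ref{prop:2} applied to the generic ``same basin'' scenario (its proof depends only on {\bf (H2)}, which in the multi-well setting covers all of $\bigcup_{1\le i\le L}\overline{B_i\times B_i}$).

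First I would decompose $\tau_\var$ along the two natural stages of the coupling evolution. Since $(\cX_0,\cY_0)$ starts in $\overline{B_i\times B_j}$ for some $i\neq j$, the processes must first meet in a common basin before $\tau_\var$ can be realized (either by successful coupling or by separating again into different basins). Hence $\tau_\var=\tau_\var^{(1)}+\tau_\var\circ\theta^{\tau_\var^{(1)}}$ almost surely, and
\begin{eqnarray*}
\P[\tau_\var>t]=\int_0^t \P[\tau_\var^{(1)}\in ds]\,\P[\tau_\var\circ\theta^s>t-s\mid \tau_\var^{(1)}=s].
\end{eqnarray*}
As in the double-well case I would absorb the (negligible) probability $\P[\tau_\var^{(1)}<\delta]$ into an arbitrarily small additive error $\delta'$ and thus restrict the integration to $s\in[\delta,t]$.

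Next I would bound each factor separately. For the first factor, the multi-well estimate \eqref{ineq:22} (which itself rests on {\bf (H1)}, {\bf (H3)} together with Lemma \ref{lem:expon_tail_multi_well}) gives $\P[\tau_\var^{(1)}>s-\delta]\lesssim \la_\var^{-(s-\delta)}$, independently of which pair of distinct basins is involved. For the second factor, at time $\tau_\var^{(1)}$ the two processes lie in the same basin (some $B_i$), so by the strong Markov property and Proposition \ref{prop:2} applied at $s$, one has $\P[\tau_\var\circ\theta^s>t-s\mid\tau_\var^{(1)}=s]\lesssim e^{-r_0(t-s)}$ uniformly in the choice of $B_i$.

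Finally, I would combine the two bounds to get
\begin{eqnarray*}
\P[\tau_\var>t]\lesssim \la_\var^{\delta}\int_\delta^t \la_\var^{-s}e^{-r_0(t-s)}\,ds+\delta'\lesssim \la_\var^{-t}\int_0^t (\la_\var e^{-r_0})^{t-s}\,ds+\delta'.
\end{eqnarray*}
Since $\la_\var=\exp\{C_0 e^{-2H_U/\var^2}\}\to 1$ as $\var\to 0$ while $r_0>0$ is fixed, for all sufficiently small $\var$ we have $\la_\var e^{-r_0}<1$, so the integral is $O(1)$ and the desired estimate $\P[\tau_\var>t]\lesssim \la_\var^{-t}$ follows (letting $\delta'\to 0$). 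The main conceptual obstacle, compared to the double-well case, is ensuring that {\bf (H1)} and {\bf (H3)} really give the same exponential tail $\la_\var^{-t}$ for $\tau_\var^{(1)}$ uniformly over every initial pair of distinct basins; this is precisely the content of the strengthened hitting-time bound \eqref{eq:10} and {\bf (H3)}, which reduce the multi-well problem to the essential barrier $H_U$ rather than a collection of pairwise barriers.
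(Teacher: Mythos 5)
Your proposal is correct and follows essentially the route the paper intends: the paper omits the proof of Lemma \ref{lem:multi_well} precisely because it is the convolution argument of Lemma \ref{lem:4} with the Stage I bound replaced by the multi-well estimate \eqref{ineq:22} (coming from {\bf (H3)} and Lemma \ref{lem:expon_tail_multi_well}) and Stage II handled by Proposition \ref{prop:2} via {\bf (H2)}, exactly as you do. Your closing remarks about {\bf (H1)} and \eqref{eq:10} are harmless but not needed here, since those ingredients enter only the matching lower bound in the proof of Theorem \ref{thm:5}, not this upper-bound lemma.
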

\begin{proof}
The proof follows similarly to that of Lemma \ref{lem:4}.
As in \eqref{ineq:8},
for any $\delta\in(0,h)$,
\begin{eqnarray}\label{ineq:80}
\P[\tau_\var>t]
\le\int_h^t\P[\tau_\var^{(1)}>{s-\de}]\P[\tau_\var\circ\theta^s>t-s]ds.
\end{eqnarray}
Applying \eqref{ineq:22} and {\bf (H2)}(ii) yields
\begin{eqnarray*}
\P[\tau_\var^{(1)}>s-\de]\lesssim \lambda_\var^{-(s-\de)}
,\quad
\P[\tau_\var\circ\theta^s>t-s]
\lesssim e^{-r_0(\var)(t-s)}.
\end{eqnarray*}
Since $\delta$ can be arbitrarily small, substituting into \eqref{ineq:80} yields
\begin{eqnarray*}
\P[\tau_\var>t]
\lesssim \la_\var^{-t}\int_0^t\big(\la_\var e^{-r_0(\var)}\big)^{t-s}ds
\end{eqnarray*}
Thus, for any $\var>0$ sufficiently small such that $\la_\var e^{-r_0(\var)}<1$, and therefore the integral remains bounded, we obtain
\begin{eqnarray*}
\P[\tau_\var>t]
\lesssim\la_\var^{-t}.
\end{eqnarray*}
\end{proof}

    \medskip

The proof of Theorem \ref{thm:5} is analogous to that of Theorem  \ref{thm:2}.
				
				\medskip

\noindent{\it Proof of Theorem \ref{thm:5}:} 
Analogous to Proposition \ref{lem:5} in the double-well case, a combination of assumption  {\bf(H2)}(ii) and Lemma \ref{lem:multi_well} yields the following result for the multi-well case: for any $\var>0$ sufficiently small and  any $\la\in(1,\la_\var),$
	\begin{eqnarray*}
	\E[\la^{\tau_\var}]<\infty.
\end{eqnarray*}
As demonstrated in both the single and double-well cases,  the coupling time $\tau_c$ can be written as a finite iteration of $\tau_\var$, specifically  $\tau_c=\tau_\var^\eta,$ where $\eta$ is defined in  \eqref{eta}. %Following the  proof of Theorem \ref{thm:3}, 
In analogy with the double-well case in Theorem \ref{thm:4}, it then follows that for $\var>0$ sufficiently small and  any $\la\in(1,\la_\var),$ 
\begin{eqnarray*}
\E[\la^{\tau_\var^\eta}]<\infty, 
\end{eqnarray*}	
which implies the exponential tail estimate 
\begin{eqnarray*}
	\P[\tau_c>t]\lesssim \la_\var^{-t}.
\end{eqnarray*}
				
To establish the corresponding lower bound, the assumption that the initial distribution of $(X_0, Y_0)$ is fully supported is employed. 
Consider the case where $X_t$ is initialized in the distant basin $B_2$, and $Y_t$ starts in the basin $B_1$ associated with the global minimum. Under the assumption {\bf (H1)}, the event that $Y_t$ does not exit the region $\bs B_1$ prior to the entrance of $X_t$  occurs with positive probability, uniformly in both $\var$ and $t.$ Hence,
\begin{eqnarray}\label{ineq:21}						\P[\tau_c>t]
&\ge&\P\big[Y_s\in\bs B_1\ \text{for all}\  s\in[0,t]\big|\ka_{\bs X}(\bs B_1)>t\big]\cdot\P[\ka_{\bs X}(\bs B_1)>t]\nonumber\\				
&\ge&\ga_0\cdot\P[\ka_{\bs X}(\bs B_1)>t]\ {\simeq}\ \la_\var^{-t}
\end{eqnarray} 			
where the last approximation follows from 
\eqref{genaralized_8}, 
the strengthened version of Lemma \ref{lem:expon_tail_multi_well}. 
This completes the proof of Theorem \ref{thm:5}. \qed

\section{Numerical Examples}
This section presents numerical examples to verify  the theoretical results and the assumptions {\bf (H1)}-{\bf(H3)} concerning  the coupling scheme introduced in the preceding sections.  An algorithm is first proposed in Section \ref{exptail} to obtain accurate numerical estimates of the exponential tails of the coupling times. For further  details on the coupling algorithm, the reader is referred to \cite{li-wang2020}. 	

\subsection{An algorithm for  exponential tail estimation}\label{exptail}
Let $\tau_{c}$ denote the coupling time. 
While the rigorous results only establish bounds on the limit superior  of $\frac{1}{t}\log \mathbb{P}[\tau_c > t]$,  numerical simulations consistently indicate convergence of $\frac{1}{t}\log \mathbb{P}[\tau_c > t]$ as $t$ increases. Therefore, the numerical investigation focuses on computing the exponential decay rate of $\mathbb{P}[\tau_{c} > t]$ with respect to $t$, that is,
$$
    r(\var) = -\lim_{t\rightarrow \infty} \frac{1}{t}\log \mathbb{P}[ \tau_{c} > t] \,,
$$    
where $\var$ denotes the  noise magnitude in \eqref{SDE1}.    
Since only a finite number of coupling events can be sampled,  an efficient algorithm is required both to provide statistical evidence for the existence of the exponential tail and  to estimate its decay rate with reasonable accuracy.  
				
The main challenge is that $\mathbb{P}[\tau_{c} > t]$ typically does not exhibit exponential decay until $t$ is sufficiently large. It is therefore necessary to identify a suitable threshold $t^{*}$ such that the tail $\mathbf{1}_{\{\tau_{c} > t^{*} \}}(\tau_{c} -t^{*})$  approximately follows an exponential distribution, while keeping $t^*$ as small as possible to ensure that enough samples with $\tau_{c} >t^{*}$ are available. However, most exponentiality tests that have been attempted yield a threshold $t^{*}$ that is too small, resulting in the failure of the log-linear plot of the tail to stabilize into a linear trend.  This  is likely due to  the sensitivity of the plots to small deviations in tail behavior. 

The goal of our algorithm is to determine a suitable $t^{*}$ such that the log-linear plot of $\mathbb{P}[\tau_{c} > t]$ is  approximately linear 
for all $t>t^*.$
That is,  the confidence interval of the estimated values of
$\mathbb{P}[\tau_{c} > t]$ should contain a straight line on the logarithmic scale  for all $t > t^*$. The algorithm proceeds as follows. 
First, select a sequence of times $t_{0}, t_{1}, \ldots t_{N}$, where $t_{N}$ is typically set as the maximum of the sampled coupling times. Let $M$ denote the total sample size,  and for each $i$, let $n_{i}$ be the number of samples satisfying $\tau_{c} >t_{i}$. The Agresti-Coull method \cite{Agresti-Coull1998} provides a confidence interval for each $i$ of the form
				\[[ \tilde{p}_{i}^{-}, \tilde{p}_{i}^{+}]:=					[ \tilde{p}_{i} - z \sqrt{\frac{\tilde{p}_{i}}{\tilde{M}}(1 - \tilde{p}_{i}) } \,,
				\tilde{p}_{i} + z \sqrt{\frac{\tilde{p}_{i}}{\tilde{M}}(1 -
					\tilde{p}_{i} )} ]\,,
				\]
				where 	$\tilde{M} = M + z^{2},$ $ \tilde{p}_{i} = {(n_{i} + \frac{z^{2}}{2})}/{\tilde{M}},$ and $z = \Phi^{-1}( 1 - \alpha/2)$ is the $\alpha$-quantile of the standard normal distribution. In practice, $z= 1.96$ and $\alpha = 0.05$ are commonly used. 
				
Given any $N_0\in\{1,\dots,N\}$, a weighted linear regression can be performed to fit the points $(t_{i}, \log\tilde{p}_{i})$ for $i = N_{0},\ldots, N$, where each point  $(t_{i}, \log\tilde{p}_{i})$ is assigned a weight of $n_i/M$. 
If the regression yields a linear function of the form $y = a t + b$, then $N_{0}$ is considered acceptable if it satisfies 		
\begin{eqnarray}\label{N0}
\big| \{ N_0\le i\le N \}:  a t_{i} + b \notin [ \tilde{p}_{i}^{-},\tilde{p}_{i}^{+}]\big| < \alpha (N - N_{0} + 1),
\end{eqnarray}			
ensuring the residuals $\mathbf{1}_{\{\tau_{c} > t_{N_0}\} }(\tau_{c} -t_{N_0})$ are statistically consistent  with an exponential tail beginning at $t_{N_0}.$
For each candidate   $N_0$, this procedure evaluates whether the tail distribution of $\tau_c$ beyond $t_{N_0}$ 
is approximately exponential.  The final choice of $N_0$ is the smallest index that satisfies  the condition \eqref{N0},  which can be efficiently found via  binary search over $\{1, \ldots, N\}$ in $\mathcal O(\log N)$ iterations.
The threshold $t^{*}$ is then defined by $t_{N_{0}}$, and  the exponential decay rate is given  by the slope $a$ of the corresponding weighted regression line. 
		
\subsection{Quadratic potential function}\label{sec52}
The first example considers the quadratic potential function. The primary objective is to numerically verify the theoretical result stated in Theorem \ref{thm:1}. This example will be revisited in Subsection \ref{numerical verification} to examine the consistency of the first passage times between the continuous-time process and its time-$h$ sampled chain as  the step size $h$ tends to zero, in accordance with the approximation \eqref{approximate} discussed in Remark \ref{rem:approximate}. 	
				
Consider the quadratic potential function
\[U(x) = \frac{1}{2} {x}^{T} \bs{A} {x},\quad  x\in\R^k,\]
where $\bs{A}$ is a $k\times k$ Lehmer matrix whose entries are given by $\bs{A}_{ij} =
\min(i,j)/\max(i,j).$  The matrix $\bs A$ is
symmetric and positive definite \cite{lehmer1958}. 
The associated SDE is
\begin{equation}\label{Lehmer}
\mathrm{d}Z_{t} = - \bs{A} Z_t\mathrm{d}t + \varepsilon\mathrm{d}W_{t} \,,
\end{equation}
where   $W_{t}$ is a
$k$-dimensional Wiener process, and $\varepsilon > 0$ denotes the noise magnitude. 
			
In the numerical simulations, the time step size $h$ is fixed at $0.001$, unless stated otherwise. 
Figure \ref{fig1} displays
the probability distribution of the coupling time $\tau_{c}$. The four panels show $\mathbb{P}[\tau_{c}
> t]$ versus $t$ on a log-linear scale for Lehmer matrices of size  $2\times 2$, $4\times 4$, $6\times 6$, and $8\times 8,$  respectively. 
For each case, the noise magnitude $\var$ is set to $0.02$, $0.1$, $0.5$, and $1.5$. The slopes and linear fitting in the log-linear plots are determined using the algorithm described in Subsection~\ref{exptail}.
The smallest eigenvalue of $\boldsymbol{A}$ is indicated in the subtitle of each subplot in Figure \ref{fig1}.
			
In all four cases,  although the probability distribution of $\tau_{c}$ varies significantly with the  noise magnitude, the slopes of the exponential tails remain unchanged. Moreover, the smallest eigenvalue of 	$\boldsymbol{A}$, which can be computed explicitly,  closely approximates the slope of the corresponding exponential tail, with an error of at most $0.01$. This observation is consistent with Theorem \ref{thm:1}, which asserts that the slope of the	exponential tail is determined by the convexity of the potential function and is independent of the noise magnitude. 
			
\begin{figure}[htbp]
\centerline{\includegraphics[width = \linewidth]{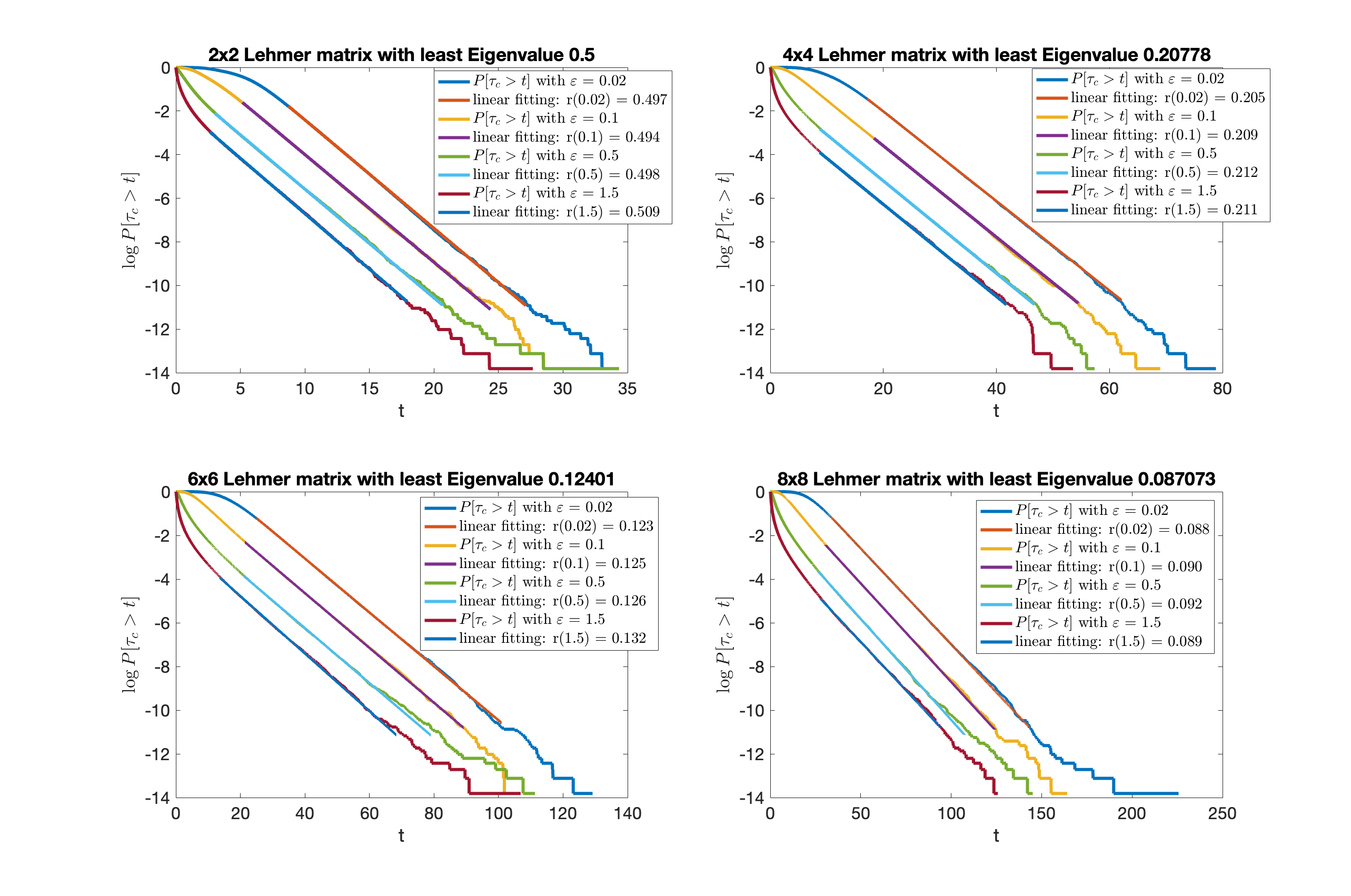}}
\caption{Log-linear plots of $\mathbb{P}[\tau_{c} > t]$ versus  $t$ and their exponential tails. The four panels correspond to  Lehmer matrices of sizes $2, 4, 6$, and $8$. The smallest eigenvalue of each matrix is indicated  in the title of the corresponding subplot.}
\label{fig1}
\end{figure}
	
\subsection{1D double-well potential} 
This subsection considers an asymmetric one-dimensional double-well potential given by
\[U(x) =  x^{4} - 2 x^{2} + 0.2x,\quad x\in\R.
\]
The potential  $U$ has two local minima located at $x=0.9740$ and $x=-1.0241.$ The barrier height that a trajectory must overcome to transition from the left well to the right is approximately $1.2074$, while the reverse transition requires overcoming a barrier of approximately $0.8076$; see the bottom left panel of Figure \ref{fig3}.
			
The purpose  of this example is to numerically verify the theoretical result of Theorem \ref{thm:2}, which asserts that the exponential  tail of the coupling time distribution is determined by the   {\it lower} of the two barrier heights. 
The time step size and  coupling method are the same as those used in the previous examples. The  noise magnitudes $\varepsilon$ are chosen as $0.32, 0.36, 0.4, 0.45, 0.5, 0.6$, and $0.7$. For each value of $\var$, the exponential tail $r(\var)$ is estimated using the weighted linear regression algorithm described in Subsection \ref{exptail}. The corresponding results are shown in  top panels of Figure \ref{fig3}. 
It is observed that the exponential decay rate   $r(\varepsilon)$ varies significantly with respect to  $\varepsilon$. 
   
In the bottom right panel of Figure \ref{fig3}, the quantity 
$y(\var):=-\var^2\log r(\var)$ is plotted against $\var^2,$ 
revealing an approximately linear relationship.   A linear extrapolation of $y(\var)$  as  $\var \rightarrow0$  yields the limiting value $y(0) = 1.617$, which closely agrees with  the theoretical value $y(0)=2H_U=1.615$, where $H_U$ denotes the lower barrier height of the potential. This confirms the validity of Theorem \ref{thm:2} in the asymmetric double-well setting. 

			\begin{figure}[htbp]
				\centerline{\includegraphics[width = \linewidth]{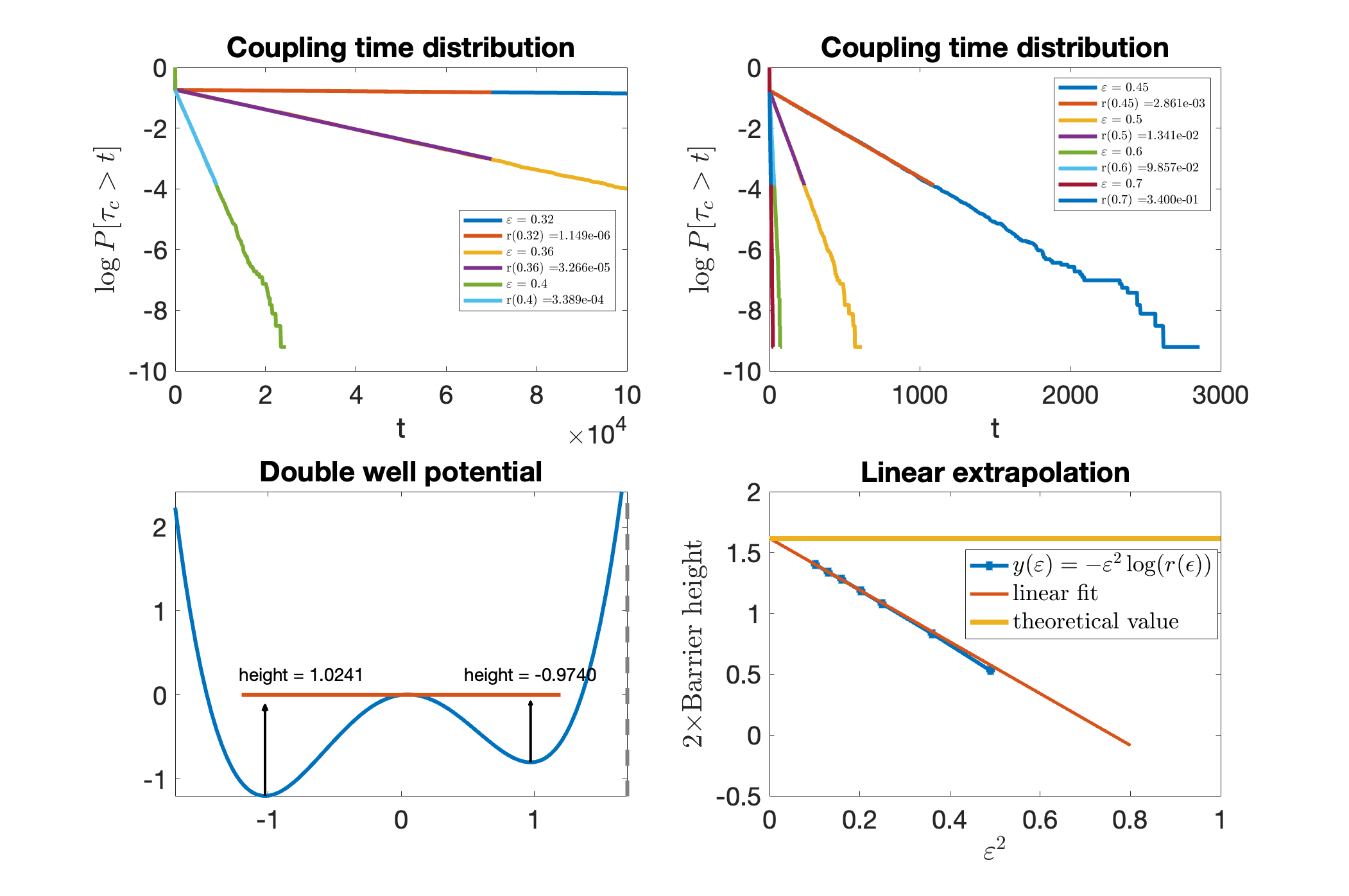}}
				\caption{Top: Coupling
					time distributions for different noise magnitudes. Bottom left: Asymmetric double-well potential. Bottom right: Linear extrapolation of the essential barrier
					height.}
				\label{fig3} 
			\end{figure}
			
\subsection{Interacting particle system in the double-well potential}\label{IPS}
This subsection considers a variation of the double-well potential introduced in the previous subsection. Let 
\[V(x) = x^{4} - 2 x^{2} + 0.2x,\quad x\in\R,\]
denote the double-well potential. Consider three particles moving along $V$ under  overdamped Langevin dynamics,  with additional pairwise interactions. The total energy potential is given by
\[U(x_{1}, x_{2}, x_{3}) = \sum\nolimits_{i = 1}^{3} V(x_{i}) + \sigma\sum\nolimits_{i,j =
1,2,3, \, i \neq j} (x_{i} - x_{j})^{2},\]
where $\sigma>0$ is the  interaction strength. 
			
The function $U$ has two trivial local minima at $x_{1} = x_{2} = x_{3} =0.9740$ and $x_{1} = x_{2} = x_{3} = -1.0241,$ 
corresponding to all  three particles occupying the same basin of $V$. For sufficiently small $\sigma>0$,  $U$ also admits six additional local minima, corresponding to configurations in which the particles are distributed across different basins; see the top  panel of Figure \ref{fig4} for a sample trajectory.
			
Two extreme regimes of interactions are notable. When $\sigma=0,$ i.e., when there are no interactions among the three particles, the particles move independently, 
resulting in a barrier height of the energy landscape identical to that of $V$. 
When $\sigma\to \infty$, the interaction is strong enough so that the three particles must move together as a single unit, making the barrier height of the energy potential $U$ three times that of $V$. For any fixed $\sigma>0$, the  essential barrier height $H_U$ lies between the barrier heights  of the two extreme cases, i.e., 
$0.8076\le H_U\le3 \times 0.8076 = 2.4228,$ with $H_U$ 
increasing as $\sigma$ increases. 
			
The estimate $H_U,$ and the distribution of the coupling time $\tau_{c}$ is computed for various values of $\var.$ For $\sigma =0.05$, values of $\var$ are chosen as $0.4,$ $0.41,$ $0.42,$ $0.43,$ $0.45,$ $0.47,$ $0.5,$ $0.55,$ $0.6,$ $0.7;$ for  $\sigma = 0.1$, values of $\var$ are $0.41,$ $0.42,$ $0.43,$ $0.44,$ $0.45,$ $0.47,$ $0.5,$ $0.55,$$0.6,$ $0.7$. The
decay rate $r(\varepsilon)$ of the exponential tails is estimated in both cases using linear weighted regression.
The relationship between $r(\varepsilon)$ and $\varepsilon$ exhibits a similar  trend to that observed for the double-well potential in the previous subsection.            
A linear extrapolation of $y(\var):=-\varepsilon^{2} \log r(\varepsilon)$ provides an estimate of the the essential barrier height.
As shown in the middle right panel of  Figure \ref{fig4},  the linear extrapolation yields $y(0) = 1.7374$ for $\sigma = 0.05$ and
$y(0) = 1.9598$ for $\sigma = 0.1$, both of which are expected to be approximately twice the barrier height $2H_U$, which will be computed using the String method below.         
As expected, the barrier
height increases with the interaction strength among the three particles. 
			
			\begin{figure}[htbp]
				\centerline{\includegraphics[width = 1.2\linewidth]{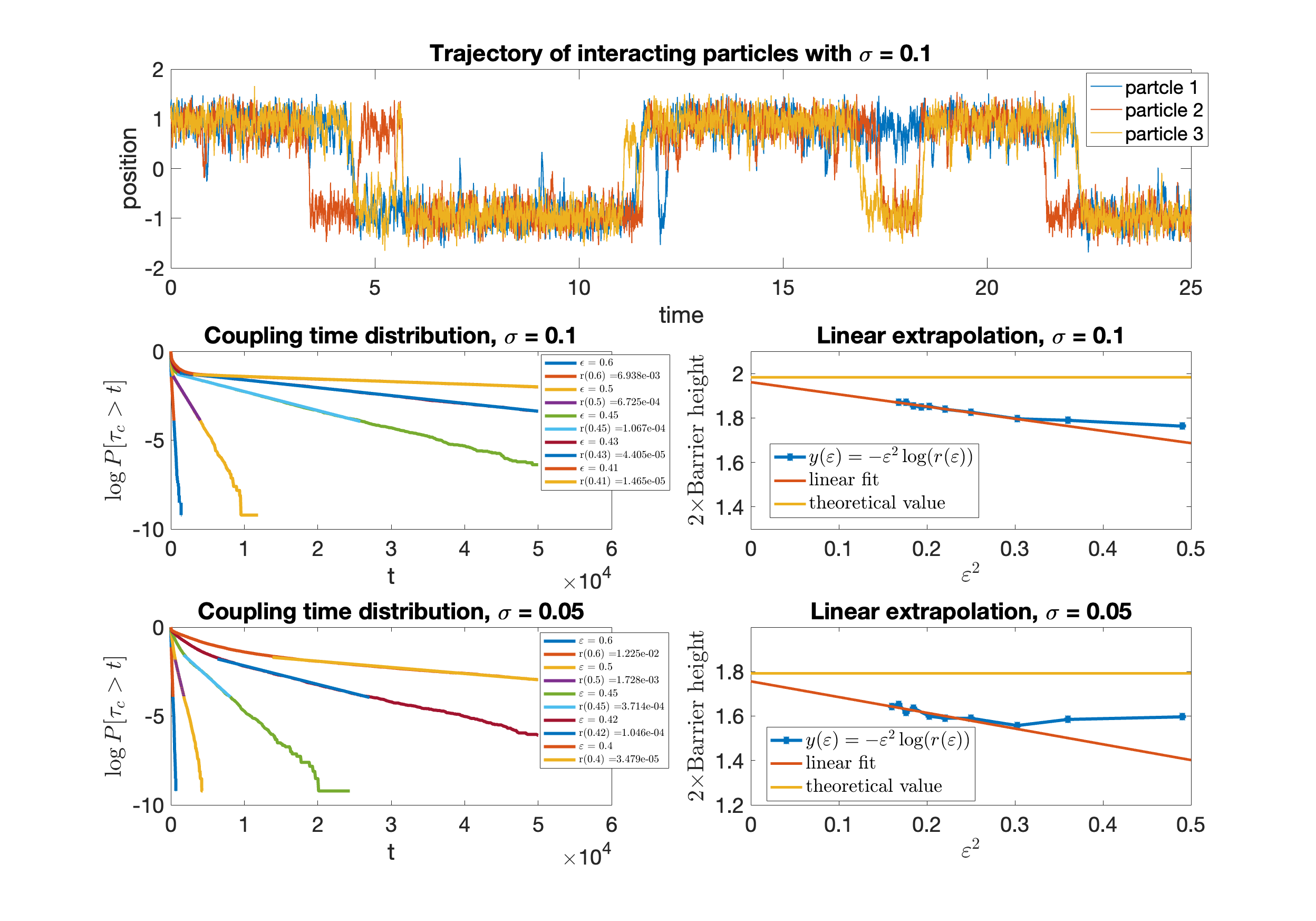}}
				\caption{Top: Sample trajectory of a three-particle interacting 
					system in a double-well potential. Middle: 
                     Coupling time
					distributions and $y(\var)=-\varepsilon^{2} \log r(\varepsilon)$ versus  $\varepsilon^2$ 
					for $\sigma = 0.1$. Bottom: Coupling time
					distributions and $y(\var)=-\varepsilon^{2} \log r(\varepsilon)$ versus $\varepsilon^2$ 
					for $\sigma = 0.05$. Theoretical values of $y(0)$ in the middle-right and bottom-right panels are obtained from the
					minimum energy path.}
				\label{fig4} 
			\end{figure}

To validate the  essential barrier height inferred from the coupling approach, the String method  (see, e.g., \cite{e2007simplified}) is employed to compute the heights of various barriers between the local minima $(0.9740, 0.9740, 0.9740)$
and $( -1.0241, -1.0241, -1.0241)$ in the energy landscape. 	As shown in Figure \ref{fig:MEPbyString}, the essential barriers, defined as the highest  barrier that a trajectory must overcome to enter the basin of the global minimum,
correspond to the leftmost barrier in the lower left panels of Figure \ref{fig:MEPbyString}: (A)  for  $\sigma=0.01$   and (B) for $\sigma=0.1$, respectively.

In this example, since the  three particles are indistinguishable, the energy potential exhibits significant symmetry: the eight local minima can be classified  into  two types, each consisting of four specific cases. These cases correspond to configurations where  (i) all  three particles reside in the same  basin (global or local), or (ii)  two of the three particles lie in one basin (global or local), while the remaining particle resides in the other basin. Figure \ref{fig:MEPbyString} shows that  the minimal energy path (MEP) connecting the two  minima  $(0.9740, 0.9740, 0.9740)$ and $( -1.0241, -1.0241, -1.0241)$   passes through  all four  cases. Thus, the essential barrier height $H_U$ can be attained along such an MEP although, 
	in principle, it should be determined by taking the supremum  over all  paths connecting any local minima to the global minima. In Figure \ref{fig:MEPbyString},  the computed values are  $H_U=0.8961$ for $\sigma = 0.05$ and $H_U=0.9916$ for $\sigma = 0.1$, which correspond to the theoretical values $y(0) = 1.7922$ for $\sigma = 0.05$ and $y(0) =1.9832$ for $\sigma = 0.1,$ respectively.

The result from the String method is further validated using the equivalent characterization  \eqref{equivalent_characterize_H_U} by numerically computing all  $27$ critical points of $U$, including all the minima and saddle points. The essential barrier heights obtained through this approach are $H_U=0.8962$ for $\sigma = 0.05$ and $H_U=0.9916$ for $\sigma = 0.1$, which are nearly identical to those computed using the String method. As shown in Figure \ref{fig4}, both values also  closely match $y(0)/2$, the estimate obtained via linear extrapolation  from the exponential tails of coupling times.

			\begin{figure}
				\begin{subfigure}{\textwidth}
					\hspace{-50pt}
					\includegraphics[width=1\textwidth]{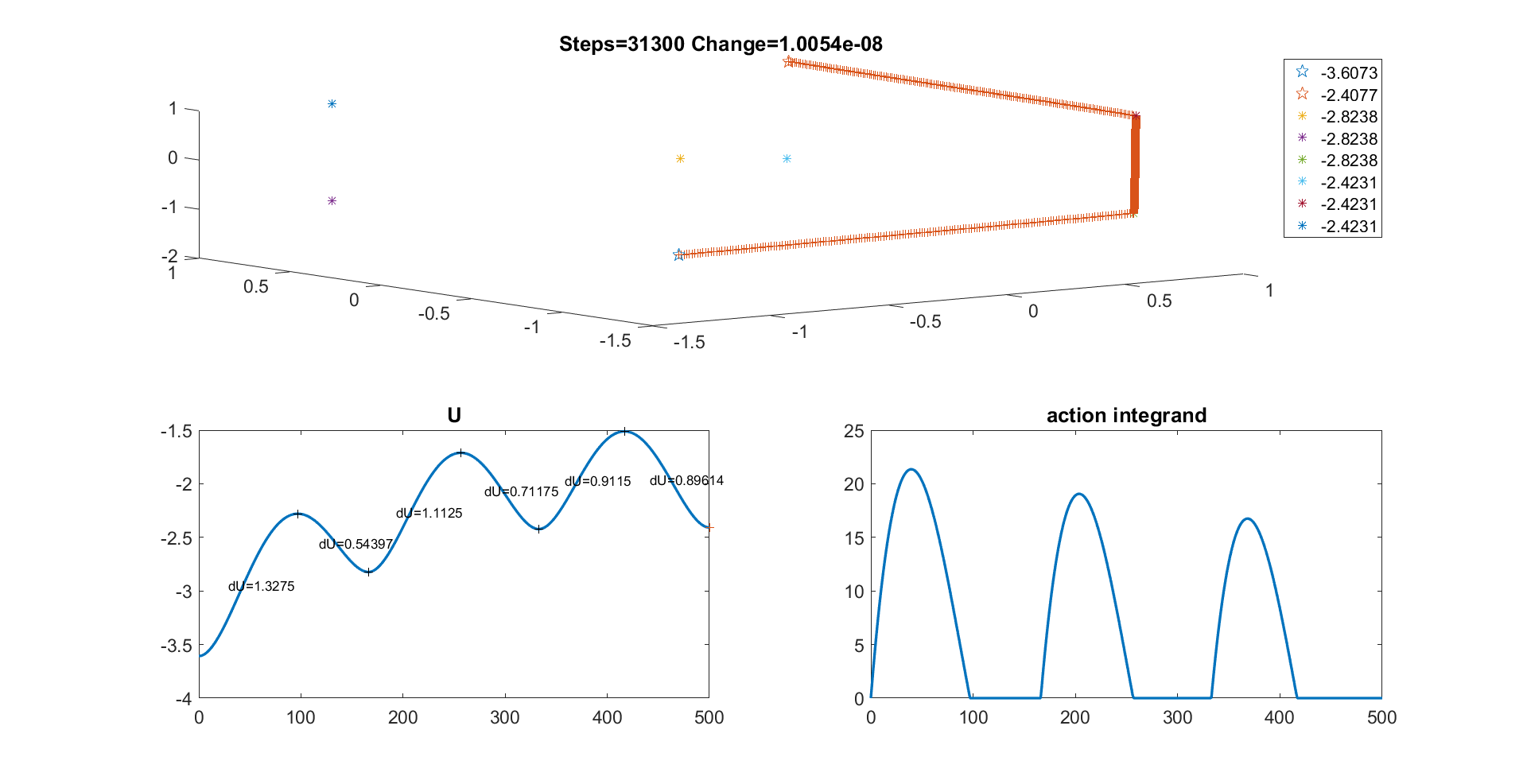}
					\caption{$\sigma=0.05$}	
				\end{subfigure}
				\begin{subfigure}{\textwidth}
					\hspace{-50pt}
					\includegraphics[width=1\textwidth]{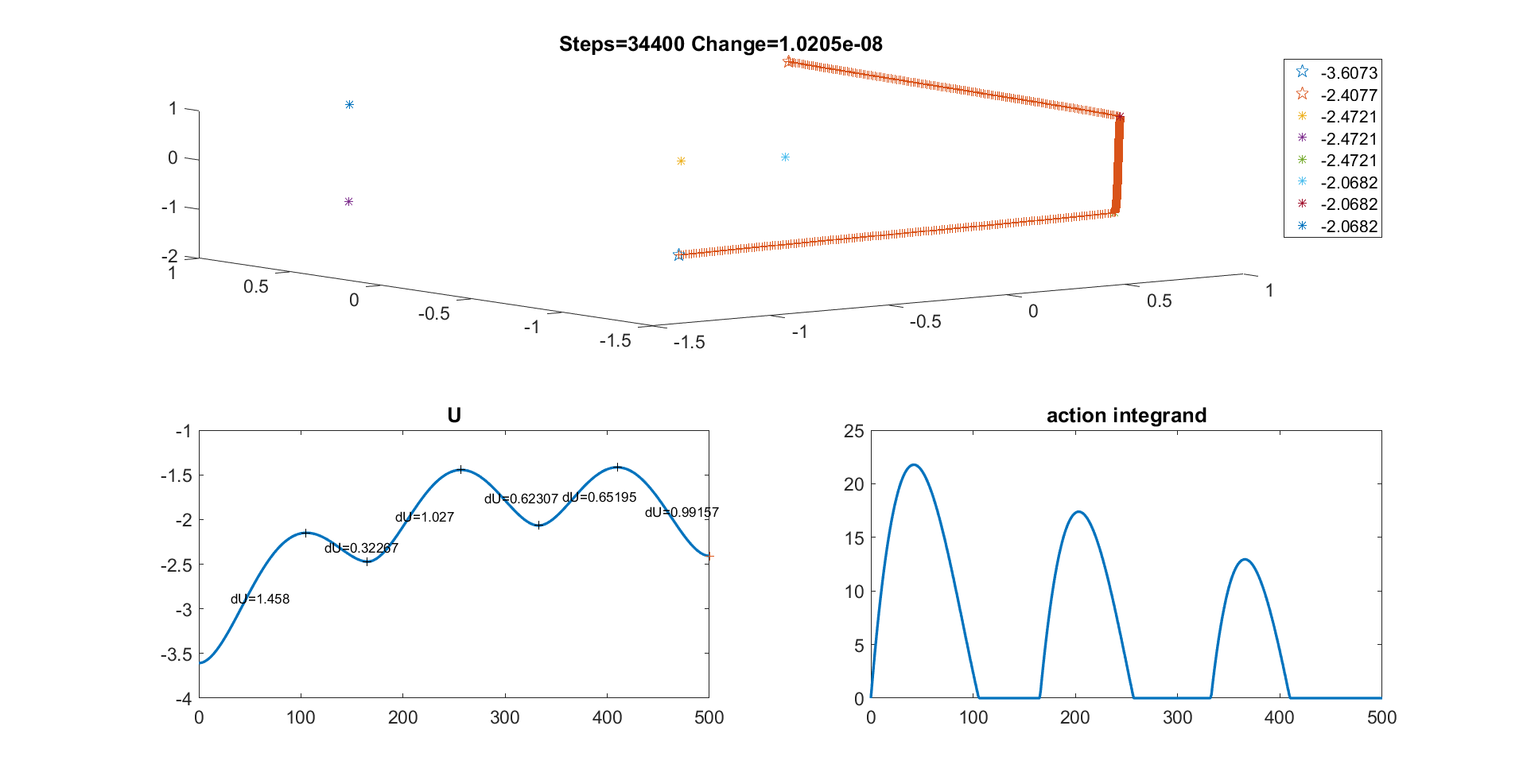}
					\caption{$\sigma=0.1$}	
				\end{subfigure}
				\caption{Minimum Energy Path (MEP) computed using the String method \cite{e2007simplified} at high numerical resolution. The MEP represents the most likely transition path between two metastable states  in the zero-temperature limit of overdamped Langevin dynamics. It is known (e.g., \cite{e2002string}) to reveal barrier heights and descent depths along the transition path, which are labeled by $\textrm{dU}$ values in the bottom-left panel. The top panel visualizes the MEP in three-dimensional space $(x_1,x_2,x_3)$, where the legend lists the values of the potential $U$ at each local minimum. The bottom-right panel displays the integrand of the Freidlin-Wentzell action functional  as a function of the arc-length parameterization of the path,  serving as a sanity check to verify the correctness of the computed MEP.}
				\label{fig:MEPbyString}
			\end{figure}				
			
\subsection{Rosenbrock function}
This example examines  the well-known non-convex landscape of the Rosenbrock function  in both two- and four-dimensional cases.  For $N\in\N_+$, the Rosenbrock
function is defined as
			\[
			R_{N}( \bs{x}) = \sum_{i = 1}^{N-1} [b (x_{i+1} - x_{i}^{2})^{2}
			+ (a - x_{i})^{2}],\quad\bs{x} \in \mathbb{R}^{N},
			\]
where $a$ and $b$ are  constants. In this study, the parameters are  chosen as $a = 1$ and $b = 20$. For $N = 2$, the function $R_{N}$ admits a unique 
minimum at $(1,1)$, while  for $N = 4$, it possesses a global minimum at
$(1,1,1,1)$ and a local minimum at $(-1,1,1,1)$. Figure
\ref{fig6} illustrates the function landscape: the top-left panel displays $\log R_{2}( \bs{x})$, and the bottom-left panel shows a slice of $\log R_{4}( \bs{x}) $ at $x_{3} = x_{4} = 1$. A logarithmic scale is used  to better visualize the detailed structure  near each minimum. In the vicinity of each minimum,  the  function exhibits a valley-like shape, remaining convex only within a very small neighborhood. The landscape of $R_{4}$ cannot be fully captured by a single heat map slice; however, it is straightforward to verify that the convex region of $R_{4}$ is relatively small. 
			
The noise magnitude is set as $\varepsilon = 0.001,$ $0.01,$ $0.1,$ $1.0,$ $1.5,$ and $2.0$  for $R_{2},$ and as $\varepsilon = 0.001,$ $0.003,$ $0.01,$ $0.03,$ $0.1,$ $0.3,$ and $1.0$ for $R_{4}$. The corresponding coupling time distributions are shown in the  two right  panels of Figure \ref{fig6}. It can be observed that for both cases, when the noise is sufficiently small, the tails of the coupling time distributions appear parallel in the log-linear plot. This behavior arises because  the	coupling time is primarily determined by the local convexity near the global minimum, in agreement with the result of Theorem \ref{thm:1}.  However, as the noise increases, trajectories are more likely to explore the entire valley rather than remaining confined to  the neighborhood of the global minimum. Consequently,  the coupling time distributions are altered.  
			
Another interesting phenomenon is that for the potential function $R_{4}$,  the coupling time distribution does not exhibit an exponentially small tail with respect to the noise magnitude, 
even when $\var=0.001$.
This contrasts with the theoretical results for the double-well potential.   Moreover, even when one of the coupled	processes is initialized  at the local minimum $(-1, 1,1,1)$, the tail of the coupling time distribution remains largely unchanged, as shown in the plot labeled  ``$\var= 0.001$ fixed''. This phenomenon occurs because the basin of the local minimum is shallow and separated by  a  low barrier, which  can be  easily crossed by a trajectory allowing it to quickly reach the valley of the global minimum. 		
			
			\begin{figure}[htbp]
				\centerline{\includegraphics[width = \linewidth]{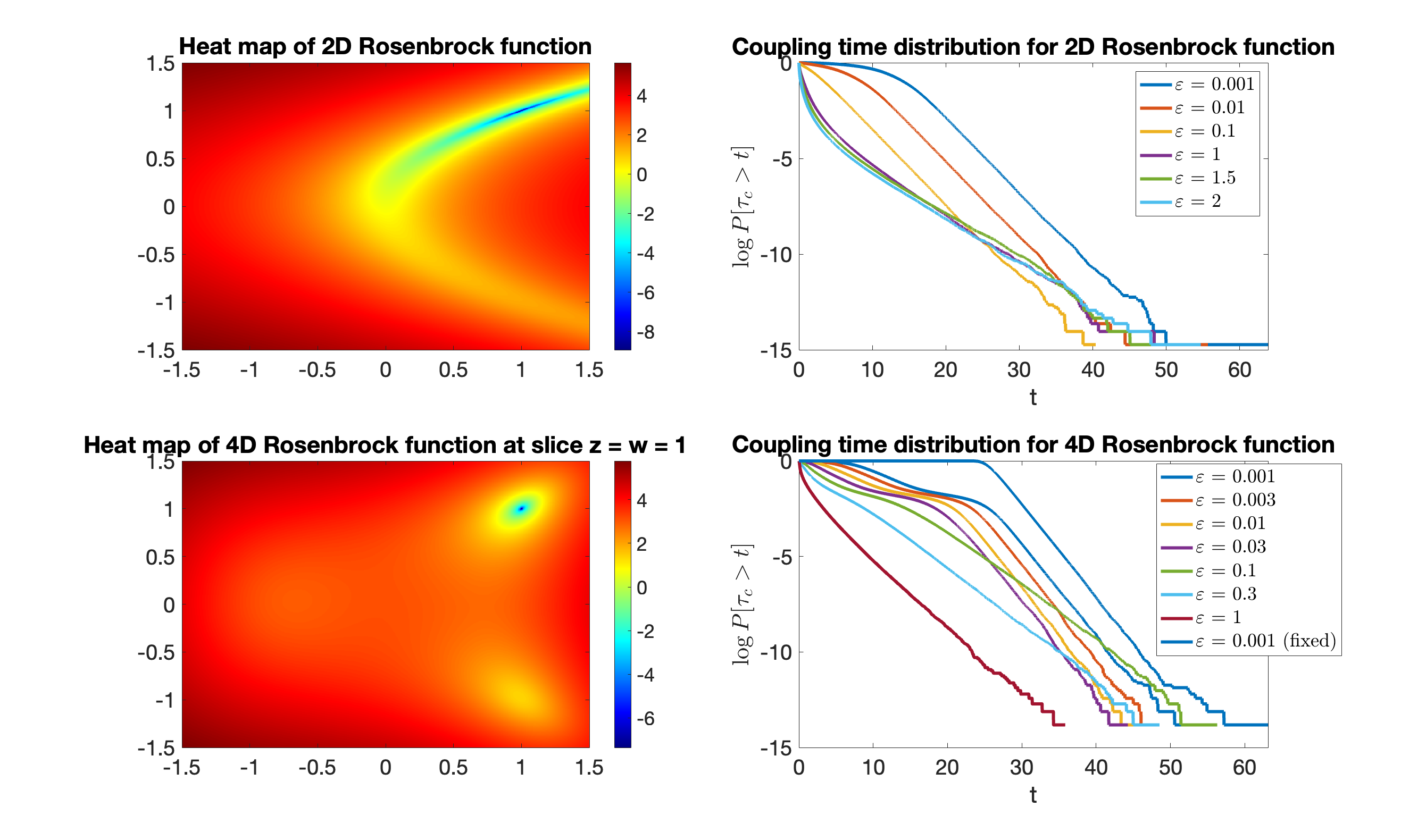}}
				\caption{Top left: Landscape of $R_{2}$. Top right:
					Coupling time distribution for $R_{2}$ under  different noise magnitudes. Bottom left: Landscape of $R_{4}$. Bottom right:
					Coupling time distribution for $R_{4}$ under  different noise magnitudes.}
				\label{fig6}
			\end{figure}

\subsection{Loss functions of artificial neural networks}
This subsection investigates the performance of the coupling method in a high-dimensional setting. Specifically, the training process of an artificial neural network (ANN) with two hidden layers is considered, where the first and second layers contain $N_1$  and $ N_{2}$ neurons, respectively. 
Let $\mathrm{ReLU}(z) = \max\{z ,0\}$ denote the rectified linear unit activation function.The ANN considered here is defined by
the following structure
			\begin{eqnarray}
				&&\mathbf{h}_{1} = \mbox{ReLU}(W_{1} \bs{x} + \mathbf{b}_{1})\label{NN1} \\
				&& \mathbf{h}_{2} = \mbox{ReLU}(W_{2} \mathbf{h}_{1} + \mathbf{b}_{2})\label{NN2}\\
				&&y = W_{3} \mathbf{h}_2 + b_{3}\label{NN3} \,,
			\end{eqnarray}
			where  $\bs{x} \in \mathbb{R}^{2}$ is the input and  $y \in \mathbb{R}$ is the output. The vectors    
            $\mathbf{b}_{1} \in \mathbb{R}^{N_{1}}, \mathbf{b}_{2} \in \mathbb{R}^{N_{2}}$ and $b_{3} \in
			\mathbb{R}$ denote  the bias terms. The weight matrices
			$W_{1}$, $W_{2}$, and $W_{3}$ have dimensions $N_{1} \times 2$, $N_{2}
			\times N_{1}$,  and $1 \times N_{2}$, respectively. Let  $\bm{\theta}$ denote the collection of  all trainable parameters, including the entries of  $W_{1}, W_{2}, W_{3}, \mathbf{b}_{1},
			\mathbf{b}_{2}$, and $b_{3}$.
			The total number of parameters is given by  \[\dim\bs\theta=(N_{1}N_{2} +
			3 N_{1} + 2 N_{2} + 1).\] 		
			For notational convenience,  the ANN defined by  \eqref{NN1} - \eqref{NN3} is denoted by $y =
			\mbox{NN}( \bm{\theta}, \bs{x}).$ 			
			%The activation function employed in the hidden layers  is the rectified linear unit (ReLU) 	defined by $\mbox{ReLU}(z)=\max(0,z).$
			
		The objective of the training process is to approximate the quadratic function $y = |\bm{x}|^2$ using the ANN.			
			Given a training set  $\{\bs{x}_{1}, \ldots, \bs{x}_{M}; y_{1}, \ldots y_{M}\}$, the  loss function is defined by 
			\[
			L( \bm{\theta}) =\dfrac{1}{M} \sum\nolimits_{i = 1}^{M} \left (y_{i} - \mbox{NN}(
			\bm{\theta}, \bs{x}_i) \right )^{2},
			\]		
			where the training set size is fixed at $M = 100$. The input points 
			$\bs{x}_{1}, \ldots, \bs{x}_{100}$ are uniformly sampled from
			$[-1\, , 1]^{2}$, and the corresponding target values are given by $y_{i} = | \bs{x}_{i} |^{2}$. The
			first column of Figure \ref{fig8} illustrates  the distribution of the collocation
			points and the target function $y =| \bs{x}|^{2}$.  The goal is to analyze the structure of the loss function $L(\bm{\theta})$.		
			
			The coupling method is applied to three ANNs with different hidden layer sizes: 
			$N_{1} = 4,N_{2} = 3$ (referred to as the ``small ANN''), $N_{1} = N_{2} = 10$ (the ``medium ANN''), and $N_{1} = N_{2} =
			20$ (the ``large ANN''). In this example, the small ANN  is under-parameterized, while the large ANN is over-parameterized. It is often believed that  over-parameterization tends to reduce  barrier heights in the loss landscape of ANNs (see, e.g., \cite{jacot2018neural, song2018mean, chizat2019lazy, mei2019mean, rotskoff2018trainability, sirignano2020mean, draxler2018essentially}). However, rigorous  justification remains elusive due to the complex structure of  high-dimensional loss functions. The  coupling-based approach proposed here may offer a viable tool in this regard by computing the essential barrier height  of such loss functions. 		
			
Figure  \ref{fig7} presents the coupling time distributions for the three neural networks under ten different noise magnitudes. For visual clarity, only five  noise levels are shown. As in  previous examples, the slopes are estimated via weighted linear regression. The six smallest values of $\varepsilon^{2}$ are used for the linear extrapolation of $y(\var):=-\var^2\log r(\varepsilon)$ versus $\varepsilon^{2}$, as displayed  in the lower panels.    
It is observed that the large ANN exhibits a lower essential barrier height.
More precisely, no significant barrier is detected within the region explored by the coupling method. This observation is consistent  with the findings in \cite{draxler2018essentially},  which adopts a different approach 
based on computing the MEPs between the local minima of the loss surface. 
Although it is theoretically possible that a high-barrier local minimum exists in a remote region not reached by the coupling trajectories, such cases have not been reported to the best of our knowledge. Moreover, practical ANN training is typically regularized, which prevents $|\bm{\theta}|$ from becoming excessively large.

The small ANN in this example is under-parameterized, as it contains only 31  parameters to be learned,  whereas the training set  comprises $100$ samples.  As illustrated in Figure \ref{fig7}, the loss function  of the small ANN exhibits a much larger essential barrier height compared to both the medium and large ANNs. Regarding the training performance, when initialized randomly,  the small ANN may converge to a ``bad" local minimum that fails to 
accurately approximate the target function (see the middle panels of Figure \ref{fig8}). 
In contrast, for all tested  initial	conditions, both the medium and large ANNs consistently converge to a ``good" local minimum of the training loss function, yielding satisfactory  
approximations of the target function  (see the right panels of Figure \ref{fig8}). This finding aligns with existing studies on the loss landscapes of ANNs \cite{choromanska2015loss, kawaguchi2019every, soltanolkotabi2018theoretical}.

			\begin{figure}[htbp]
				\centerline{\includegraphics[width = \linewidth]{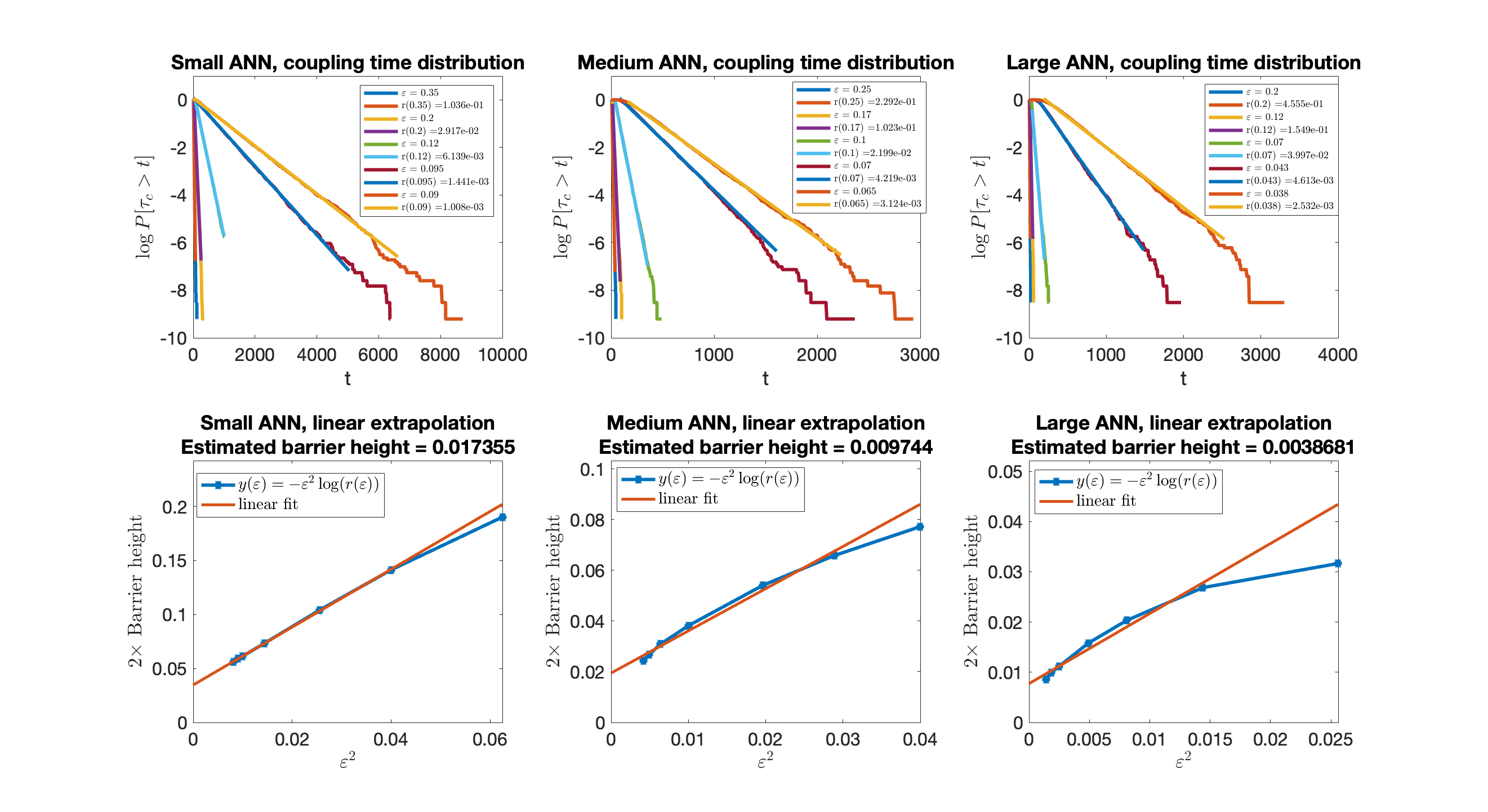}}
				\caption{Coupling time distributions and linear extrapolation of
					$y(\var)=-\var^2\log r(\varepsilon)$ versus $\varepsilon^{2}$. Left: small ANN;  middle: medium ANN; right: large ANN.}
				\label{fig7} 
			\end{figure}

			\begin{figure}[htbp]
				\centerline{\includegraphics[width = \linewidth]{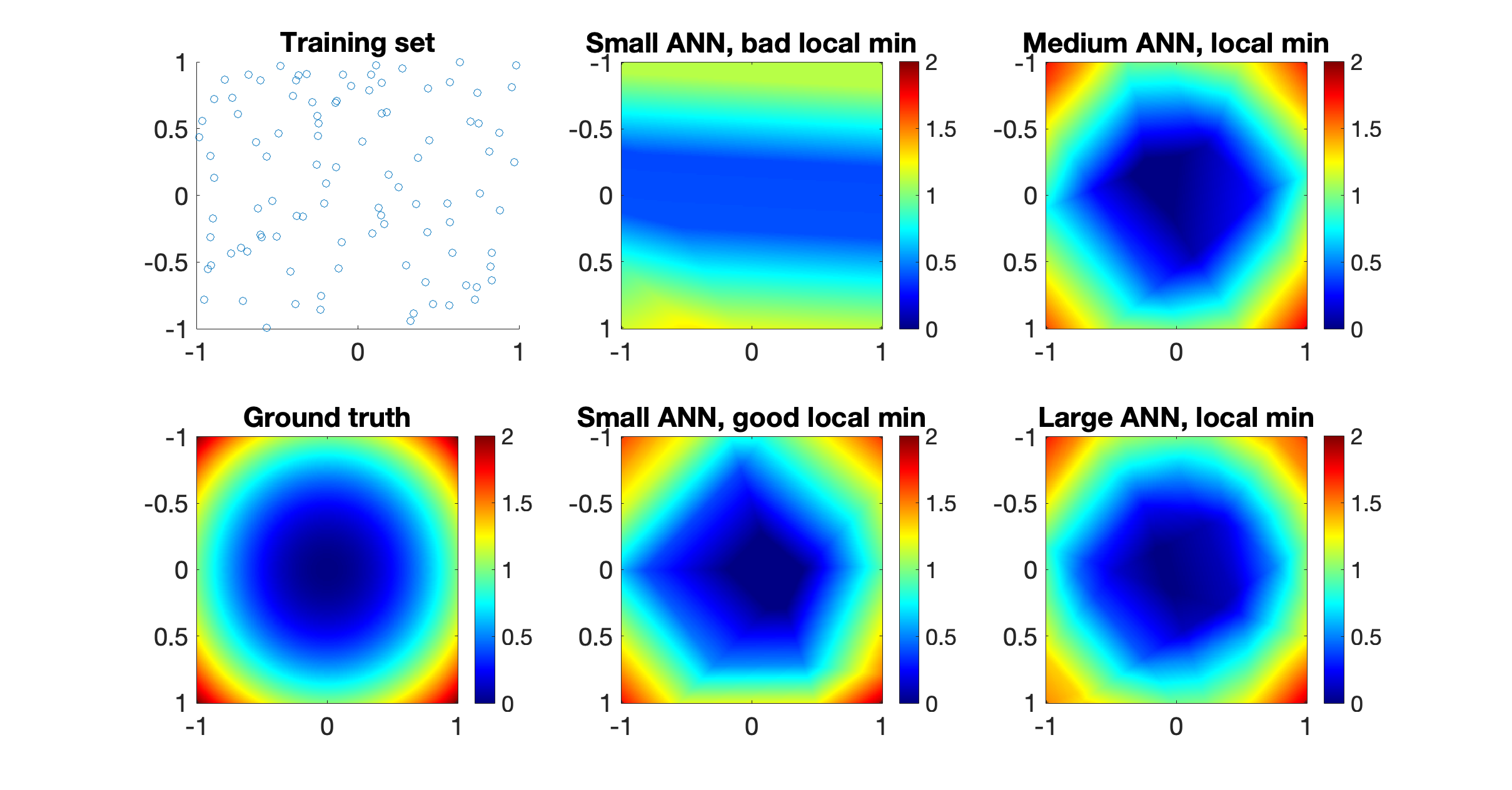}}
				\caption{Left column: Training set and target function. Middle column:
					$y =\mbox{NN}( \bm{\theta}, \bs{x})$ for $\bm{\theta}$ at the ``bad'' and ``good''
					local minima of the small neural network. Right column: $y =
					\mbox{NN}( \bm{\theta}, \bs{x})$ for the medium and large neural
					networks. 
				}
				\label{fig8} 
			\end{figure}

\subsection{Numerical verification of assumptions}\label{numerical verification}
In this subsection, assumptions {\bf (H1)}-{\bf(H3)} proposed in Section \ref{sec:double-multiple well} are numerically verified. In addition, the consistency between the first  passage times of the continuous-time process and its discrete-time counterpart 
is examined, as discussed in
Remark \ref{rem:approximate}.  		
		
\subsubsection{Numerical verification of Remark \ref{rem:approximate}.} We first numerically verify the  consistency of the first passage times defined in Remark \ref{rem:approximate} as two reflection-coupled trajectories approach each other. Specifically, we examine the assumption \eqref{approximate}
\[\lim\nolimits_{h \rightarrow 0} | \tau_{h}^{0} - \tau_{h}^{h} | = 0,\quad \text{$\P$-a.s.}\]
where $\tau_{h}^{0} = \inf\nolimits_{t > 0} \{ |X_{t} - Y_{t}| = 2 \sqrt{h}\}$ denotes the  continuous-time first passage time, and $\tau_{h}^{h} = h \cdot \inf\nolimits_{n > 0} \{ |X_{nh} - Y_{nh}| = 2 \sqrt{h}\}$ is its
discrete-time counterpart based on a time-$h$ sampled chain. 
			
This verification can be conducted  using an extrapolation argument. Let $h_{1} =h/n$ for some integer $n$,  and define the first passage time of the time-$h_1$ sampled chain by
\[\tau_{h}^{h_{1}} = h_{1} \cdot \inf\nolimits_{n > 0} \{ |X_{nh_{1}} - Y_{nh_{1}}| = 2 \sqrt{h} \}.\]
By the strong approximation property of the Euler-Maruyama scheme for SDEs, 	\[\lim\nolimits_{h_{1}\rightarrow 0} \tau_{h}^{h_{1}}  = \tau_{h}^{0},\quad\P\text{-a.s.},\]
so it suffices to compare  $\tau_{h}^{h}$ and $\tau_{h}^{h_{1}},$ with the latter serving as an approximation of  $\tau_h^0$ for the same trajectory. 
			
We apply the extrapolation method to  both the quadratic potential function  in Section \ref{sec52} and the interacting particle system  in Section \ref{IPS}. In the quadratic potential  case, the initial values of $X_t$ and $Y_t$ are set to $(0.5, 0.7)$ and $(-0.5, -0.6),$ respectively. For the interacting particle system,  $X_t$ and $Y_t$ are initialized at $(1,1,1)$ and $(-1, -1, -1)$, respectively, indicating  that they belong to the basins of different local minima.
			
In the top-left and bottom-left panels of Figure \ref{fig2}, the quantity $(\tau_{h}^{h}-\tau_{h}^{h_{1}})$ is plotted against $\sqrt{h_{1}}$ for five different values of $h.$
In both examples, this difference exhibits approximately linear behavior as $\sqrt{h_{1}}\to0.$ 
 An extrapolation at $h_{1} = 0$ provides an estimate of $(\tau_{h}^{h}-\tau_{h}^{0})$.  The top-right and bottom-right panels of Figure \ref{fig2}
 display 
 $(\tau_{h}^{h} - \tau_{h}^{0})$ 
 versus $\sqrt h$ for $h = 0.0002, 0.0005, 0.001, 0.005$, and $0.01$, respectively. The results show that this error  decreases as $h\to0,$ and a linear fit suggests  that $(\tau_{h}^{h} -\tau_{h}^{0})$  is approximately proportional to $\sqrt{h}$, consistent with the  findings of \cite{Gobet2004exact, Gobet2010stopped}. Although the error in estimating $\tau_h$ is larger for the interacting particle system due to  the presence of multiple local minima, the numerical results still exhibit the  expected convergence behavior as $h \rightarrow 0$.
			
			\begin{figure}[htbp]
				\centerline{\includegraphics[width = \linewidth]{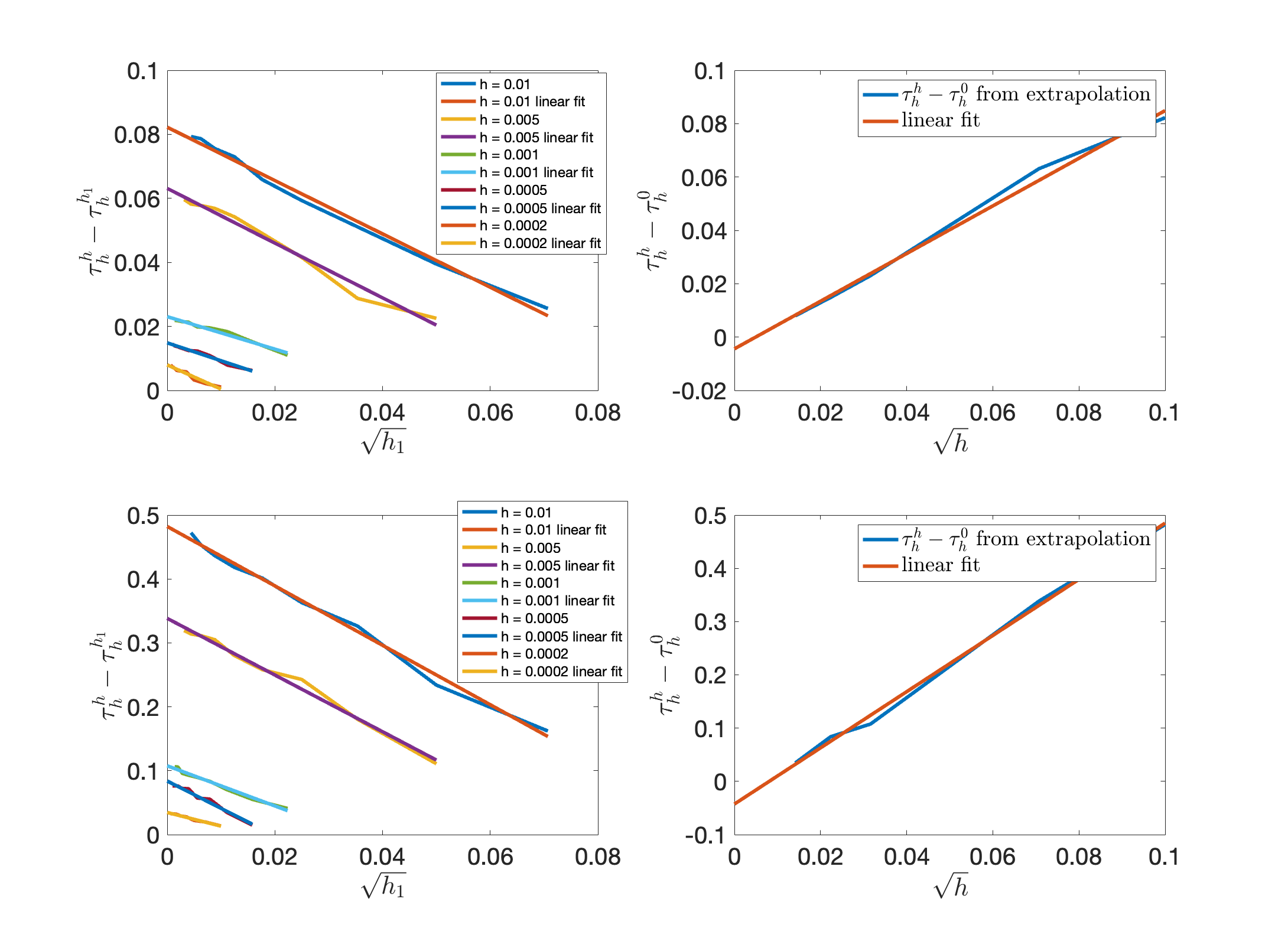}}
				\caption{Left: $(\tau_{h}^{h} - \tau_{h}^{h_{1}})$ vs. $\sqrt{h_{1}}$
					for five different values of $h$. 
                    Top left: Quadratic potential function. Bottom left: Interacting particle system. 
                    Right: $(\tau_{h}^{h} - \tau_{h}^{0})$					vs. $\sqrt{h}$ with a linear fit. 
                    Top right: Quadratic potential function. Bottom right: Interacting particle system.}
				\label{fig2}
			\end{figure}

			\medskip
			
In the following subsections, the interacting particle system described in Section \ref{IPS} is used to numerically  verify  assumptions {\bf (H1)-\bf(H3)}. The coupling strength is set to $\sigma = 0.05$.			
			
\subsubsection{Numerical verification of  {\bf (H1)}}\label{sec:H1}
Let $X_0 = (1,1,1)$ and  $Y_0 = (-1,-1, -1)$, ensuring that the trajectory $Y_t$ is initiated  near the global minimum. Based on the barrier heights illustrated in Figure \ref{fig:MEPbyString} and the definitions of $\mathcal I$,  the set $\bs B_1$ is identified as the complement of the basin containing $(1,1,1)$. The simulation is performed under four different noise magnitudes: $\varepsilon= 0.6, 0.65, 0.7, $ and $0.75$.  At each step of the  Euler-Maruyama scheme,it is numerically checked whether $X_t$ and $Y_t$ lie in $\bs B_1$.
The criterion for determining whether a point $\bs{x} = (x_1, x_2, x_3)$ belongs to $\bs B_1$
is as follows: for each $i = 1, 2, 3$, if either $x_i > 0.11$, or  $0 \leq x_i < 0.11$ while $-\partial U /\partial x_i > 0$, then $\bs x\notin\bs B_1$. This condition is sufficient for all  samples in our numerical simulation. 
			
Remarkably, across tens of millions of samples, $Y_t$ was  {\it never} observed to exit $\bs B_1$ before $X_t$ entered it. A similar phenomenon is observed in the one-dimensional double-well potential, where $Y_t$ remains in $B_1$ until $X_t$ enters.  This can be explained by noting that reflection-coupled Brownian motions have the same action functionals. When $X_t$ exits the basin $B_2$ and enters $\bs B_1$, the action functional of the associated driving Brownian motion of $X_t$ is highly likely to be close  to $H_U$. Consequently, the action functional corresponding to the Brownian motion term in $Y_t$ is unlikely to be  sufficiently large  to drive $Y_t$ out of $\bs B_1$. Therefore, assumption {\bf (H1)} is numerically verified  with the even stronger conclusion that
\begin{eqnarray*}
\P\big[Y_s\in \bs B_1\ \text{for all}\  s\in[0,t]\big| \ka_{\bs X}(\bs B_1)>t\big] \approx 1.\end{eqnarray*}

\subsubsection{Numerical verification of {\bf (H2)}}
Let $B_2$ denote the basin of attraction  containing  $(1,1,1)$. According to Proposition \ref{pfH2} and Remark \ref{rem:weaker_condition}, it suffices to verify condition (a) therein. Specifically,  this involves  numerically estimating  the probability that a trajectory  enters the interior of $B_2$.
The approximate boundary of $B_2$ is depicted in the left panel of Figure \ref{figIPSH2}.  
It suffices  to consider initial points from the  boundary, as the probability is expected to be higher when starting from the interior. 

Three initial values of $X_0$ 
are selected from a corner, an edge, and a face of $\partial B_2$, respectively, as marked in red in Figure \ref{figIPSH2} (Left). The initial value of $Y_0$ is fixed at $(1,1,1)$.
For each case, the probability that the coupled process $(X_t, Y_t)$ remains in the $\delta$-interior of $B_2 \times B_2$ (with $\delta = 0.01$) throughout the time interval $[h,T_0]$ is computed,  where $h=0.05,$ $T_0 = -\log\var$, and  $\varepsilon$ varies from $0.001$ to $0.01$. As shown in Figure \ref{figIPSH2}, this probability remains uniformly bounded from below as $\var \rightarrow 0$. This numerically confirms condition (a) of Proposition \ref{pfH2}, thereby verifying assumption {\bf (H2)}.

\begin{figure}\centering
\includegraphics[width = \linewidth]{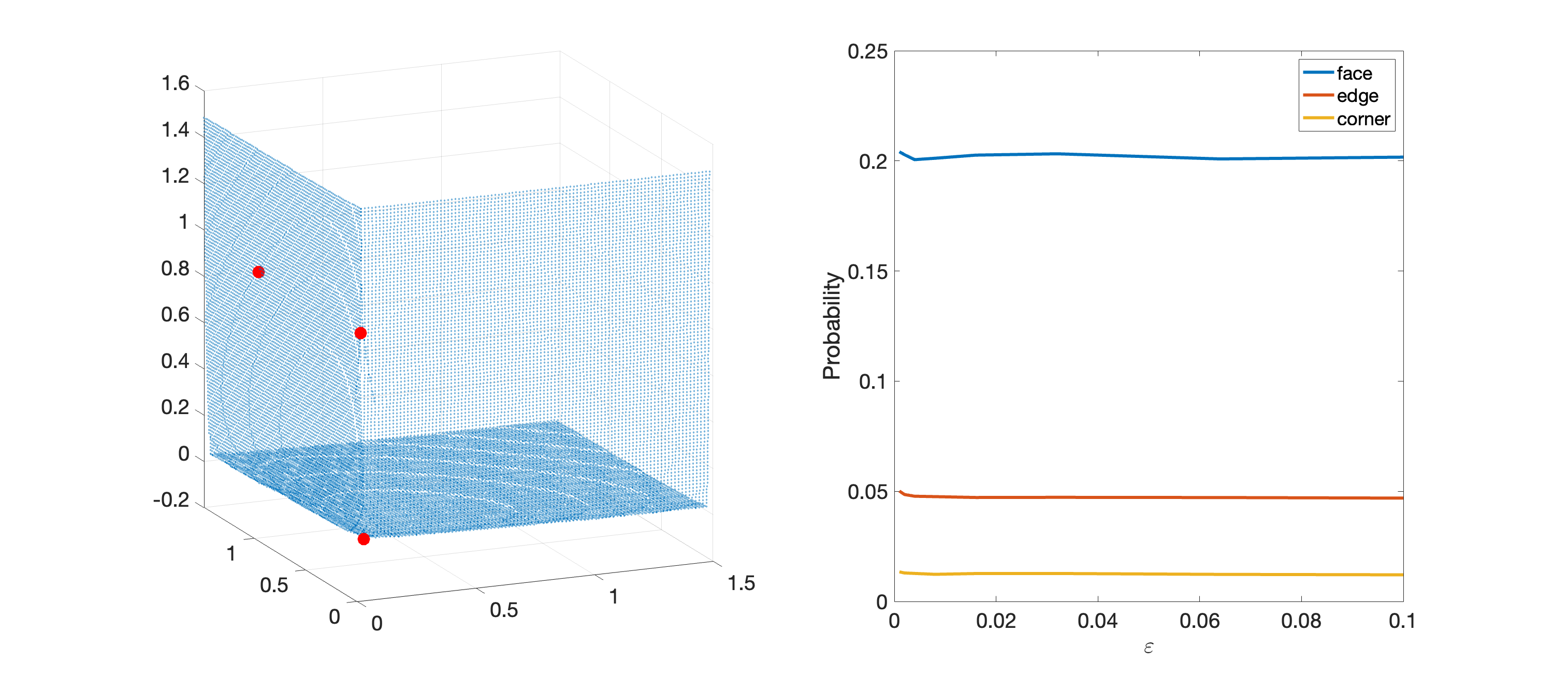}
\caption{Left: Approximate boundary of $\bs B_1$, with three initial values of $X_0$ marked in red.  Right: 
Probability that $(X_t,Y_t)$ remains in the $\delta$-interior of  $\bs B_1$ over the time interval $[h,T_0]=[0.05, -\log\var]$.}
\label{figIPSH2}
\end{figure}
			
\subsubsection{Numerical verification of  {\bf (H3)}} 
Assumption {\bf (H3)} is numerically verified by computing the overshoot time. The criterion for determining whether a trajectory  enters the basin $B_1$ is the same as that used in section \ref{sec:H1}.					
The noise magnitudes are set to  $0.5, 0.55,$ and $0.6$.  For each value of $\var$, the probability distribution of the overshoot time, given by $\xi_1 - \max \{ \kappa_{\bs X}, \kappa_{\bs Y}\}$, is estimated  using $1 \times 10^7$ samples.

As illustrated in Figure \ref{figH3}, the tail distribution of $\xi_1 - \max \{ \kappa_{\bs X}, \kappa_{\bs Y}\}$ exhibits a two-phase behavior.
The second phase corresponds to the scenario where one of the trajectories, ${X}_t$ or ${Y}_t$,  makes an excursion to other basins after entering  $B_1$ and subsequently returns, while the other trajectory remains within $B_1$.  
Due to the low probability of 
such an event, a large number of samples are required to capture the exponential tail. In Figure \ref{figH3}, the distributions of the  overshoot time and coupling time are compared. In the log-linear plot,  the slope of the overshoot time decreases rapidly  as the noise magnitude decreases, yet it  remains steeper than that of  the coupling time.  
As the theoretical result indicates that the tail of the coupling time distribution is close to the essential barrier height $H_U$, this numerical observation thereby verifies assumption {\bf (H3)}.

			\begin{figure}
				\centering
				\includegraphics[width = \linewidth]{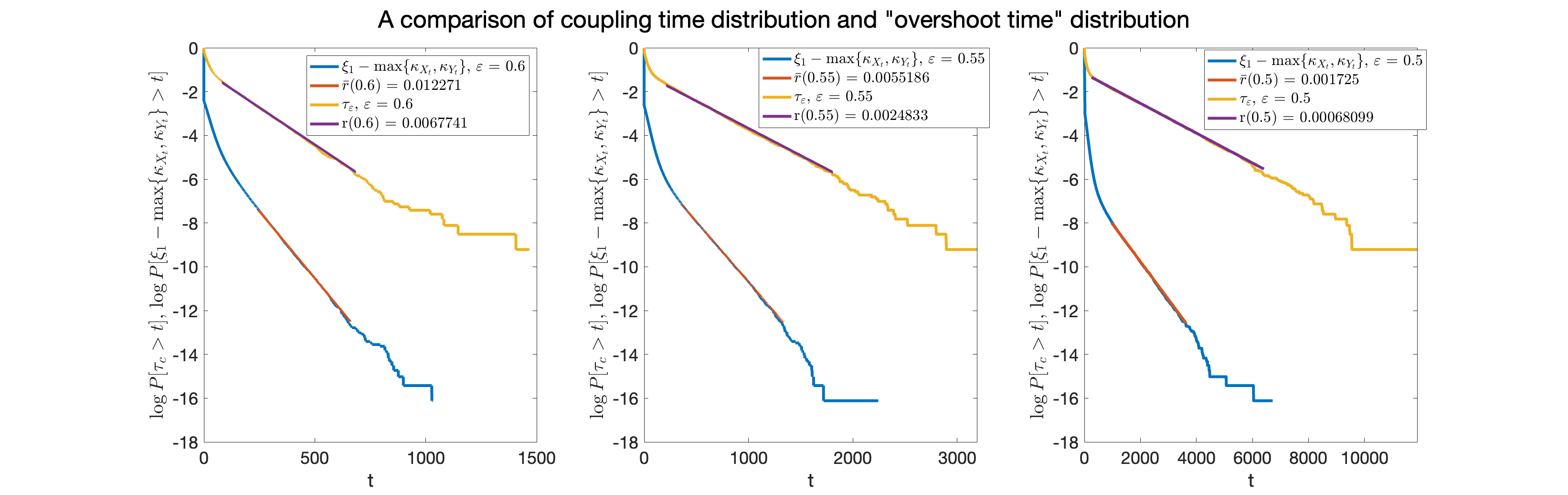}
				\caption{Comparison of probability distributions of the overshoot time $\xi_1 - \max \{ \kappa_{\bs X}, \kappa_{\bs Y}\}$ and the coupling time. The left, middle, and right panels correspond to $\varepsilon = 0.6, 0.55$, and $0.5$ respectively. The asymptotic slope of the overshoot time distribution in the log-linear plot is denoted by $\bar{r}$.}
				\label{figH3}
			\end{figure}		

\section{Conclusion and further discussions}	
This paper investigates  the relationship between the geometry of a multi-dimensional potential landscape and the distributions of coupling time for the overdamped Langevin system associated with the  potential. This study is motivated by the fact that the exponential tail of the coupling time  distribution provides a lower bound for the spectral gap of the Fokker-Planck operator governing the Langevin dynamics. It has long been believed that certain geometric properties of a region can  be inferred from the spectrum of an associated differential operator,  as famously illustrated by Kac's question   \cite{kac1966}, ``Can one hear the shape of a drum?"  
In a similar spirit, this work takes a preliminary step toward understanding the structure of a 
potential landscape by  establishing connections between its geometry and the statistical properties of coupling times.  

It is shown that, in the limit of vanishing noise, the exponential tails of the coupling time distributions exhibit {\it  qualitatively} distinct behaviors in the single-well potential and multi-well settings. 
Specifically, for a strongly convex single-well potential, the rate of exponential tail is uniformly bounded below by a constant that depends on the convexity of the potential. 
In contrast, for a multi-well potential, the rate of the exponential tail decays  exponentially as the noise strength tends to zero. These results are supported by both theoretical analysis  and numerical verification. 

The coupling scheme used in this paper combines  reflection coupling and maximal coupling  to improve efficiency.
It is observed that the upper bound on the tail distribution 
obtained through  this scheme is  close to  optimal, in the sense that it nearly achieves equality in the coupling inequality. 
To estimate the exponential decay rate of the tail in the small noise regime, a linear extrapolation is employed. 
This decay rate is governed by the essential barrier height, a concept introduced in this paper  to capture the global structural features of the  potential landscape.
In particular, the  essential barrier height is applied to analyze the loss landscape of artificial  neural networks, and the corresponding numerical observations are consistent with findings from related studies employing alternative methodologies.

 Although this work focuses on the distribution of coupling times, further information about the potential landscape 
 is expected to be extracted
 from the coupling-based analyses. 
For instance, the distribution of coupling locations may provide  additional insights into the geometry of the underlying landscape.
Furthermore, the present study  only concerns the tail of the coupling time distribution, which is associated with the principal eigenvalue of the Fokker-Planck operator. 
 An investigation of  conditional coupling times -- specifically,  
 those conditioned on avoiding  coupling in the deepest well --
 could  reveal spectral information associated with non-principal eigenvalues,  which correspond to the lower energy barriers.  
These  extensions represent promising directions for future research.

\appendix
\section{Proof of Lemma \ref{lem:expon_tail_multi_well}}\label{appendix_1}

To establish the exponential tail, more refined estimates of the eigenvalues of the Dirichlet operator are required. Specifically, let $\mathcal{L}_D^\var$ denote the infinitesimal generator of the process defined in \eqref{SDE1}, and let $D \subset\mathbb{R}^k$ be an open set with a regular boundary $\partial D$.
Based on potential theory, 
\cite{metastability2004, metastability2005} provide sharp estimates for the low-lying eigenvalues and the corresponding  eigenfunctions of 
the Dirichlet problem
\begin{align}
    \label{dirichlet}
    \mathcal{L}_D^\var u - \lambda u = 0,& \quad \mbox{ in } \quad D\\\nonumber
    u = 0,& \quad\  \mbox{in} \quad\  D^c \nonumber
\end{align}

The following Proposition \ref{prop:bovier_ev} summarizes the sharp bounds of the principal eigenvalues, as established in Proposition 3.2 of \cite{metastability2005} and Theorem 3.1 of \cite{metastability2004}. The corresponding bound on the principal eigenfunction is stated in Proposition \ref{prop:bovier_ef}, which follows from Proposition 3.3 in \cite{metastability2005}.
\begin{prop}\label{prop:bovier_ev}
\noindent{\bf(Sharp bound on eigenvalues \cite{metastability2004,metastability2005})}
Assume that $D \subseteq \mathbb{R}^d$ is open and let $U:D\to\R$ be a potential function satisfying {\bf (U3)}. Suppose that $D$ contains 
$L\geq 1$ local minima of $U$, and that there exists a unique minimum $x \in D$ such that
$$
U(z^*(x, D^c)) - U(x) = \max\nolimits_{1\le i\le L} \{ U(z^*(x_i, D^c)) - U(x_i)\}.
$$
Let $B\equiv B_x$ denote a neighborhood of $x$, and denote the first entrance time of $X_t$ into any subset $A \subseteq \mathbb{R}^d$ by $\tau_A$. Then there exist constants $\alpha > 0$, $C < \infty$, and $\delta > 0$, independent of $\var$, such that the principal eigenvalue $\lambda_1<0$  of $\mathcal{L}_D^\var$ satisfies
$$
\frac{\mathrm{cap}_B(D^c)}{\|h_{B, D^c}\|_2^2}(1 - C \var^\alpha)(1 - e^{-\delta/\var^2})\leq
|\lambda_1|\leq \frac{\mathrm{cap}_B(D^c)}{\|h_{B, D^c}\|_2^2}(1 + C \var^\alpha)(1 + e^{-\delta/\var^2}) \,,
$$
where $h_{B, D^c}(z) := \mathbb{P}_z[\tau_B < \tau_{D^c}]$, the norm $\|\cdot\|_2$ is taken with respect to the  invariant probability measure $\pi^\var$ of \eqref{SDE1}, and capacity
\[
\mathrm{cap}_B(D^c) = e^{-2U(z^*)/\var^2}\frac{(2 \pi \var)^d}{2 \pi} \frac{|\lambda_1^*(z^*)|}{\sqrt{|\mbox{det}(\nabla^2 U(z^*))|}}(1 + O(\var|\ln \var|))
\]
for $z^* = z^*(B, D^c)$. Here,  $\lambda^*_1(z^*)$ denotes the negative eigenvalue of the Hessian of $U$ at $z^*$.
\end{prop}

\begin{prop}\label{prop:bovier_ef}
\noindent{\bf(Sharp bounds on eigenfunctions \cite{metastability2004,metastability2005})}
Under the assumptions of Proposition \ref{prop:bovier_ev}, let $\phi_1$ be the eigenfunction of $\mathcal{L}^\var_D$ corresponding to $\lambda_1$, normalized such that $\inf_{x \in \partial B}\phi_1 = 1$. Then
\begin{eqnarray*}
h_{B, D^c}(z) \leq \phi_1(z) \leq h_{B, D^c}(z)(1 + C \var^\alpha)(1 + e^{-\delta/\var^2}).
\end{eqnarray*}
\end{prop}

\bigskip

These estimates on the principal eigenvalue and eigenfunction yield the exponential tail for the first hitting time in Lemma \ref{lem:expon_tail_multi_well}. 

\proof[Proof of Lemma \ref{lem:expon_tail_multi_well}]
The argument is a modification of Theorem 1.4 from \cite{metastability2005}. Assume $z \notin B_1$, since the bound is trivial otherwise. Set $D = B_1^c$ and $B = B_2$. Then
	\[
	\mathrm{cap}_B(D^c) = e^{-2U(z^*)/\var^2} \frac{(2\pi \var)^d}{2\pi} \frac{|\lambda_1^*(z^*)|}{\sqrt{|\det(\nabla^2 U(z^*))|}}(1 + O(\var|\ln \var|)),
	\]
	where $z^* = z^*(x_1, x_2)$. 

Note that for sufficiently small $\var > 0$, if $h_{B,D^c}(z) \simeq 1$ for $z \in B_i$, then it follows that $\Phi(x_i,x_2) < \Phi(x_i,x_1)$. This, in turn, implies $U(x_i) > U(x_2)$; otherwise, we would have
\[
\Phi(x_2,x_1) - U(x_2) < \Phi(x_i,x_1) - U(x_i),
\]
which contradicts \eqref{small_H_U}. In particular, $h_{B, D^c}(z) \simeq 1$ for $z \in B_2$. Therefore, integrating against $\pi^\var$, it follows that
\[
\|h_{B, D^c}\|_2^2 \simeq e^{-2U(x_2)/\var^2}.
\]
Noting that $H_U = U(z^*) - U(x_2)$, it follows from Proposition \ref{prop:bovier_ev} that
	\[
	|\lambda_1| = e^{-2 H_U / \var^2}(1 + O(\var^\alpha))(1 + O(e^{-\delta/\var^2})).
	\]
	Now observe that
	\[
	\mathbb{P}_z[\kappa_{\bs Z}(B_1) > t] = \left(e^{t \mathcal{L}_D^\var} \mathbf{1}_{B_1^c}\right)(z),
	\]
	so that
	\[
	\mathbb{P}_z[\kappa_{\bs Z}(B_1) > t] \le A_{z,\var} e^{-\lambda_1 t}
	\]
	for some constant $A_{z,\var}$ depending on $z$ and $\var$.

For the lower bound, note that $\mathcal{L}_D^\var$ is self-adjoint in the weighted space $L^2_{\pi^\var}$, and thus its eigenfunctions form an orthogonal basis. It follows that
	\[
	\mathbb{P}_z[\kappa_{\bs Z}(B_1) > t] = \left(e^{t \mathcal{L}_D^\var} \mathbf{1}_{B_1^c}\right)(z) 
	\ge \mathbb{E}_{\hat{\pi}^\var}[\phi_1 \mathbf{1}_{B_1^c}] \cdot e^{-\lambda_1 t} \cdot \phi_1(z),
	\]
	where $\hat{\pi}^\var$ denotes the normalized restriction of $\pi^\var$ to $D$. By Proposition \ref{prop:bovier_ef}, 
	\[
	\phi_1(z) \simeq h_{B, D^c}(z)(1 + O(\var^\alpha)).
	\]
Thus, for $z \in B$, one has $h_{B, D^c}(z) \simeq 1$, and hence
	\[
	\mathbb{P}_\mu[\kappa_{\bs Z}(B_1) > t] \ge \mathbb{E}_{\hat{\pi}^\var}[\phi_1 \mathbf{1}_{B_1^c}] \cdot e^{-\lambda_1 t} \cdot \E_\mu(\phi_1).
	\]
Since $\mu$ is fully supported, the term $\E_\mu(\phi_1)\simeq \E_\mu(h_{B, D^c}) > 0$, yielding the desired lower bound.
\qed

\begin{remark}
{\rm 
The leading-order term of \( A_{z, \var} \) is proportional to \( \phi_1(z) \). Furthermore, if \( z \) lies in the interior of \( B_2 \), then \( \mathbb{P}_z[\tau_{B_1} > \tau_{B_\var(x_2)}] \simeq 1 \). Consequently, the leading-order term of \(A_{z, \var}\) can be bounded by a constant that is independent of both \( \var \) and \( z \). Any dependence on \( \var \) and \( z\) arises exclusively through the coefficients associated with higher-order eigenfunctions in the spectral decomposition of the semigroup. On the time scale \( \kappa_{\bs Z}(B_1) = O(e^{2 H_U/\var^2}) \), these higher-order terms are of order \( O(e^{-\delta/\var^2}) \), making the prefactors \( A_{z, \var} \) and \( A_{\mu, \var} \) effectively independent of \( \var \).
}
\end{remark}

\begin{remark}\label{rem:hitting_time}
{\rm The result in Lemma \ref{lem:expon_tail_multi_well} still holds if \( B_1 \) is replaced by $\bs B_1$, as defined in \eqref{collection_B1}. This follows from \eqref{I_equivalent}, which ensures that, when \( D = \bs B_1^c \), the deepest local minimum remains to be \( x_2 \) and the height of the saddle \( z^*(B, D^c) \) remains to be \( U(z^*(x_1, x_2)) \). Hence, the proof of Lemma \ref{lem:expon_tail_multi_well} continues to hold. 
}
\end{remark}

\section{Proof of Proposition \ref{pfH2}}\label{appendix_2}
We first establish  that the event in which $X_{t}$ and $Y_{t}$ remain within the same basin and couple within a finite  time interval of order $\mathcal O(-\log \epsilon)$ occurs with a strictly positive probability.
This  directly implies {\bf (H2)}(i). 

By assumption (a), for any initial value $(X_0,Y_0)\in\overline{B_i\times B_i}$, the pair $(X_{T_{0}}, Y_{T_{0}})$ belongs to the $\delta$-interior $B^i_\delta\times B^i_\delta$ with probability
$\gamma_{0}$.  Since $B_{i}$ is the basin of attraction
of $x_{i}$, denote by
$\mathbf{x}_{t}$ the deterministic gradient flow $\dot{\mathbf{x}}_{t} = - \nabla U(\mathbf{x}_{t})$. Then there exists a constant   $T_{1}=\mathcal O(1)$ such that for any $\mathbf{x}_{0}
\in B^i_\delta$, 
the deterministic trajectory satisfies  $\mathbf{x}_{T_1} \in {B}^{i}_{c,1}\subset {B}^{i}_{c}$, where $B^i_{c,1}$ is an open subset in the interior of $B^i_c$.

By the standard small random perturbation argument (see, for instance, Chapter 4, Lemma 2.1 of \cite{freidlin1998random}), for any $\varepsilon > 0$ sufficiently small and any finite time interval, both processes $X_t$ and $ Y_t$ remain close to the deterministic trajectory $\mathbf{x}_t$ with high probability, say at least $0.9$. 
 Thus, combining this with assumption (a), define the event
\[
E_0 := \left\{ (X_t, Y_t) \in B_i \times B_i \ \text{for all } t \in [h, T_0 + T_1), \text{ and } (X_{T_0 + T_1}, Y_{T_0 + T_1}) \in B^i_{c,1} \times B^i_{c,1} \right\}.
\]
Then 
\[
\mathbb{P}[E_0 \mid (X_0, Y_0) \in \overline{B_i \times B_i}] \ge 0.9 \gamma_0.
\]

Let $\tilde{U}$ be a strongly convex potential satisfying \textbf{(U1)} such that $U = \tilde{U}$ on $B^i_c$. Denote by $(\tilde{X}_t, \tilde{Y}_t)$ the coupled process associated with $\tilde{U}$. Then $(X_t, Y_t)$ coincides with $(\tilde{X}_t, \tilde{Y}_t)$ as long as $(\tilde{X}_t, \tilde{Y}_t)$ remains in $B^i_c \times B^i_c$.
By Theorem~\ref{thm:1}, for any $\gamma_2 \in (0,1)$, there exists $T_2 = \mathcal{O}(1)$ such that
\[
\mathbb{P}[\tilde{X}_t \text{ and } \tilde{Y}_t \text{ couple before } T_2 \mid (\tilde{X}_0, \tilde{Y}_0) \in B^i_{c,1} \times B^i_{c,1}] \ge \gamma_2.
\]
Moreover, since $(\tilde{X}_t, \tilde{Y}_t)$ remains in $B^i_{c,1} \times B^i_{c,1}$ when $\varepsilon = 0$, the small random perturbation argument yields that for any $\var>0$ sufficiently small and any $\gamma_3 \in (0,1)$, there holds
\[
\mathbb{P}[(\tilde{X}_t, \tilde{Y}_t) \in B^i_c \times B^i_c \text{ for all } t \le T_2 \mid (\tilde{X}_0, \tilde{Y}_0) \in B^i_{c,1} \times B^i_{c,1}] \ge \gamma_3.
\]
Choose $\gamma_2$ and $\gamma_3$ such that $\gamma_2 + \gamma_3 > 1$, and define the events
\[
E_1 := \{ \tilde{X}_t \text{ and } \tilde{Y}_t \text{ couple before } T_2 \}, \quad E_2 := \{ (\tilde{X}_t, \tilde{Y}_t) \in B^i_c \times B^i_c \text{ for all } t \le T_2 \}.
\]
Then 
\begin{align*}
	\mathbb{P}[E_1 \cap E_2 \mid (\tilde{X}_0, \tilde{Y}_0) \in B^i_{c,1} \times B^i_{c,1}]
	&\ge \mathbb{P}[E_1 \mid (\tilde{X}_0, \tilde{Y}_0) \in B^i_{c,1} \times B^i_{c,1}] \\
	&\quad + \mathbb{P}[E_2 \mid (\tilde{X}_0, \tilde{Y}_0) \in B^i_{c,1} \times B^i_{c,1}] - 1 \\
	&\ge \gamma_2 + \gamma_3 - 1 > 0.
\end{align*}
Since $(X_t, Y_t)$ coincides with $(\tilde{X}_t, \tilde{Y}_t)$ on $E_1 \cap E_2$, it follows that
\[
\mathbb{P}[X_{T_0 + T_1 + t} = Y_{T_0 + T_1 + t} \text{ for some } t \in [0, T_2] \mid (X_{T_0 + T_1}, Y_{T_0 + T_1}) \in B^i_{c,1} \times B^i_{c,1}] \ge \gamma_2 + \gamma_3 - 1 > 0.
\]

Combining all estimates above, there exists a constant $T := T_0 + T_1 + T_2 = \mathcal{O}(-\log \varepsilon)$ such that
\begin{align}
	&\mathbb{P}[(X_t, Y_t) \in B_i \times B_i \ \text{for all } t \in (h, T],\ \text{and } X_t = Y_t \ \text{for some } t \in (h, T] \mid (X_0, Y_0) \in \overline{B_i \times B_i}]\nonumber\\
	&\ge \mathbb{P}[E_0 \mid (X_0, Y_0) \in \overline{B_i \times B_i}] \cdot \mathbb{P}[X_{T_0 + T_1 + t} = Y_{T_0 + T_1 + t} \text{ for some } t \in [0, T_2] \mid (X_{T_0 + T_1}, Y_{T_0 + T_1}) \in B^i_{c,1} \times B^i_{c,1}]\nonumber\\
	&\ge 0.9 \gamma_0 (\gamma_2 + \gamma_3 - 1) :=\gamma> 0.\label{P>ga}
\end{align}
This completes the verification of \textbf{(H2)}(i), where $\ga_1=1-\ga_0.$

\medskip

To prove {\bf (H2)}(ii), observe that $\tau_\var$ denotes the first time at which either $X_t$ and $Y_t$ couple or one of them exits the basin $B_i$. Hence, prior to time $\tau_\var$, the processes $X_t$ and $Y_t$ remain in the same basin and have not yet coupled. Consequently, for any $t \ge T$, it follows that
\[
\mathbb{P}[ \tau_\var > t \mid (X_{0}, Y_{0}) \in \overline{B_i \times B_i} ] 
\leq \prod_{n = 1}^{\lfloor t/T \rfloor} \mathbb{P}[ \tau_\var \circ \theta^{(n-1)T} > T \mid (X_{(n-1)T}, Y_{(n-1)T}) \in \overline{B_i \times B_i} ].
\]
Recalling that $T = \mathcal{O}(-\log \var)$ and applying \eqref{P>ga}, one obtains
\[
\mathbb{P}[ \tau_\var \circ \theta^{(n-1)T} > T \mid (X_{(n-1)T}, Y_{(n-1)T}) \in \overline{B_i \times B_i} ] \le \gamma_1,
\]
uniformly for all $(X_{(n-1)T}, Y_{(n-1)T}) \in \overline{B_i \times B_i}$. Therefore, for any $t > T$,
\[
\mathbb{P}[ \tau_\var > t \mid (X_{0}, Y_{0}) \in \overline{B_i \times B_i} ] \le \gamma_1^{\lfloor t/T \rfloor} \le e^{- r_0(\var) t},
\]
where $r_0(\var) = \mathcal{O}(T^{-1}) = \mathcal{O}(-1/\log\var)$.

For $t \in [0, T]$, since $\mathbb{P}[ \tau_\var > t \mid (X_{0}, Y_{0}) \in \overline{B_i \times B_i}] \le 1$, there exists a constant $C_0 = \mathcal{O}(1)$ independent of $t$ and $\var$ such that
\[
\mathbb{P}[ \tau_\var > t \mid (X_{0}, Y_{0}) \in \overline{B_i \times B_i} ] \le C_0 e^{- r_0(\var) t}.
\]
Thus, for all $t > 0$,
\[
\mathbb{P}[ \tau_\var > t \mid (X_{0}, Y_{0}) \in \overline{B_i \times B_i} ] \lesssim e^{- r_0(\var) t}.
\]

Finally, since the number of basins is finite, {\bf (H2)}(ii) follows by applying this estimate over all $L$ basins.

\section*{Acknowledgment}
The authors are grateful to the anonymous referees for their valuable comments, which significantly improved the manuscript. 

\bibliography{main}
\bibliographystyle{amsplain}		
\end{document}